\newtheorem{thm}{Theorem}
\newtheorem{prop}[thm]{Proposition}
\theoremstyle{definition}
\newtheorem{df}[thm]{Definition}
\newtheorem{coro}[thm]{Corollary}
\newtheorem{lem}[thm]{Lemma}
\theoremstyle{remark}
\newtheoremstyle{condition}{}{}{}{}{\bfseries}{.}{.5em}{#1 \thmnote{#3}}
\theoremstyle{condition}
\newtheorem*{cond}{Condition}
\newtheorem*{rep@theorem}{\rep@title}
\newcommand{\newreptheorem}[2]{%
\newenvironment{rep#1}[1]{%
 \def\rep@title{#2 \ref{##1}}%
 \begin{rep@theorem}}%
 {\end{rep@theorem}}}
\newcommand{\fonction}[5]{
 \begin{array}{rrcl}
	#1: & #2 & \longrightarrow & #3 \\
    & #4 & \longmapsto & #5 \end{array}
    }
\newcommand{\varfonction}[4]{
 \begin{array}{rcl}
	 #1 & \longrightarrow & #2 \\
     #3 & \longmapsto & #4 \end{array}
    }
\newcommand{\RR}{\mathbb{R}}
\newcommand{\NN}{\mathbb{N}}
\newcommand{\ZZ}{\mathbb{Z}}
\newcommand{\One}{\boldsymbol{1}}
\newcommand{\Par}{\RR^E \times \RR^B}
\newcommand{\ParS}{\left(\RR^E \times \RR^B\right) \backslash S}
\newcommand{\ParF}{\RR^{F_{\theta}} \times \RR^B}
\newcommand{\ParSX}{\left(\RR^E \times \RR^B\right) \backslash (S \cup \Delta_X)}
\newcommand{\D}{\diag}
\newcommand{\p}{p}
\newcommand{\thF}{\tau_{\theta}}
\newcommand{\lb}{\llbracket}
\newcommand{\rb}{\rrbracket}
\newcommand{\rsim}{\overset{R}{\sim}}
\renewcommand{\Im}{\operatorname{Im}}
\DeclareMathOperator{\Ker}{Ker}
\DeclareMathOperator{\diag}{Diag}
\DeclareMathOperator{\Id}{Id}
\DeclareMathOperator{\sgn}{sign}
\DeclareMathOperator{\rk}{rank}
\title{Local Identifiability of Deep ReLU Neural Networks: the Theory}
\author{Joachim Bona-Pellissier$^{ab}$$^{*}$, Fran\c{c}ois Malgouyres$^{b}$,  François Bachoc$^{b}$ \\
        \small Institut de Math\'ematiques de Toulouse ; UMR 5219\\
         Universit\'e de Toulouse ; CNRS \\
        \small $^{a}$  UT1, F-31042 Toulouse, France\\
       \small $^{b}$ UPS, F-31062 Toulouse Cedex 9, France\\\\\\
       \small $^{*}$Corresponding author: Joachim Bona-Pellissier; \tt{joachim.bona-pellissier@univ-toulouse.fr}
}
\begin{document}

\maketitle

\begin{abstract}Is a sample rich enough to determine, at least locally, the parameters of a neural network? To answer this question, we introduce a new local parameterization of a given deep ReLU neural network by fixing the values of some of its weights. This allows us to define local lifting operators whose inverses are charts of a smooth manifold of a high dimensional space. The function implemented by the deep ReLU neural network composes the local lifting with a linear operator which depends on the sample. We derive from this convenient representation a geometric necessary and sufficient condition of local identifiability. Looking at tangent spaces, the geometric condition provides: 1/ a sharp and testable necessary condition of identifiability and 2/ a sharp and testable sufficient condition of local identifiability. The validity of the conditions can be tested numerically using backpropagation and matrix rank computations. 
\end{abstract}

\section{Introduction}

\subsection{Context and motivations}\label{Context and motivations-sec}

Neural networks are famous for their capacity to perform complex tasks in a wide variety of domains such as image classification \cite{krizhevsky2012imagenet}, object recognition \cite{redmon2016you,ren2015faster}, speech recognition \cite{6296526,sak2014long,hannun2014deep}, natural language processing \cite{DBLP:journals/corr/abs-1301-3781,mikolov2010recurrent,kalchbrenner2013recurrent}, anomaly detection \cite{pinto2021deep} or climate sciences \cite{adewoyin2021tru}. 

The following properties of the parameters of neural networks have recently drawn attention: identifiability, inverse stability and stable recovery; from weaker to stronger. Let $f_{\theta}(X)$ be the outputs of a network parameterized by the parameters $\theta$, for given inputs $X$. Global identifiability means that if $f_{\theta}(X) = f_{\tilde{\theta}}(X)$ then $\theta = \tilde{\theta}$, up to identified invariances, for instance neuron permutation and rescaling for ReLU networks. Local identifiability restricts this analysis for $\theta$ and $\tilde{\theta}$ sufficiently close. Then, inverse stability means that the distance between $\theta$ and $\tilde{\theta}$ (up to invariances) is bounded by a function of the distance between $f_{\theta}(X)$ and $f_{\tilde{\theta}}(X)$. Finally, stable recovery consists in obtaining an algorithm to approximately recover $\theta$ from a noisy version of $f_{\theta}(X)$, with quantitative guarantees. In all cases, we must distinguish between statements for $X$ being a finite list of inputs, in which case we would like $X$ to be small, and for infinite $X$ (for instance determining $\theta$ from the entire function $f_{\theta}$).

Identifiability from finite $X$, which is the focus of this paper, is important for different reasons. In the first place, model extraction attacks for neural networks have been a growing topic over the last years. Indeed, some algorithms are able to recover in practice the parameters of a neural network from queries \cite{carlini2020cryptanalytic, pmlr-v119-rolnick20a}. This can be a concern since neural network providers may wish to keep these parameters secret, for security \cite{kurakin2016adversarial}, for privacy \cite{fredrikson2015model, carlini2019secret}, or for intellectual property \cite{zhang2018protecting}.

A way of preventing such a recovery can be by guaranteeing that identifiability does not hold, that is to check that a necessary condition of identifiability is not met. On the opposite side, guaranteeing that identifiability holds is interesting in the position of an attacker. If the attacker has access to $X$, to $f_\theta(X)$, and is able to compute a $\tilde{\theta}$ such that $f_{\tilde{\theta}}(X) = f_\theta(X)$, the question then becomes: does this guarantee that $\tilde{\theta} = \theta$ or shall the attacker expand $X$ with new queries? The attacker needs a sufficient condition of identifiability.

Another important motivation for identifiability is having a better understanding and control of neural networks. Indeed, if the learning sample has the form $(X, f_\theta(X))$, with $\theta$ the parameters of a teaching network, global identifiability from $X$ means that the global minimizer of the empirical risk is unique. In this case, if the global minimizer is reached, there will typically be no variability due to the optimization parameters (choice of the algorithm, number of epochs,...) and to stochasticity (for stochastic optimizers). Even if very recent works on double descent phenomena, e.g. \cite{belkin2020two}, highlight a benefit of overparameterization (thus absence of identifiability) for increasing prediction performances, a user may be interested in a small enough number of parameters to retain identifiability, if the loss of performance is mild compared to overparameterization.

Note that, of course, global identifiability is more relevant than local identifiability to the above motivations. This work nevertheless focuses on local identifiability, which is a necessary condition for global identifiability, and which analysis can be a first step to analyzing global identifiability. Local identifiability is also arguably insightful on the geometry of the relationship between the parameter space of $\theta$ and its image $\{f_{\theta}(X), \ \theta \text{ varies}\}$.
Note that most existing identifiability, inverse stability and stable recovery results (see the next section) are also local.

\subsection{Existing work on identifiability, inverse stability, stable recovery and attacks}
\paragraph{Identifiability:}

Even though it has regained interest recently, the question of identifiability for neural networks is not new. Indeed, in the 1990s, some positive results of identifiability for networks with smooth activation functions ($\mathrm{tanh}$, logistic sigmoid or Gaussian for instance) have been established \cite{sussmann1992uniqueness, albertini1993uniqueness, kuurkova1994functionally, kainen1994uniqueness, fefferman1994reconstructing}. These results are mainly theoretical, they concern activation functions which are not the most used nowadays (in particular, they do not apply to ReLU networks), and assume full knowledge of the function $f_\theta$ implemented by the network, which is impossible in practice.

When it comes to ReLU, for shallow \cite{petzka2020symmetries, stock:tel-03208517} as well as deep \cite{phuong2020functional, bona2021parameter} neural networks, some positive results of identifiability have been recently established. They show that under some conditions on the architecture and parameters of the network, the function implemented by the network uniquely characterizes its parameters, up to neuron permutation and rescaling operations. Although they apply to ReLU networks, these results share a limitation with those of previous paragraph: they assume the function implemented by the network to be known on the whole input space, or at least on an open subset of it.

As far as we know, there exists only one identifiability result for deep ReLU networks assuming the knowledge of $f_\theta$ on a \emph{finite} sample only. \citet{stock:hal-03292203} give a theoretical condition for the existence of a finite set which locally identifies the parameters of a deep neural network. It is an \emph{existence} result: it does not concretely provide such a finite set, nor does it allow to test local identifiability for any finite sample, as we propose in this work. The construction in \cite{stock:hal-03292203} shares similarities with previous works on deep structured matrix factorization \cite{MalgouyresLandsbergITW, MalgouyresLandsberg_long, malgouyres2020stable}. The present article also lies in this line of research. 

\paragraph{Inverse stability and stable recovery:}

Closely related to identifiability are the topics of inverse stability and stable recovery of the parameters of a network. Some negative \cite{petersen2020topological} as well as positive \cite{berner2019degenerate, MalgouyresLandsbergITW, MalgouyresLandsberg_long, malgouyres2020stable} results of inverse stability exist. The articles \cite{MalgouyresLandsbergITW, MalgouyresLandsberg_long, malgouyres2020stable} examine the case of structured networks with the identity as activation function. Only \cite{malgouyres2020stable} considers a finite $X$.
The authors of \cite{berner2019degenerate} consider a general class of networks amongst which ReLU networks, but the result only holds for one-hidden-layer neural networks. Furthermore this result also requires the knowledge of $f_\theta$ on a whole domain.

Several stable recovery algorithms have also been proposed, for one-hidden-layer neural networks in a first place, for smooth activation function \cite{fu2020guaranteed}, as well as ReLU in the fully-connected case \cite{ge2018learning, zhang2019learning, zhong2017recovery, zhou2021local} or in the convolutional case \cite{brutzkus2017globally, zhang2020improved}. These references consider a finite $X$ but provide a large sample complexity under which a smartly constructed initialization followed by a first order algorithm allows to stably recover the parameters of the network.

For deep networks, some stable recovery algorithms also exist, for instance for Heavyside activation function \cite{arora2014provable}, or for only recovering the first layer with sparsity assumptions \cite{sedghi2014provable} in the ReLU case, but to the best of our knowledge there does not exist any algorithm recovering fully a deep ReLU network from a finite sample.

\paragraph{Model inversion attacks:} For deep ReLU networks, when one has full access to the function implemented by the network, a practical algorithm \cite{pmlr-v119-rolnick20a} sequentially constructs a sample $X$ and approximately recovers the architecture and the parameters modulo permutation and rescaling. Similarly, formulating the problem as a cryptanalytic problem, \cite{carlini2020cryptanalytic} reconstructs a functionally equivalent network with fewer requests. As mentioned in Section \ref{Context and motivations-sec}, these two references are related to identifiability, but consider a different setting. In this article we consider an arbitrary given $X$, while they work mostly on its construction.

\subsection{Contributions}

\paragraph{1/} We establish a necessary and sufficient geometric condition of local identifiability from a finite sample $X$ for deep fully-connected ReLU networks. The condition is that the intersection between an affine space and a smooth manifold is reduced to a single point. See Figure \ref{geometric condition} for an illustration. 

\paragraph{2/} Considering tangent spaces, we then provide a computable necessary condition of local identifiability from a finite sample X. Since global identifiability implies local identifiability, it is also a computable necessary condition of global identifiability. 

\paragraph{3/} We also establish a computable sufficient condition of local identifiability, which is close to the necessary condition. To the best of our knowledge, these are the first testable conditions of local identifiability for any finite input sample. In particular, \cite{stock:hal-03292203} provides a theoretical condition equivalent to the existence of a finite sample for which local identifiability holds, but does not provide the sample explicitly, nor does it characterize local identifiability for any arbitrary sample.

\begin{figure}
    \centering
    \begin{tabular}{lr}
    \includegraphics[scale=0.8]{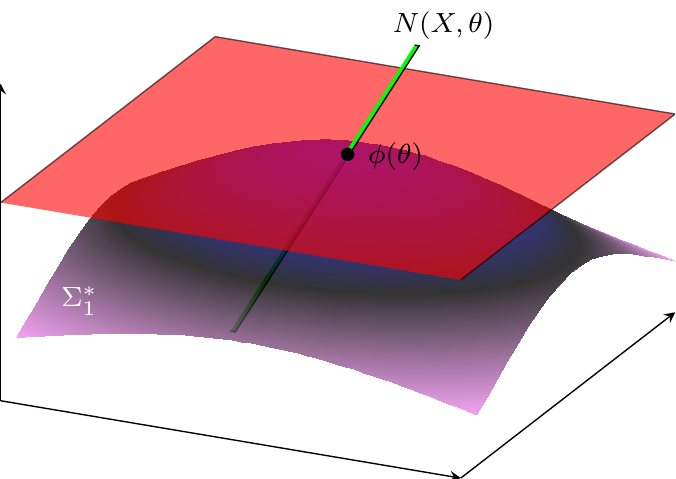} \hspace{20pt}
    &
    \includegraphics[scale=0.8]{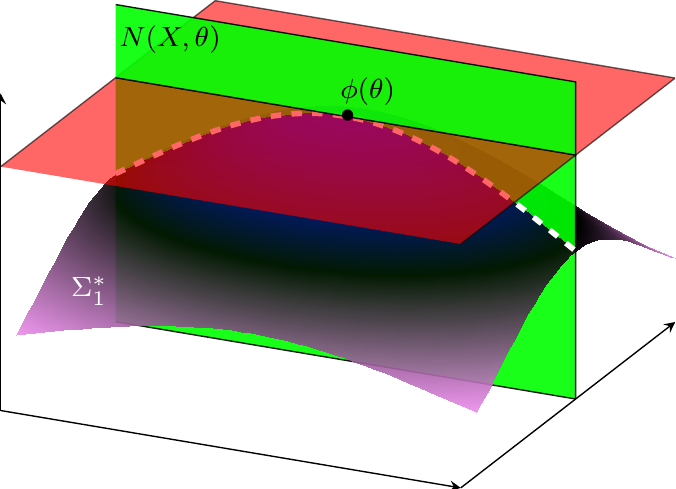}
    \end{tabular}
    \caption{The local intersection between the affine space $N(X,\theta)$ (in green) and the smooth manifold $\Sigma_1^*$ (color gradient). We also represent in red the tangent space to $\Sigma_1^*$ at $\phi(\theta)$. Left: The identifiable case. The intersection is reduced to $\{\phi(\theta)\}$. Right: The non identifiable case. The intersection, represented with a dashed white line, is not reduced to $\{\phi(\theta)\}$.}
    \label{geometric condition}
\end{figure}

\paragraph{4/} To prove these results, we develop geometric tools which can be of independent interest for theoretically understanding deep ReLU networks as well as for possible applications. Namely, we introduce local reparameterizations $\rho_\theta$ of the network by fixing some weight values as constants. Building on these local parameterizations, we introduce local lifting operators $\psi^\theta$ and we decompose the function implemented by the network $f_\theta(x)$ as a composition of $\psi^\theta$, which only depends on the parameters, and a piecewise constant operator $\alpha$ which depends on $\theta$ and the inputs $x^i$. For almost any parameterization $\theta$, the operator $\alpha$ is constant in a neighborhood of $\theta$ and consists in applying a linear function to $\psi^\theta$. We show that in fact, the operators $\psi^\theta$ are the inverses of coordinate charts of a smooth manifold $\Sigma_1^*$, contained in a high dimensional space. We find $\Sigma_1^*$ to be of particular interest in representing geometrically some properties of the network parameters (in particular to establish 1/, 2/ and 3/ above).

\subsection{Overview of the article}

This work is structured as follows. We start by introducing basic tools and already known results, and we state the definition of local identifiability in Section \ref{ReLU networks, lifting operator and rescaling of the parameters-sec-main}. We then introduce the local parameterizations $\rho_\theta$ and the set $\Sigma_1^*$, and we show that the latter is a smooth manifold in Section \ref{The smooth manifold Sigma_1^*-sec-main}. This allows us to state our main results in Section \ref{main results-sec-main}, that is the geometric and the numerically testable conditions of local identifiability. Finally we discuss in Section \ref{Computations-sec-main} the numerical computations needed to test the latter conditions. All the proofs are provided in the appendices.

\section{ReLU networks, lifting operator and rescaling of the parameters}\label{ReLU networks, lifting operator and rescaling of the parameters-sec-main}


\subsection{ReLU networks}\label{ReLU networks-sec-main}

Let us introduce our notations for deep fully-connected ReLU networks. In this paper, a network is a graph $(E,V)$ of the following form. 
\begin{itemize}
\item $V$ is a set of neurons, which is divided in $L+1$ layers, with $L \geq 2$: $V = (V_l)_{l \in \lb 0, L \rb}$.

$V_0$ is the input layer, $V_L$ the output layer and the layers $V_l$ with $1 \leq l \leq L-1$ are the hidden layers. Using the notation $|C|$ for the cardinal of a finite set $C$, we denote, for all $l \in \lb 0, L \rb$, $N_l = |V_l |$ the size of the layer $V_l$.
\item $E$ is the set of all oriented edges $v \rightarrow v'$ between neurons in consecutive layers, that is
\[E = \{ v \rightarrow v' , \ v \in V_l, v' \in V_{l+1}, \text{for } l \in \lb 0 , L-1 \rb \}.\]
\end{itemize}
A network is parameterized by weights and biases, gathered in its parameterization $\theta$, with
\[\theta = \left( (w_{v \rightarrow v'})_{v \rightarrow v' \in E}, (b_v)_{v \in B}\right) \quad \in \Par ,\]
where $B = \bigcup_{l=1}^L V_l$. It is also convenient to consider the weights and biases in matrix/vector form: for a given $\theta$, we denote, for $l \in \lb 1, L \rb$, 
\[W_l = (w_{v \rightarrow v'})_{v' \in V_l, v \in V_{l-1}} \in \RR^{N_l \times N_{l-1}} \qquad \text{and} \qquad b_l = (b_v)_{v \in V_l} \in \RR^{N_l}.\]
When dealing with two parameterizations $\theta$ and $\tilde{\theta} \in \Par$, we take as a convention that $w_{v \rightarrow v'}$ and $b_v$ as well as $W_l$ and $b_l$ denote the weights and biases associated to $\theta$, and $\tilde{w}_{v \rightarrow v'}$ and $\tilde{b}_v$ as well as $\widetilde{W}_l$ and $\tilde{b}_l$ denote those associated to $\tilde{\theta}$.

The activation function, denoted $\sigma$, is always ReLU: for any $p \in \NN^*$ and any vector \mbox{$x = (x_1, \dots , x_p)^T \in \mathbb{R}^p$}, it is defined as $\sigma(x) = (\max(x_1,0), \dots , \max(x_p,0))^T$.

For a given $\theta$, we define recursively $f_l : \RR^{V_0} \rightarrow \RR^{V_l}$ (we omit the dependency in $\theta$ in the notation for simplicity), for $l \in \lb 0, L \rb$, by
\begin{itemize}
\item $ \forall x \in \RR^{V_0}$, \qquad $f_0(x) = x$ ;
\item $\forall l \in \lb 1, L-1 \rb$, $\forall x \in \RR^{V_{0}}$,
\qquad $f_l(x) = \sigma \left(W_l f_{l-1}(x) + b_l \right)$;
\item $\forall x \in \RR^{V_{0}}$, \qquad $f_L(x) = W_L f_{L-1}(x) + b_L$ .
\end{itemize}

We define the function $f_\theta : \RR^{V_0} \rightarrow \RR^{V_L}$ implemented by the network of parameter $\theta$ as $f_\theta = f_L$.

\subsection{The lifting operator \texorpdfstring{$\phi$}{} and the activation operator \texorpdfstring{$\alpha$}{} }\label{the lifting operator-sec-main}

For a fixed $x \in \mathbb{R}^{V_{0}}$, the value of $f_{\theta}(x)$ is a non-linear function of $\theta$. The goal of this section is to obtain a higher-dimensional representation of $\theta$, that will be written $\phi(\theta)$, and such that $f_{\theta}(x)$ is locally a linear function of $\phi(\theta)$. This will be achieved with Proposition \ref{fundamental prop of alpha-main}. The function $\phi$ is called a lifting operator, a wording borrowed from category theory and commonly used in compressed sensing and dictionary learning, for instance in \cite{candes2015phase}. The components of $\phi(\theta)$ will be associated to paths in the neural network. Linearity in Proposition \ref{fundamental prop of alpha-main} will correspond to summing over these paths.

We now introduce the paths notations.
 For all $l \in \lb 0, L-1 \rb$, we define
\[\mathcal{P}_l = V_l \times \dots \times V_{L-1},\]
which is the set of all paths in the network starting from layer $l$ and ending in layer $L-1$. We consider an additional element $\beta$ which can be interpreted as an empty path and whose role will be clear once $\phi$ has been defined and Proposition \ref{fundamental prop of alpha-main} stated. We define
\[\mathcal{P} = \left( \bigcup_{l=0}^{L-1} \mathcal{P}_l \right) \cup \{ \beta \}.\]

In a similar way to \cite{stock:hal-03292203}, we can now define the above-mentioned `lifting operator'
\begin{equation}
    \fonction{\phi}{\RR^E \times \RR^B}{\RR^{\mathcal{P}\times V_L}}{\theta}{(\phi_{p,v}(\theta))_{p \in \mathcal{P}, v \in V_L}}
\end{equation}
by:
\begin{itemize}
\item for all $l \in \llbracket 0, L-1 \rrbracket$ and all $p =(v_l, \dots, v_{L-1}) \in \mathcal{P}_l$, and for all $v_L \in V_L$,
\[\phi_{p,v_L}(\theta) = \begin{cases}\prod_{l'=0}^{L-1} w_{v_{l'} \rightarrow v_{l'+1}} & \text{if } l=0 \\ 
b_{v_l} \prod_{l'=l}^{L-1} w_{v_{l'}  \rightarrow v_{l'+1}}& \text{if } l \geq 1;\end{cases}\]
\item for $p = \beta$ and $v_L \in V_L$, \quad $\phi_{\beta,v_L}(\theta) = b_{v_L}$.
\end{itemize}

To define the activation operator, we first define, for all $l \in \lb 1 , L-1 \rb$, all $v \in V_l$, all $\theta \in \Par$ and $x \in \RR^{V_0}$,
\[a_v(x,\theta) = \begin{cases} 1 & \text{if } (W_{l} f_{l-1}(x)+b_l)_v \geq 0 \\ 0 & \text{otherwise,}\end{cases}\]
which is the activation indicator of neuron $v$. We then define the `activation operator'
\begin{equation}\label{definition of alpha}
    \fonction{\alpha}{\RR^{V_0} \times \left( \RR^E \times \RR^B \right)}{\RR^{1 \times \mathcal{P}}}{(x, \theta)}{(\alpha_p(x, \theta))_{p \in \mathcal{P}}}
\end{equation}
by:
\begin{itemize}
\item for all $l \in \llbracket 0, L-1 \rrbracket$ and all $p =(v_l, \dots, v_{L-1}) \in \mathcal{P}_l$:
\[\alpha_{p}(x, \theta) = \begin{cases} x_{v_0} \prod_{l'=1}^{L-1} a_{v_{l'}}(x, \theta) & \text{if } l=0 \\
\prod_{l'=l}^{L-1} a_{v_{l'}}(x, \theta) & \text{if } l \geq 1; \end{cases}\]
\item for $p = \beta$, \quad $\alpha_{\beta}(x, \theta) = 1$.
\end{itemize}

We then have the announced linear representation of the function $f_\theta$ implemented by the network.

\begin{prop}\label{fundamental prop of alpha-main}
For all $\theta \in \RR^E \times \RR^B$ and all $x \in \RR^{V_0}$, \quad $f_{\theta}(x)^T = \alpha(x, \theta) \phi(\theta)$.
\end{prop}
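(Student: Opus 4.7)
My plan is to verify the identity coordinate-by-coordinate: for each output neuron $v_L \in V_L$, I want to show
\[
f_\theta(x)_{v_L} = \sum_{p \in \mathcal{P}} \alpha_p(x,\theta)\, \phi_{p,v_L}(\theta).
\]
The right-hand side splits naturally, according to the partition $\mathcal{P} = \bigl(\bigcup_{l=0}^{L-1}\mathcal{P}_l\bigr) \cup \{\beta\}$, into a sum of four types of contributions: one indexed by full paths from an input neuron $v_0$ to $v_L$, one for each $l \in \lb 1, L-1 \rb$ indexed by paths starting at some intermediate $v_l$ (with a bias $b_{v_l}$ as prefactor), and one constant $b_{v_L}$ coming from $\beta$. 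The strategy is to unroll the recursive definition of $f_\theta$ and match each resulting monomial to exactly one such contribution.

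The key observation is that ReLU satisfies $\sigma(y)_v = a_v(x,\theta)\, y_v$ when $y = W_l f_{l-1}(x)+b_l$ and $v \in V_l$, directly from the definitions of $\sigma$ and $a_v$. I would then prove by induction on $l \in \lb 1, L-1 \rb$ that for every $v_l \in V_l$,
\[
f_l(x)_{v_l} = a_{v_l}(x,\theta)\Bigl[\, \sum_{(v_0,\dots,v_{l-1})} x_{v_0}\!\prod_{l'=1}^{l-1} a_{v_{l'}}(x,\theta)\!\prod_{l'=0}^{l-1} w_{v_{l'}\to v_{l'+1}} + \sum_{k=1}^{l-1} \sum_{(v_k,\dots,v_{l-1})} b_{v_k}\!\prod_{l'=k+1}^{l-1} a_{v_{l'}}(x,\theta)\!\prod_{l'=k}^{l-1} w_{v_{l'}\to v_{l'+1}} + b_{v_l}\Bigr],
\]
the sums running over tuples ending at $v_{l-1}$ with the convention that an empty product equals $1$. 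The base case $l=1$ is $f_1(x)_{v_1} = a_{v_1}(x,\theta)\bigl(\sum_{v_0} w_{v_0\to v_1}x_{v_0}+b_{v_1}\bigr)$, which is immediate. For the inductive step, use the recursion together with the ReLU identity above, distribute the new activation $a_{v_l}$, and re-index to absorb the new edge $w_{v_{l-1}\to v_l}$ into each product; the new bias $b_{v_l}$ provides exactly the missing $k=l$ term (which then becomes part of the next step's sum) together with the trailing $b_{v_l}$.

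Finally, I apply the formula at $l = L-1$ and plug into $f_\theta(x)_{v_L} = \sum_{v_{L-1}} w_{v_{L-1}\to v_L} f_{L-1}(x)_{v_{L-1}} + b_{v_L}$. After distributing, the $x_{v_0}$-terms are in bijection with $\mathcal{P}_0$ and match $\alpha_p(x,\theta)\phi_{p,v_L}(\theta)$ for $p = (v_0,\dots,v_{L-1})$; the $b_{v_k}$-terms with $1 \leq k \leq L-1$ are in bijection with $\mathcal{P}_k$ and match the corresponding $\alpha_p \phi_{p,v_L}$; and the residual $b_{v_L}$ matches $\alpha_\beta \phi_{\beta,v_L} = 1 \cdot b_{v_L}$.

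The whole argument is a careful unrolling of the forward pass; there is no conceptual difficulty beyond the bookkeeping. The main obstacle I expect is making sure the activation products and weight products are paired correctly across the inductive step—in particular, that the $a_{v_l}$ produced at layer $l$ ends up multiplying every path passing through $v_l$ exactly once and no other term, and that the bias $b_{v_l}$ is correctly split between a new contribution (which later becomes part of a longer path) and the isolated term $b_{v_L}$ absorbed by the empty path $\beta$.
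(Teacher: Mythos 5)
Your route is essentially the paper's: unroll the forward pass by induction and match each resulting monomial to a path, the only cosmetic difference being that the paper inducts on the depth $L$ (treating the first $L-1$ layers as a smaller network and its pre-activation output), while you induct on the layer index $l$ inside a fixed network; these are the same bookkeeping organized differently.

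However, the inductive formula you display for $f_l(x)_{v_l}$ is wrong as written, and it is precisely the pairing issue you yourself flag: in the bias contributions the activation product must start at $k$, not $k+1$, i.e.\ the $k$-th term inside the bracket should be $b_{v_k}\prod_{l'=k}^{l-1} a_{v_{l'}}(x,\theta)\prod_{l'=k}^{l-1} w_{v_{l'}\to v_{l'+1}}$. The bias $b_{v_k}$ enters the pre-activation of layer $k$, so it is multiplied by $a_{v_k}$ as soon as the ReLU of layer $k$ is applied. Already at $l=2$ the direct computation gives
\[
f_2(x)_{v_2} \;=\; a_{v_2}(x,\theta)\Bigl[\sum_{v_0,v_1} x_{v_0} w_{v_0\to v_1} a_{v_1}(x,\theta) w_{v_1\to v_2} \;+\; \sum_{v_1} b_{v_1}\, a_{v_1}(x,\theta)\, w_{v_1\to v_2} \;+\; b_{v_2}\Bigr],
\]
whereas your formula produces $\sum_{v_1} b_{v_1} w_{v_1\to v_2}$ for the middle term. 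With the formula as stated the inductive step does not close: passing from layer $l-1$ to layer $l$, the trailing $b_{v_{l-1}}$ of the induction hypothesis generates $w_{v_{l-1}\to v_l}\, a_{v_{l-1}}(x,\theta)\, b_{v_{l-1}}$ (the factor $a_{v_{l-1}}$ sits outside the bracket at step $l-1$), which carries $a_{v_{l-1}}$, while your claimed $k=l-1$ term omits it. The final identification would fail for the same reason, since for $p=(v_k,\dots,v_{L-1})\in\mathcal{P}_k$ with $k\geq 1$ the paper's $\alpha_p(x,\theta)=\prod_{l'=k}^{L-1} a_{v_{l'}}(x,\theta)$ does include $a_{v_k}$. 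Once the index is corrected (activation product from $l'=k$ onward), your induction goes through and the argument coincides with the paper's proof.
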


This result, which is proven in Appendix \ref{the lifting operator-sec}, is for instance also stated in \citep[Sec.~4]{stock:hal-03292203} with slightly different notations. Note that each component of the vector $f_{\theta}(x)$ above is written as a sum over a (very large) number of paths.

Let us reformulate Proposition \ref{fundamental prop of alpha-main} with several inputs. We consider, for some $ n \in \NN^*$, some given inputs $x^i \in \RR^{V_0}$, with $i \in \lb 1 , n \rb$. We denote by $X \in \RR^{n \times V_0}$ the matrix whose lines are the transpose $(x^i)^T$ of the inputs. For all $ \theta \in \Par$, we denote by $f_\theta(X) \in \RR^{n \times V_L}$ the matrix whose lines are the transpose $f_\theta(x^i)^T$ of the corresponding outputs. We also denote by $\alpha(X, \theta) \in \RR^{n \times \mathcal{P}}$ the matrix whose lines are the line vectors $\alpha(x^i, \theta)$. Using Proposition \ref{fundamental prop of alpha-main} for all the $x^i$, we have the relation
\begin{equation}\label{fundamental prop of alpha-matrix version-main}
f_\theta(X) = \alpha(X, \theta) \phi(\theta).
\end{equation} 

We prove in Appendix \ref{the lifting operator-sec} the next proposition, which states that $\theta \mapsto \alpha(X, \theta)$ is piecewise constant.
\begin{prop}\label{alpha_X is piecewise-constant-main}
For all $n \in \NN^*$, for all $X \in \RR^{n \times V_0}$, the mapping
\[\fonction{\alpha_X}{\Par}{\RR^{n \times \mathcal{P}}}{\theta}{\alpha(X, \theta)}\]
is piecewise-constant, with a finite number of pieces.
Furthermore, the boundary of each piece has Lebesgue measure zero. We call $\Delta_X$ the union of all these boundaries. The set $\Delta_X \subset \Par$ is closed and has Lebesgue measure zero.
\end{prop}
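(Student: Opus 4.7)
The plan is to separate the two aspects of the claim. First, $\alpha(X,\theta)$ is a function of the activation bits $(a_v(x^i,\theta))_{i,l,v} \in \{0,1\}^{n\sum_{l=1}^{L-1}N_l}$ and therefore takes at most finitely many values, so $\alpha_X$ is automatically piecewise constant with a finite number of pieces. The substantial part is to prove that the discontinuity set $\Delta_X$ is closed and has Lebesgue measure zero. For each $i \in \lb 1,n \rb$, $l \in \lb 1, L-1 \rb$, and $v \in V_l$, set $g_{i,l,v}(\theta) = (W_l f_{l-1}(x^i) + b_l)_v$, which is continuous in $\theta$ since $f_{l-1}(x^i)$ is a composition of affine maps and ReLUs. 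Let $\tilde Z_{i,l,v} = \overline{\{g_{i,l,v}>0\}} \cap \overline{\{g_{i,l,v}<0\}}$, the set of $\theta$ at which $g_{i,l,v}$ changes sign in every neighborhood. A short neighborhood argument shows that $\Delta_X$ equals the finite union $\bigcup_{i,l,v}\tilde Z_{i,l,v}$, from which closedness of $\Delta_X$ follows immediately.

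The measure-zero claim is then proved by induction on $l$, showing that each $\tilde Z_{i,l,v}$ is Lebesgue-negligible. The base case $l=1$ is immediate because $g_{i,1,v}$ is affine in $\theta$ with nonzero coefficient on $b_v$, so $\{g_{i,1,v}=0\}$ is a hyperplane. For the inductive step, consider the open conull set $U_l = \Par \setminus \bigcup_{l'<l,\, i,\, v'}\tilde Z_{i,l',v'}$. By construction, no preactivation at layers $1,\dots,l-1$ changes sign locally on $U_l$, so the activation pattern there is locally constant, hence constant on each connected component $C$ of $U_l$. On such a $C$, the hidden-layer ReLUs collapse to multiplication by fixed $0/1$ diagonal matrices, and $g_{i,l,v}$ restricts to a genuine polynomial in $\theta$.

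The induction closes via a dichotomy on each component $C$: either that polynomial vanishes identically on $C$, in which case $g_{i,l,v}$ never changes sign in $C$ and $\tilde Z_{i,l,v}\cap C = \emptyset$; or it is a nonzero polynomial whose zero set in $C$ has Lebesgue measure zero. Since an open subset of $\Par$ has at most countably many connected components, summing gives $|\tilde Z_{i,l,v}\cap U_l|=0$, and combined with $|\Par\setminus U_l|=0$ from the induction hypothesis this yields $|\tilde Z_{i,l,v}|=0$. The main obstacle, and the reason I phrase things via $\tilde Z_{i,l,v}$ rather than via $\{g_{i,l,v}=0\}$, is precisely the degenerate case where the polynomial vanishes on an entire open component: such vanishing does not actually subdivide the component, so it must not contribute to the boundary, and the topological definition of $\tilde Z_{i,l,v}$ correctly filters it out while the cruder set $\{g_{i,l,v}=0\}$ would not.
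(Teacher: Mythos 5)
Your overall architecture (finitely many activation patterns, then an induction over layers in which, away from the critical sets of the earlier layers, the preactivation coincides with a polynomial whose zero set is Lebesgue-null) is essentially the paper's proof, with connected components of an open set playing the role of the paper's ``pieces''. But there is a genuine gap in the bridge between your sign-change sets $\tilde Z_{i,l,v}$ and the objects the proposition is about. Since $a_v(x^i,\theta)=1$ exactly when $g_{i,l,v}(\theta)\ge 0$, the absence of a local sign change of $g_{i,l,v}$ does \emph{not} imply that $a_v(x^i,\cdot)$ is locally constant: a zero of $g_{i,l,v}$ at which the function stays nonpositive flips $a_v$ from $1$ to $0$ in every neighborhood while lying outside $\tilde Z_{i,l,v}$. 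As a model, take $g(\theta)=\max(\theta_1-1,0)+\min(\theta_1+1,0)$: here $\tilde Z=\emptyset$, yet the indicator of $\{g\ge 0\}$ is not locally constant and $\sigma(g)$ is not a polynomial on the (single) component. Consequently both your claimed inclusion $\Delta_X\subset\bigcup_{i,l,v}\tilde Z_{i,l,v}$ and your inductive step (``no local sign change on $U_l$, hence the activation pattern is constant on each component $C$, hence $g_{i,l,v}$ restricted to $C$ is a polynomial'') are unjustified as purely topological statements, and the induction would genuinely fail for a preactivation behaving like the model $g$ above. They become correct only via the structural observation that the paper's proof makes explicitly: $g_{i,l,v}$ has partial derivative identically equal to $1$ with respect to the bias $b_v$, which does not enter $f_{l-1}$, so every zero of $g_{i,l,v}$ is a two-sided sign change and in fact $\tilde Z_{i,l,v}=\{g_{i,l,v}=0\}$. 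With that one line added your induction goes through; note also that it makes the ``identically vanishing polynomial'' branch of your dichotomy vacuous, so the advertised advantage of $\tilde Z_{i,l,v}$ over $\{g_{i,l,v}=0\}$ disappears.

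Separately, the asserted \emph{equality} $\Delta_X=\bigcup_{i,l,v}\tilde Z_{i,l,v}$ is false in general: the reverse inclusion fails because a flip of $a_v(x^i,\cdot)$ can be invisible to $\alpha(X,\cdot)$. For instance, if every neuron of some later layer is strictly inactive at $x^i$ throughout a neighborhood, then all entries of $\alpha(x^i,\cdot)$ involving $a_v$ vanish identically there, so a point of $\tilde Z_{i,l,v}$ need not lie on the boundary of any piece of $\alpha_X$. This does not hurt the result: only the inclusion $\Delta_X\subset\bigcup_{i,l,v}\tilde Z_{i,l,v}$ is needed for the measure-zero claim, and closedness of $\Delta_X$ is immediate in any case, since it is a finite union of boundaries, which are closed. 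But the equality should not be claimed, and the inclusion you actually need must be argued through the bias observation above, which is precisely the crux of the paper's own proof.
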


As discussed before, for a given $X \in \RR^{n \times V_0}$, when studying the function $\theta \mapsto f_\theta(X)$, Proposition \ref{alpha_X is piecewise-constant-main} alongside \eqref{fundamental prop of alpha-matrix version-main} shows that on a piece over which $\alpha_X$ is constant, $f_\theta(X)$ depends linearly on $\phi(\theta)$. Since $\Delta_X$ is closed with measure zero, for almost all $\tilde{\theta} \in \Par$, there exists a neighborhood of $\tilde{\theta}$ over which $\alpha_X$ is constant. As noted for instance by \citet[Sec.~2]{stock:hal-03292203}, for any $\theta$ in such a neighborhood, we thus have
\begin{equation}\label{f_theta(X) - f_(tilde(theta))(X) = alpha(X,theta)(phi(theta) - phi(tilde(theta))}
    f_{\theta}(X) - f_{\tilde{\theta}}(X) = \alpha(X, \tilde{\theta}) \left( \phi(\theta) - \phi(\tilde{\theta})\right). 
\end{equation}
Hence, studying $\phi$ will allow us to understand better how $f_\theta(X)$ locally depends on $\theta$.

\subsection{Invariant rescaling operations on \texorpdfstring{$\theta$}{}}\label{Rescaling-sec-main}

Some well-known rescaling operations on the parameters $\theta$ do not affect the value of $\phi(\theta)$. Before detailing them, let us define, for all $t \in \RR$, the sign indicator $\sgn(t)$ as $1, 0$ or $-1$ depending on whether $t>0$, $t=0$ or $t< 0$ respectively. For any $\theta \in \Par$, we then define
\[\sgn(\theta) = \Big( (\sgn(w_{v \rightarrow v'})_{v \rightarrow v' \in E}, (\sgn(b_v))_{v \in B}\Big) \in \{-1, 0, 1 \}^E \times \{-1, 0, 1 \}^B.\]
We can now describe the rescaling operations.
\begin{df} \label{def:rescaling}
Let $\theta \in \Par$ and $\tilde{\theta} \in \Par$.
\begin{itemize}
\item We say that $\theta$ is equivalent to $\tilde{\theta}$ modulo rescaling, and we write $\theta \rsim \tilde{\theta}$ iff there exists a family of vectors \mbox{$(\lambda^0, \dots, \lambda^L) \in (\RR^*)^{V_0} \times \dots \times (\RR^*)^{V_L}$}, with $\lambda^0 = \One_{V_0}$ and $\lambda^L = \One_{V_L} $, such that, for all $l \in \lb 1, L \rb$,
\begin{equation}\label{equivalence def-1-main}
\begin{cases}W_l = \D(\lambda^l) \widetilde{W}_l \D(\lambda^{l-1})^{-1} \\
b_l = \D(\lambda^l) \tilde{b}_l.\end{cases}
\end{equation}

\item We say that $\theta$ is equivalent to $\tilde{\theta}$ modulo positive rescaling, and we write $\theta \sim \tilde{\theta}$ iff
\[\theta \overset{R}{\sim} \tilde{\theta} \quad \text{and} \quad \sgn(\theta) = \sgn(\tilde{\theta}).\]
\end{itemize}
\end{df}

For all $l \in \lb 1, L \rb$, to satisfy \eqref{equivalence def-1-main} is equivalent to satisfy, for all $(v_{l-1},v_l) \in V_{l-1} \times V_l$, 
\begin{equation}\label{equivalence def-2-main}
\begin{cases}
w_{v_{l-1} \rightarrow v_{l}} = \frac{\lambda^l_{v_l}}{\lambda^{l-1}_{v_{l-1}}} \tilde{w}_{v_{l-1} \rightarrow v_l} \\
b_{v_l} = \lambda^l_{v_l} \tilde{b}_{v_l}.
\end{cases}
\end{equation}

The relations $\rsim$ and $\sim$ are equivalence relations on the set of parameters $\Par$. The equivalence modulo positive rescaling $\sim$ is a well-known invariant for ReLU networks \cite{stock:tel-03208517, stock:hal-03292203, bona2021parameter, neyshabur2015path, yi2019positively}. We have indeed the following property: if $\theta \sim \tilde{\theta}$, for all $x \in \RR^{V_0}$, 
\begin{equation} \label{eq:rescaling:f:invariant}
    f_\theta(x) = f_{\tilde{\theta}}(x).
\end{equation}

One of the interests of the operator $\phi$ is that it captures this invariant, as described by \citet[Sec. 2.4]{stock:hal-03292203}. Propositions \ref{equivalence implies equal phi-main} and \ref{equal phi implies resc equivalence-main} are similar to their results and are restated here and proven in Appendix \ref{the lifting operator-sec} for completeness. Indeed, combining the definition of $\phi$ with \eqref{equivalence def-2-main}, we have the following property.

\begin{prop}\label{equivalence implies equal phi-main}
For all $\theta, \tilde{\theta} \in \Par$, we have
\[\theta \overset{R}{\sim} \tilde{\theta} \quad \Longrightarrow \quad \phi(\theta) = \phi( \tilde{\theta}),\]
and thus in particular
\[\theta \sim \tilde{\theta} \quad \Longrightarrow \quad \phi(\theta) = \phi(\tilde{\theta}).\]
\end{prop}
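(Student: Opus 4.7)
The plan is to verify the equality $\phi(\theta) = \phi(\tilde{\theta})$ component by component, directly using the definition of $\phi$ together with the rescaling identities \eqref{equivalence def-2-main}. Since the second implication follows immediately from the first (positive rescaling is a special case of rescaling), I would focus entirely on showing $\theta \rsim \tilde{\theta} \Rightarrow \phi(\theta) = \phi(\tilde{\theta})$.

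Assume $\theta \rsim \tilde{\theta}$ and fix the family $(\lambda^0,\dots,\lambda^L)$ from Definition \ref{def:rescaling}, with $\lambda^0 = \One_{V_0}$ and $\lambda^L = \One_{V_L}$. I would split the verification by the three cases appearing in the definition of $\phi$. For a path $p = (v_l,\dots,v_{L-1}) \in \mathcal{P}_l$ with $l \geq 1$ and any $v_L \in V_L$, substitute \eqref{equivalence def-2-main} into $\phi_{p,v_L}(\theta) = b_{v_l}\prod_{l'=l}^{L-1} w_{v_{l'}\rightarrow v_{l'+1}}$ to obtain
\[
\phi_{p,v_L}(\theta) \;=\; \lambda^l_{v_l}\,\tilde{b}_{v_l} \prod_{l'=l}^{L-1} \frac{\lambda^{l'+1}_{v_{l'+1}}}{\lambda^{l'}_{v_{l'}}}\,\tilde{w}_{v_{l'}\rightarrow v_{l'+1}}.
\]
The key observation is that the product of ratios telescopes to $\lambda^L_{v_L}/\lambda^l_{v_l}$, so the $\lambda^l_{v_l}$ factor cancels and only $\lambda^L_{v_L}$ remains, which equals $1$ by the boundary condition. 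This yields exactly $\tilde{b}_{v_l}\prod_{l'=l}^{L-1}\tilde{w}_{v_{l'}\rightarrow v_{l'+1}} = \phi_{p,v_L}(\tilde{\theta})$. For $l=0$, the same telescoping argument applied to $\prod_{l'=0}^{L-1} w_{v_{l'}\rightarrow v_{l'+1}}$ produces the factor $\lambda^L_{v_L}/\lambda^0_{v_0}$, which equals $1$ thanks to both boundary conditions. For the empty path case $p=\beta$, the identity $\phi_{\beta,v_L}(\theta)=b_{v_L}=\lambda^L_{v_L}\tilde{b}_{v_L}=\tilde{b}_{v_L}=\phi_{\beta,v_L}(\tilde{\theta})$ is immediate from $\lambda^L = \One_{V_L}$.

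There is no substantial obstacle here; the proof is essentially a bookkeeping exercise. The only point that deserves care is ensuring the telescoping is written so that the roles of the boundary conditions $\lambda^0 = \One_{V_0}$ and $\lambda^L = \One_{V_L}$ are both visible, since they are precisely what make the rescaling invariant on $\phi$ rather than just rescaling $\phi$ by some global factor. The second implication $\theta \sim \tilde{\theta} \Rightarrow \phi(\theta)=\phi(\tilde{\theta})$ is then a one-line consequence of the first, because $\sim$ is defined as $\rsim$ together with the sign-equality condition.
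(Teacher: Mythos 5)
Your proposal is correct and follows essentially the same route as the paper's proof: a case-by-case substitution of the rescaling identities \eqref{equivalence def-2-main} into the definition of $\phi$, with the telescoping product of $\lambda$-ratios and the boundary conditions $\lambda^0 = \One_{V_0}$, $\lambda^L = \One_{V_L}$ giving the cancellation, and the second implication deduced from the first since $\sim$ implies $\rsim$. No gap to report.
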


The reciprocal of Proposition \ref{equivalence implies equal phi-main} holds provided we exclude some degenerate cases. Let us denote, for any $l \in \lb 1, L-1 \rb$ and any $v \in V_l$, by $w_{\bullet \rightarrow v}$ the vector $(w_{v' \rightarrow v})_{v' \in V_{l-1}} \in \RR^{V_{l-1}}$ and by $w_{v \rightarrow \bullet}$ the vector $(w_{v \rightarrow v'})_{v' \in V_{l+1}} \in \RR^{V_{l+1}}$.  We define the following set, which is close to the notion of `non admissible parameter' in \cite{stock:hal-03292203}:
\[S = \{ \theta \in \Par, \exists v \in V_1 \cup \dots \cup V_{L-1}, \ w_{v \rightarrow \bullet} = 0 ~~ \text{ or } ~~ (w_{\bullet \rightarrow v}, b_v) = (0,0) \}.\]
When $w_{v \rightarrow \bullet} = 0$, all the outward weights of $v$ are zero. When $(w_{\bullet \rightarrow v}, b_v) = (0,0)$, all the inward weights as well as the bias of $v$ are zero, so for any input the information flowing through neuron $v$ is always zero. In both cases, the neuron $v$ does not contribute to the output and could be removed from the network without changing the function $f_\theta$.
Since the set $S$ is a finite union of linear subspaces of codimension larger than 1, it is closed and has Lebesgue measure zero. We can thus exclude the degenerate cases in $S$ without loss of generality. Proposition \ref{equal phi implies resc equivalence-main} states that the reciprocal of Proposition \ref{equivalence implies equal phi-main} holds over $\ParS$.
\begin{prop}\label{equal phi implies resc equivalence-main}
For all $\theta \in \ParS $, for all $\tilde{\theta} \in \Par$, 
\[\phi(\theta) = \phi( \tilde{\theta}) \quad \Longrightarrow \quad \theta \overset{R}{\sim} \tilde{\theta}.\]
\end{prop}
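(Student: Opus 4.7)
The proof constructs, from $\phi(\theta) = \phi(\tilde\theta)$, rescaling vectors $(\lambda^0,\ldots,\lambda^L) \in (\RR^*)^{V_0} \times \cdots \times (\RR^*)^{V_L}$ with $\lambda^0 = \One_{V_0}$ and $\lambda^L = \One_{V_L}$ satisfying \eqref{equivalence def-2-main}. The $\beta$-entries $\phi_{\beta, v_L}(\theta) = b_{v_L}$ yield $b_{v_L} = \tilde b_{v_L}$ at once, consistent with $\lambda^L = \One_{V_L}$; the core of the proof is to define $\lambda^l_v$ for each hidden $v$ and then verify the edge and bias relations of \eqref{equivalence def-2-main}.

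\textbf{Two reachability lemmas.} Since $\theta \in \ParS$, at every hidden $v$ one has $w_{v \to \bullet} \neq 0$ and $(w_{\bullet \to v}, b_v) \neq (0,0)$. A backward induction on $l$, using only $w_{v \to \bullet} \neq 0$, yields for each hidden $v \in V_l$ a \emph{forward path} $\pi_v = (v, v_{l+1}, \ldots, v_{L-1}) \in \mathcal{P}_l$ together with some $v_L \in V_L$ such that the weight product $D_v = w_{v \to v_{l+1}} \cdots w_{v_{L-1} \to v_L}$ is nonzero. A symmetric forward induction, using $(w_{\bullet \to v}, b_v) \neq (0,0)$, yields a \emph{backward path} $(u_{l'}, u_{l'+1}, \ldots, u_l = v)$ of nonzero weight product $U$ whose starting node satisfies either $l' = 0$ or $b_{u_{l'}} \neq 0$.

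\textbf{Defining $\lambda^l_v$.} Concatenating the backward and forward paths at $v$ produces an element of $\mathcal{P}_{l'}$ whose $\phi(\theta)$-entry paired with $v_L$ equals $s \cdot U \cdot D_v$, with start factor $s = 1$ if $l' = 0$ and $s = b_{u_{l'}}$ otherwise; this quantity is nonzero. By $\phi(\theta) = \phi(\tilde\theta)$ the corresponding $\tilde\theta$-entry is also nonzero, forcing the $\tilde\theta$-analog $\tilde D_v$ of $D_v$ to be nonzero. One then sets $\lambda^l_v := \tilde D_v/D_v \in \RR^*$. Independence of the choice of $\pi_v$ follows from applying a common backward extension to two forward paths and dividing the two resulting $\phi$-equations, which gives $\tilde D_v/D_v = \tilde D_v'/D_v'$.

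\textbf{Verification and main obstacle.} The bias identity $b_v = \lambda^l_v \tilde b_v$ at hidden $v$ is immediate from $\phi_{\pi_v, v_L}(\theta) = b_v D_v = \tilde b_v \tilde D_v$. The edge identity $w_{v' \to v} = (\lambda^l_v/\lambda^{l-1}_{v'}) \tilde w_{v' \to v}$ is proved by induction on the target layer $l \in \lb 1, L \rb$: for $l = 1$, the $\mathcal{P}_0$-path $(v', v, v_{l+1}, \ldots, v_{L-1})$ yields $w_{v' \to v} D_v = \tilde w_{v' \to v} \tilde D_v$ and the identity follows at once; for $l \geq 2$, one prepends a backward path to $v'$ and extends through the edge $v' \to v$ (and through $\pi_v$ when $l < L$), so the resulting $\phi$-equation contains $w_{v' \to v}$ as an isolated factor. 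The bias identity at the start node of this backward path together with the edge identities at all target layers strictly below $l$ (available by induction) let the upstream factors telescope on both sides, and dividing by the common nonzero quantities yields the desired identity. The main technical obstacle is exactly the subcase $l \geq 2$ with $b_{v'} = 0$: the naive $\mathcal{P}_{l-1}$-equation $b_{v'} w_{v' \to v} D_v = \tilde b_{v'} \tilde w_{v' \to v} \tilde D_v$ collapses to $0 = 0$, so one genuinely has to propagate further upstream via the backward-reachability lemma — this is precisely where both reachability lemmas and the layerwise induction become jointly necessary.
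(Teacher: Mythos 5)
Your proof is correct, and its skeleton coincides with the paper's: the same two reachability facts (they are exactly Proposition \ref{nonzero input and output paths}), the same definition of $\lambda^l_v$ as the ratio $\tilde D_v/D_v$ of outgoing-path weight products (the paper's $\tilde{\theta}_{p_o(v)}/\theta_{p_o(v)}$), and the same concatenation-of-paths exploitation of $\phi(\theta)=\phi(\tilde\theta)$. Where you genuinely diverge is in how the edge relations of \eqref{equivalence def-2-main} are verified. The paper first proves the single multiplicative identity \eqref{theta_pi = lambda_vl tilde(theta)_pi}, namely $\theta_{p_i}=\lambda^l_{v_l}\tilde{\theta}_{p_i}$ for \emph{every} incoming path $p_i$ ending at $v_l$, with no non-vanishing hypothesis on $p_i$: it is obtained by dividing only by $\theta_{p_o(v_l)}\neq 0$. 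The obstacle you rightly single out (the $\mathcal{P}_{l-1}$ equation degenerating to $0=0$ when $b_{v'}=0$) then disappears, since one simply writes this identity at level $l$ for $p_i\cup(v_l)$ and at level $l-1$ for one nonzero incoming path $p_i$, and the edge relation drops out with no induction on layers and no telescoping. Your route instead re-derives, inside an induction on the target layer, what amounts to this identity along one explicit backward path, by telescoping the previously established edge and bias relations; this does work — the divisors you need, such as $\tilde b_{u_{l'}}\tilde U$, are forced to be nonzero because the corresponding $\phi$-equation has nonzero left-hand side — but it is longer, and the well-definedness check for $\lambda^l_v$ and the layerwise induction are both avoidable. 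In short, both arguments are valid; the paper's universal identity is the cleaner key lemma, while yours buys the same conclusion through a more hands-on upstream-propagation argument.
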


\subsection{Local identifiability}

We have now introduced all the concepts used in the formal definition of `local identifiability'.

\begin{df} \label{def:local:identif}
Let $X \in \RR^{n \times V_0}$ and $\theta \in \RR^E \times \RR^B$. We say that $\theta$ is \emph{locally identifiable from $X$} if there exists $\epsilon > 0$ such that for all $\tilde{\theta} \in \Par $,  if $\|\theta - \tilde{\theta}\|_{\infty} < \epsilon$,
\[
f_{\theta}(X) = f_{\tilde{\theta}}(X) \quad \Longrightarrow \quad \theta \sim \tilde{\theta}.
\]
\end{df}

\section{The smooth manifold \texorpdfstring{$\Sigma_1^*$}{}}\label{The smooth manifold Sigma_1^*-sec-main}

We explained in the previous section that studying $\phi$ allows to better understand how the output $f_\theta(X)$ locally depends on $\theta$. The image of $\phi$ is of particular interest in this study and is the subject of this section. We define
\[\Sigma_1^* = \{ \phi(\theta), \ \theta \in \ParS \}.\]
The main result of this section, Theorem \ref{Sigma_1^* is a smooth manifold-main}, states that $\Sigma_1^*$ is a smooth manifold. This result is a key element of the article. Indeed, it allows to consider tangent spaces to $\Sigma_1^*$, and by doing so, to linearize the geometric characterization of Theorem \ref{local identifiability using Sigma1*-main} illustrated in Figure \ref{geometric condition}. Instead of considering the intersection between a smooth manifold and an affine space as in Theorem \ref{local identifiability using Sigma1*-main}, this indeed allows to consider the intersection between two affine spaces, which can be characterized with rank computations as in Theorems \ref{Necessary condition-main} and \ref{Sufficient condition-main}.

To show this result, we need local injectivity. In this aim, let us consider a fixed $\theta$ and analyze the functions $u \mapsto f_{\theta+u}(X)$ and $u \mapsto \phi(\theta+u)$ for $u$ around $0$. We can select $N_1+\dots+N_{L-1}$ scalar scaling parameters (each in a neighborhood of $1$), and use them to ``rescale'' $\theta+u$ as in Definition \ref{def:rescaling}, leaving $f_{\theta+u}(X)$ and $\phi(\theta+u)$ unchanged (\eqref{eq:rescaling:f:invariant} and Proposition \ref{equivalence implies equal phi-main}). Locally, at first order, this means that there are 
$N_1+\dots+N_{L-1}$ linear combinations of $u$ which leave $f_{\theta+u}(X)$ and $\phi(\theta + u)$ invariant. In order to obtain injectivity with respect to $u$, locally around $0$, we will fix $N_1+\dots+N_{L-1}$ components of $u$ as follows.

\begin{figure}
\centering
\begin{tabular}{lr}
\includegraphics[scale=1]{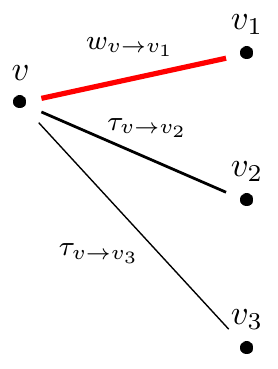}
&
\hspace{35pt}
\includegraphics[scale=1]{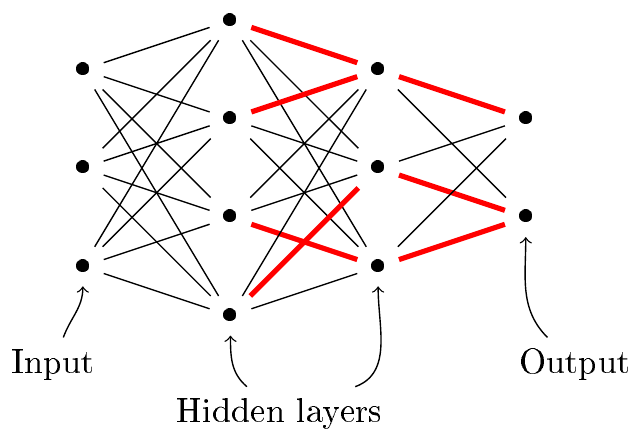}
\end{tabular}
\caption{Left: The outward edges of a hidden neuron $v$ and their weights. In this example, $v_1 = s^\theta_{\max}(v)$, so the weight of the edge in red, $v \rightarrow v_1$, has its value fixed as $w_{v \rightarrow v_1}$. The weights of the remaining edges, $\tau_{v \rightarrow v_2}$ and $\tau_{v \rightarrow v_3}$, are free to vary. Right: In red, all the edges whose weights are fixed. The remaining edges, in black, constitute the set $F_\theta$.}\label{test}
\end{figure}

For each neuron $v$ in a hidden layer, we choose the outward edge $v \rightarrow v'$ whose weight $w_{v \rightarrow v'}$ has largest (absolute) value (if there are several such edges, we choose one arbitrarily). We denote by $s^\theta_{\max}(v)$ such a neuron $v'$. For each neuron $v$ in a hidden layer $V_l$, there is exactly one neuron $s^\theta_{\max}(v)$ in the layer $V_{l+1}$, and one corresponding edge $v \rightarrow s^\theta_{\max}(v)$. See Figure \ref{test} for an illustration. We will set to $0$ the components of $u$ corresponding to all the edges of the form $v \rightarrow s^\theta_{\max}(v)$. Intuitively, it will not limit the set of functions $f_{\tilde\theta}$, in the vicinity of $f_{\theta}$; but will permit to obtain a one-to-one correspondence between $u$ and $f_{\theta+u}$.

More precisely, let us denote by $F_\theta \subset E$ the set of remaining edges, which is formally defined as\footnote{Note, in the definition of $F_\theta$, the index $l$ starting at $l=1$ and not $l=0$.}
\begin{equation}\label{definition of F_theta-main}
F_\theta = E \ \backslash \ \left( \bigcup_{l=1}^{L-1} \Big\{ (v, s^\theta_{\max}(v)), v \in V_l \Big\} \right).
\end{equation}

The mapping from the space of restricted parameters $\ParF$ to the parameter space $\Par$ locally around $\theta$ is simply given by
 the following application
\begin{equation}\label{definition of rho_theta-main}
\fonction{\rho_{\theta}}{\ParF}{\Par}{\tau}{\tilde{\theta} \quad \text{such that }\begin{cases}\forall (v, v') \in F_{\theta}, \quad \tilde{w}_{v \rightarrow v'} = \tau_{v \rightarrow v'} \\ \forall (v,v') \in E \backslash F_{\theta}, \quad \tilde{w}_{v \rightarrow v'} = w_{v \rightarrow v'} \\ \forall v \in B, \quad \tilde{b}_{v} = \tau_v. \end{cases}}
\end{equation}
In particular, if we define $\thF \in \ParF$ by $(\thF)_{v \rightarrow v'} = w_{v \rightarrow v'}$ and $(\thF)_v = b_v$, we have $\rho_\theta(\thF) = \theta$.
The function $\rho_{\theta}$ is affine and injective. We define 
\begin{equation}\label{definition of U_theta-main}
U_{\theta} = \rho_{\theta}^{-1}\left(\ParS\right),
\end{equation}
which is an open set of $\ParF$. We define, for all $\theta \in \ParS$, the local lifting operator
\begin{equation}\label{definition psi^theta-main}
\fonction{\psi^{\theta}}{U_{\theta}}{\RR^{\mathcal{P}\times V_L}}{\tau}{\phi \circ \rho_{\theta}(\tau).}
\end{equation}

One can show that $\psi^\theta$ is $C^\infty$ and that it is a homeomorphism from $U_\theta$ onto its image (see the proofs in Appendix \ref{The smooth manifold structure of Sigma_1^*-sec}), which we denote $V_\theta$ and is thus an open subset of $\Sigma_1^*$ (with the topology induced on $\Sigma_1^*$ by the standard topology on $\RR^{\mathcal{P}\times V_L}$). In particular, since $\rho_\theta (\thF) = \theta$, we have $\phi(\theta) = \psi^\theta(\thF) \in V_\theta$. We have the following fundamental result that will allow us to consider and make use the tangent spaces of $\Sigma_1^*$.

\begin{thm}\label{Sigma_1^* is a smooth manifold-main}
$\Sigma_1^*$ is a smooth manifold of $\RR^{\mathcal{P} \times V_L}$ of dimension 
\[|F_{\theta}| + |B| = N_0 N_1 + N_1 N_2 + \dots + N_{L-1}N_L + N_L,\]
and the family $(V_\theta, (\psi^\theta)^{-1})_{\theta \in \ParS}$ is an atlas.
\end{thm}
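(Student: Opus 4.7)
The plan is to show that each $\psi^\theta$ is a smooth injective immersion and a topological embedding whose images $V_\theta$ cover $\Sigma_1^*$, from which the embedded submanifold structure and the smoothness of the transition maps will follow from standard differential geometry. Covering is immediate since $\rho_\theta(\thF) = \theta$, so $\phi(\theta) = \psi^\theta(\thF) \in V_\theta$ for every $\theta \in \ParS$. The dimension count is direct: $|F_\theta| = |E| - \sum_{l=1}^{L-1} N_l$, $|B| = \sum_{l=1}^{L} N_l$, so $|F_\theta| + |B| = |E| + N_L = \sum_{l=0}^{L-1} N_l N_{l+1} + N_L$.

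Each $\psi^\theta = \phi \circ \rho_\theta$ is smooth because $\phi$ is polynomial and $\rho_\theta$ is affine. For injectivity, suppose $\psi^\theta(\tau) = \psi^\theta(\tau')$. Both $\rho_\theta(\tau)$ and $\rho_\theta(\tau')$ lie in $\ParS$ by definition of $U_\theta$, so Proposition \ref{equal phi implies resc equivalence-main} gives $\rho_\theta(\tau) \rsim \rho_\theta(\tau')$ through rescaling factors $(\lambda^l)_{l=0}^L$ with $\lambda^0 = \lambda^L = \One$. Since $\rho_\theta(\tau)$ and $\rho_\theta(\tau')$ agree on every edge $v \to s_{\max}^\theta(v)$ (both equal to $w_{v \to s_{\max}^\theta(v)} \neq 0$, as $\theta \notin S$), applying (\ref{equivalence def-2-main}) along these edges yields $\lambda^{l+1}_{s_{\max}^\theta(v)} = \lambda^l_v$ for every hidden neuron $v \in V_l$; a backward induction starting from $\lambda^L = \One$ forces $\lambda^l = \One$ on all hidden layers, whence $\tau = \tau'$ by injectivity of $\rho_\theta$.

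The main obstacle is to show that the differential $D\psi^\theta(\tau)$ is injective at every $\tau \in U_\theta$, which will supply the immersion property. Setting $\theta' = \rho_\theta(\tau)$, differentiating Proposition \ref{equivalence implies equal phi-main} shows that the tangent space $T$ to the rescaling orbit at $\theta'$ sits inside $\ker D\phi(\theta')$; a rank count (each orbit is a submanifold of dimension $\sum_{l=1}^{L-1} N_l$ and $\phi$ is locally injective up to rescaling on $\ParS$) gives equality. It remains to verify $\mathrm{Im}(D\rho_\theta) \cap T = \{0\}$. A tangent vector in $T$ has the form $\delta w_{v \to v'} = (\delta\lambda^l_{v'} - \delta\lambda^{l-1}_v)\, w_{v \to v'}$ for $v \in V_{l-1}$, $v' \in V_l$, with $\delta\lambda^0 = \delta\lambda^L = 0$. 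Belonging to $\mathrm{Im}(D\rho_\theta)$ means $\delta w_{v \to s_{\max}^\theta(v)} = 0$ for every hidden neuron $v$; since $w_{v \to s_{\max}^\theta(v)} \neq 0$, this gives $\delta\lambda^{l+1}_{s_{\max}^\theta(v)} = \delta\lambda^l_v$, and a backward induction from $l = L-1$ forces $\delta\lambda = 0$. Therefore $D\psi^\theta(\tau)$ has full rank $|F_\theta| + |B|$. This step is precisely where the choice of the maximum-weight edge in the definition of $F_\theta$ is used.

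Finally, continuity of $(\psi^\theta)^{-1}$ on $V_\theta$ can be checked by exhibiting $\tau$ as a continuous function of $\psi^\theta(\tau)$ (by reading off suitable path-product components of $\phi$ and normalizing by the fixed weights, which are bounded away from zero locally). Combining this with Steps 2--3, each $\psi^\theta$ is a smooth embedding, so $V_\theta$ is an embedded submanifold of $\RR^{\mathcal{P} \times V_L}$ of the claimed dimension. The transition maps $(\psi^{\theta_2})^{-1} \circ \psi^{\theta_1}$ on overlaps are then automatically smooth, because the local immersion form provides a smooth left inverse of $\psi^{\theta_2}$ on a neighborhood of the ambient space, and $\psi^{\theta_1}$ is smooth. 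Alternatively, one may write the transition explicitly: given $\tau_1$, the rescaling factors relating $\rho_{\theta_1}(\tau_1)$ to the unique $\rho_{\theta_2}(\tau_2) \rsim \rho_{\theta_1}(\tau_1)$ are determined by the constraints on the fixed edges of $\theta_2$ as rational functions of the entries of $\rho_{\theta_1}(\tau_1)$, and hence $\tau_2$ depends smoothly on $\tau_1$. This establishes that $(V_\theta, (\psi^\theta)^{-1})_{\theta \in \ParS}$ is a smooth atlas on $\Sigma_1^*$.
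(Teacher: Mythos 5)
There are two genuine gaps, both at the places where the paper has to work hardest.

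First, your argument for the immersion property does not go through. You deduce $\Ker D\phi(\theta') = T_{\mathrm{orbit}}$ (the tangent space to the rescaling orbit) from the facts that the orbit has dimension $N_1+\dots+N_{L-1}$ and that ``$\phi$ is locally injective up to rescaling on $\ParS$'' (Proposition \ref{equal phi implies resc equivalence-main}). Injectivity of a map, even modulo a group action, gives no control on the rank of its differential (think of $t \mapsto t^3$ at $0$: injective, yet the derivative vanishes), so the inclusion $T_{\mathrm{orbit}} \subset \Ker D\phi(\theta')$ obtained by differentiating the invariance cannot be upgraded to an equality this way. Worse, the reverse inclusion is essentially equivalent to the statement you are trying to prove: once one knows $D\psi^\theta(\tau)$ is injective, a dimension count gives $\Ker D\phi(\theta') = T_{\mathrm{orbit}}$, so invoking it here is circular. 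Your computation $\Im(D\rho_\theta)\cap T_{\mathrm{orbit}} = \{0\}$ is correct and does use the choice of the maximum-weight edges, but without $\Ker D\phi(\theta') \subset T_{\mathrm{orbit}}$ it does not yield $\Im(D\rho_\theta)\cap\Ker D\phi(\theta') = \{0\}$. The paper avoids this entirely by computing the partial derivatives of $\psi^\theta$ explicitly and proving injectivity of $D\psi^\theta(\tau)$ by an induction over layers (Proposition \ref{the differential of psi^theta is injective}); some direct argument of this kind is needed.

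Second, you never show that $V_\theta$ is open in $\Sigma_1^*$ for the topology induced by $\RR^{\mathcal{P}\times V_L}$, and the manifold structure does not follow ``automatically'' from each $\psi^\theta$ being a smooth embedding whose images cover $\Sigma_1^*$: a union of embedded pieces covering a set need not be a manifold (two transverse lines through a common limit point already fail), and without openness of the chart images the transition maps are not even defined on open sets. The difficulty is that a point of $\Sigma_1^*$ close to $\psi^\theta(\tau)$ is of the form $\phi(\tilde\theta)$ for an arbitrary $\tilde\theta\in\ParS$, not a priori of the form $\psi^\theta(\tilde\tau)$; one must rescale $\tilde\theta$ onto the slice defined by the frozen edges, which requires knowing that the weights $\tilde w_{v\rightarrow s^\theta_{\max}(v)}$ are nonzero for $\phi(\tilde\theta)$ close enough to $\psi^\theta(\tau)$ and that $\ParS$ is stable under rescaling. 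This is exactly the content of Lemma \ref{tilde(theta)_v->v' neq 0 close to theta}, Corollary \ref{ParS is stable by r equivalence} and Proposition \ref{psi^t(U_t) is a neighborhood of t} in the paper, and it is the step your embedding-based shortcut silently skips. The remaining parts of your proposal (covering, dimension count, injectivity of $\psi^\theta$ via the rescaling factors forced to $1$ along the edges $v\rightarrow s^\theta_{\max}(v)$, and the sketched continuity of $(\psi^\theta)^{-1}$ by reading off path products) do match the paper's arguments.
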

Theorem \ref{Sigma_1^* is a smooth manifold-main} is proven in Appendix \ref{The smooth manifold structure of Sigma_1^*-sec}. Besides being key in Section \ref{main results-sec-main}, Theorem \ref{Sigma_1^* is a smooth manifold-main} (both the smooth manifold nature of $\Sigma_1^*$ and the explicit atlas $(V_\theta,(\psi^\theta)^{-1})_{\theta \in \ParS}$) may also be considered of more general independent interest. To our knowledge, such a result has not been established elsewhere in the literature. Notice that, as announced, despite the use of restricted parameters in $\ParF$, we can represent the {\em whole} tangent space at any point of $\Sigma_1^*$. The only consequence of the restriction is the uniqueness of the representation of the elements of tangent spaces.


\section{Main results: necessary and sufficient conditions for local identifiability}\label{main results-sec-main}

The main results of this paper rely on the decomposition \eqref{f_theta(X) - f_(tilde(theta))(X) = alpha(X,theta)(phi(theta) - phi(tilde(theta))} introduced in Section \ref{ReLU networks, lifting operator and rescaling of the parameters-sec-main}. To reformulate \eqref{f_theta(X) - f_(tilde(theta))(X) = alpha(X,theta)(phi(theta) - phi(tilde(theta))}, let us introduce the linear operator $A(X,\theta)$, which simply corresponds to the matrix product with $\alpha(X,\theta)$:
\[\fonction{A(X,\theta)}{\RR^{\mathcal{P} \times V_L}}{\RR^{n \times V_L}}{\eta}{\alpha(X,\theta) \eta,}\]
where $\alpha(X, \theta) \eta$ is the matrix product between $\alpha(X, \theta) \in \RR^{n \times \mathcal{P}}$ and $\eta \in \RR^{\mathcal{P} \times V_L}$. The operator $A(X,\theta)$ inherits the properties of $\alpha(X,\theta)$, in particular those stated in Proposition \ref{alpha_X is piecewise-constant-main}. Using $A(X,\theta)$, the relation \eqref{f_theta(X) - f_(tilde(theta))(X) = alpha(X,theta)(phi(theta) - phi(tilde(theta))} satisfied by $\tilde{\theta}$ in the neighborhood of $\theta$ becomes
\begin{equation}\label{A(X,theta) phi(theta) = f_theta (X)-main}
f_{\theta}(X) - f_{\tilde{\theta}}(X) = A(X, \theta) \cdot \left( \phi(\theta) - \phi(\tilde{\theta})\right).
\end{equation}

Let us also define the affine space (set-sum of a fixed point and a vector space)
\[N(X, \theta) = \phi(\theta) + \Ker A(X, \theta).\]

If a parameterization $\tilde{\theta} \in \Par$ is such that $f_{\tilde{\theta}}(X) = f_\theta(X)$ and \eqref{A(X,theta) phi(theta) = f_theta (X)-main} holds, then $ \phi(\theta) - \phi(\tilde{\theta}) \in \Ker A(X, \theta)$, so by definition $\phi(\tilde{\theta}) \in N(X, \theta)$. Since for $\tilde{\theta}$ in the neighborhood of $\theta$, we also have $\phi(\tilde{\theta}) \in \Sigma_1^*$, we see that local identifiability is closely related to the nature of the intersection between the smooth manifold $\Sigma_1^*$ and the affine subspace $N(X, \theta)$. 

Indeed, let us denote by $B_\infty(\phi(\theta), \epsilon) = \{ \eta \in \RR^{\mathcal{P} \times V_L}, \| \phi(\theta) - \eta \|_{\infty} < \epsilon \}$ the ball of center $\phi(\theta)$ and of radius $\epsilon > 0$. We have the following geometric necessary and sufficient condition of local identifiability, which states that local identifiability of $\theta$ holds if and only if the intersection between $\Sigma_1^*$ and $N(X, \theta)$ is locally reduced to the single point $\{\phi(\theta)\}$.

\begin{thm}\label{local identifiability using Sigma1*-main}
For any $X \in \RR^{n \times V_0}$ and $\theta \in \ParSX$, the two following statements are equivalent.
\begin{itemize}
\item[i)]$\theta$ is locally identifiable from $X$.

\item[ii)]There exists $\epsilon > 0$ such that $B_\infty(\phi(\theta), \epsilon) \cap \Sigma_1^* \cap N(X,\theta) = \{\phi(\theta)\}$.
\end{itemize}
\end{thm}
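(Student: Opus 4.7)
The plan is to prove both implications by exploiting the observation that, since $\theta \in \ParSX$, we can choose $\epsilon_1 > 0$ so small that $B_\infty(\theta, \epsilon_1) \subset \ParSX$. Because $\Delta_X$ is the union of boundaries of pieces on which the piecewise-constant map $\alpha_X$ is constant, this ball is disjoint from $\Delta_X$, and a connectedness argument shows that $\alpha_X$ is in fact constant on it. Thus $A(X,\tilde{\theta}) = A(X,\theta)$ for every $\tilde{\theta}$ in the ball, and \eqref{A(X,theta) phi(theta) = f_theta (X)-main} yields the equivalence
\[
f_{\theta}(X) = f_{\tilde{\theta}}(X) \ \Longleftrightarrow \ \phi(\tilde{\theta}) \in N(X,\theta),
\]
for such $\tilde{\theta}$; and $\tilde{\theta} \in \ParS$ gives $\phi(\tilde{\theta}) \in \Sigma_1^*$.

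For the implication $ii) \Rightarrow i)$, I take $\epsilon$ from the geometric condition and use continuity of $\phi$ to shrink $\epsilon_1$ further so that $\phi(B_\infty(\theta, \epsilon_1)) \subset B_\infty(\phi(\theta), \epsilon)$. If $\tilde{\theta} \in B_\infty(\theta, \epsilon_1)$ satisfies $f_\theta(X) = f_{\tilde{\theta}}(X)$, the opening paragraph gives $\phi(\tilde{\theta}) \in B_\infty(\phi(\theta), \epsilon) \cap \Sigma_1^* \cap N(X,\theta) = \{\phi(\theta)\}$. Then Proposition \ref{equal phi implies resc equivalence-main} yields $\theta \rsim \tilde{\theta}$, and a continuity argument on the rescaling factors (discussed below) upgrades this to $\theta \sim \tilde{\theta}$, establishing local identifiability.

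For the implication $i) \Rightarrow ii)$, I rely on Theorem \ref{Sigma_1^* is a smooth manifold-main}: $\psi^\theta$ is a homeomorphism from $U_\theta$ onto an open neighborhood $V_\theta$ of $\phi(\theta)$ in $\Sigma_1^*$, and $\rho_\theta$ is continuous. Letting $\epsilon_0$ denote the local identifiability threshold and $\epsilon_1$ the radius from the opening paragraph, I choose $\epsilon > 0$ small enough so that any $\eta \in B_\infty(\phi(\theta), \epsilon) \cap \Sigma_1^*$ lies in $V_\theta$ and factors as $\eta = \phi(\rho_\theta(\tau))$ with $\tilde{\theta} := \rho_\theta(\tau) \in B_\infty(\theta, \min(\epsilon_0, \epsilon_1))$. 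If additionally $\eta \in N(X,\theta)$, the equivalence above gives $f_\theta(X) = f_{\tilde{\theta}}(X)$; local identifiability then provides $\theta \sim \tilde{\theta}$, and Proposition \ref{equivalence implies equal phi-main} concludes that $\eta = \phi(\tilde{\theta}) = \phi(\theta)$.

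The main obstacle is the subtle gap between $\rsim$ and $\sim$ arising in the first direction: Proposition \ref{equal phi implies resc equivalence-main} delivers only $\rsim$, whereas Definition \ref{def:local:identif} requires $\sim$. The key observation is that for $\theta \notin S$ each hidden neuron has at least one nonzero outward weight, and the corresponding weight in $\tilde\theta$ has the same sign when $\tilde\theta$ is close to $\theta$. Thus, starting from $\lambda^0 = \One$, the rescaling coefficients $\lambda^l_v$ can be computed inductively as ratios of such close-to-equal weights, hence are close to $1$ and in particular positive, so that $\theta \rsim \tilde\theta$ forces $\sgn(\theta) = \sgn(\tilde\theta)$ and therefore $\theta \sim \tilde\theta$, after a final shrinking of $\epsilon_1$.
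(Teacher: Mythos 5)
Your proof follows essentially the same route as the paper's: constancy of $\alpha_X$ (hence of $A(X,\cdot)$) on a ball around $\theta$ avoiding $S \cup \Delta_X$, the resulting equivalence $f_\theta(X)=f_{\tilde\theta}(X) \Leftrightarrow \phi(\tilde\theta)\in N(X,\theta)$ on that ball, the homeomorphism $\psi^\theta$ together with the openness of $V_\theta$ in $\Sigma_1^*$ and continuity of $\rho_\theta\circ(\psi^\theta)^{-1}$ for $i)\Rightarrow ii)$, and Propositions \ref{equivalence implies equal phi-main} and \ref{equal phi implies resc equivalence-main} plus a closeness-forces-sign step for $ii)\Rightarrow i)$, which is exactly the paper's Proposition \ref{rescaling equivalent and close implies equivalent}. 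The only correction needed is in your sketch of the $\rsim$-to-$\sim$ upgrade: since $\theta\notin S$ guarantees nonzero \emph{outward} weights for hidden neurons (not nonzero inward weights), the induction on the rescaling coefficients must anchor at $\lambda^L=\One_{V_L}$ and proceed downward (or, as the paper does, one can skip positivity of the $\lambda$'s entirely and check signs entrywise, using the rescaling relation only to force zero entries of $\tilde\theta$ to stay zero), rather than starting from $\lambda^0=\One_{V_0}$ as you wrote.
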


Theorem \ref{local identifiability using Sigma1*-main} is proven in Appendix \ref{Conditions of local identifiability-sec}, and is illustrated in Figure \ref{geometric condition}. This geometric condition is crucial for showing the next two results which give testable conditions of identifiability. Theorems \ref{Necessary condition-main} and \ref{Sufficient condition-main} rely on the rank of $A(X,\theta)$ and of another linear operator $\Gamma(X,\theta)$, which we now define. Since, as we said, the function $\psi^\theta$ is $C^\infty$, let us denote by $D{\psi^{\theta}}(\tau): \ParF \rightarrow \RR^{\mathcal{P}\times V_L}$ its differential at the point $\tau$, for any $\tau \in U_\theta$. We define the linear operator $\Gamma(X, \theta) : \ParF \rightarrow \RR^{n \times V_L}$ by
\begin{equation}\label{definition of Gamma}
   \Gamma (X, \theta) = A(X, \theta) \circ D\psi^\theta(\thF).
\end{equation}

We denote $R_{A} = \rk (A(X, \theta))$ and $R_{\Gamma} = \rk (\Gamma (X, \theta))$. Since $\Gamma (X, \theta)$ is defined on $\ParF$, we have $0 \leq R_{\Gamma} \leq |F_{\theta}| + |B|$, and the expression \eqref{definition of Gamma} shows that we also have $0 \leq R_{\Gamma} \leq R_{A}$. We can now define the two following conditions.
\begin{cond}[$C_N$]\label{Condition C_N}
Condition $C_N$ is satisfied by $(\theta, X)$ iif $R_\Gamma < R_A$ or $R_\Gamma = |F_{\theta}| + |B|$.
\end{cond}

\begin{cond}[$C_S$]\label{Condition C_S}
Condition $C_S$ is satisfied by $(\theta, X)$ iif $R_\Gamma = |F_{\theta}| + |B|$.
\end{cond}

The following result states that $C_N$ is necessary for local and therefore global identifiability.

\begin{thm}[Necessary condition of identifiability]\label{Necessary condition-main}
Let $X \in \RR^{n \times V_0}$ and $\theta \in \ParSX$. If $C_N$ is not satisfied, then $\theta$ is not locally identifiable from $X$ (thus not globally identifiable).
\end{thm}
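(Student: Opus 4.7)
The natural approach is to pivot through Theorem \ref{local identifiability using Sigma1*-main}: to prove that $\theta$ is not locally identifiable, it suffices to exhibit, in every neighborhood of $\phi(\theta)$, points of $\Sigma_1^* \cap N(X,\theta)$ different from $\phi(\theta)$. The chart of $\Sigma_1^*$ at $\phi(\theta)$ provided by Theorem \ref{Sigma_1^* is a smooth manifold-main} will then reduce the problem to a local submersion argument in $\ParF$.

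First I unfold the hypothesis. Since $R_\Gamma \leq R_A$ and $R_\Gamma \leq |F_\theta|+|B|$ hold unconditionally, ``$C_N$ does not hold'' is equivalent to
\[
R_\Gamma = R_A \quad \text{and} \quad R_\Gamma < |F_\theta| + |B|.
\]
Next I introduce the smooth map
\[
F \colon U_\theta \longrightarrow \Im\bigl(A(X,\theta)\bigr), \qquad \tau \longmapsto A(X,\theta)\bigl(\psi^\theta(\tau) - \phi(\theta)\bigr),
\]
whose image indeed lies in $\Im(A(X,\theta))$. I observe $F(\thF) = 0$ and compute the differential at $\thF$ by the chain rule: $DF(\thF) = A(X,\theta) \circ D\psi^\theta(\thF) = \Gamma(X,\theta)$. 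The equality $R_\Gamma = R_A$ says precisely that $\Gamma(X,\theta)$ is surjective onto $\Im(A(X,\theta))$, so $F$ is a submersion at $\thF$.

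I then apply the submersion form of the implicit function theorem. It yields that, locally around $\thF$, the fiber $F^{-1}(0)$ is a smooth submanifold of $U_\theta$ of dimension $(|F_\theta|+|B|) - R_A = (|F_\theta|+|B|) - R_\Gamma > 0$. In particular, in any neighborhood of $\thF$ there exist $\tau \neq \thF$ with $F(\tau) = 0$; for such a $\tau$, the point $\eta := \psi^\theta(\tau) \in \Sigma_1^*$ satisfies $\eta - \phi(\theta) \in \Ker A(X,\theta)$, hence $\eta \in N(X,\theta)$, and $\eta \neq \phi(\theta)$ by the injectivity of the chart $\psi^\theta$. Continuity of $\psi^\theta$ allows $\eta$ to be placed in any $B_\infty(\phi(\theta), \epsilon)$, so Theorem \ref{local identifiability using Sigma1*-main} concludes that $\theta$ is not locally identifiable from $X$; since global identifiability implies local identifiability, it is not globally identifiable either.

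The main obstacle, and the reason both disjuncts defining $C_N$ enter the statement, is the bookkeeping of the codomain in the submersion argument: one must restrict the target of $F$ to $\Im(A(X,\theta))$ before claiming that $\Gamma(X,\theta) = DF(\thF)$ is surjective, and this is exactly what the equality $R_\Gamma = R_A$ buys. The complementary inequality $R_\Gamma < |F_\theta|+|B|$ then guarantees that the fiber has positive dimension, furnishing the required nonconstant perturbations of $\phi(\theta)$ inside $\Sigma_1^* \cap N(X,\theta)$.
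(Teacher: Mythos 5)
Your proof is correct, and it reaches the conclusion by a genuinely different local-analysis argument than the paper, even though both pivot on the geometric characterization of Theorem \ref{local identifiability using Sigma1*-main}. The paper (Lemmas \ref{lemma dim(H)} and \ref{lemma necessary condition}) chooses a complement $G$ of $H = T \cap \Ker A(X,\theta)$ inside $\Ker A(X,\theta)$, uses $R_\Gamma = R_A$ to get the splitting $T \oplus G = \RR^{\mathcal{P}\times V_L}$, applies the inverse function theorem to $\xi(\tau,g) = \psi^\theta(\tau) + g$, and then produces the nearby point of $\Sigma_1^* \cap N(X,\theta)$ by solving $\psi^\theta(\tau) + g = \phi(\theta) + w$ for a nonzero $w \in H$. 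You instead restrict the codomain of $\tau \mapsto A(X,\theta)\bigl(\psi^\theta(\tau) - \phi(\theta)\bigr)$ to $\Im A(X,\theta)$ (using the inclusion $\Im\Gamma(X,\theta) \subseteq \Im A(X,\theta)$, which, combined with $R_\Gamma = R_A$, gives surjectivity of the differential) and invoke the submersion form of the implicit function theorem, so that the chart preimage of $\Sigma_1^* \cap N(X,\theta)$ near $\thF$ is exhibited directly as a submanifold of dimension $|F_\theta|+|B|-R_\Gamma > 0$; injectivity and continuity of $\psi^\theta$ then place a point $\eta \neq \phi(\theta)$ of $\Sigma_1^* \cap N(X,\theta)$ in every ball $B_\infty(\phi(\theta),\epsilon)$, violating item ii). Your route buys a cleaner statement (the local intersection is literally a positive-dimensional manifold, not just non-singleton), avoids choosing the complement $G$ and the tangent-space dimension count of Lemma \ref{lemma dim(H)}, and only needs injectivity of the map $\psi^\theta$ rather than of its differential; the paper's route builds the more explicit diffeomorphism $\xi$, whose tangent-space bookkeeping ($\dim H = |F_\theta|+|B|-R_\Gamma$) is shared infrastructure with the proof of the sufficient condition. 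The only presentational nitpick is that you should state explicitly that $\Im\Gamma(X,\theta) \subseteq \Im A(X,\theta)$ (immediate from $\Gamma = A \circ D\psi^\theta$) before concluding surjectivity from $R_\Gamma = R_A$, and note that surjectivity of the differential at $\thF$ propagates to a neighborhood so that $0$ is a regular value of the restricted map; both are routine.
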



The following result states that $C_S$ is a sufficient condition of local identifiability.

\begin{thm}[Sufficient condition of local identifiability]\label{Sufficient condition-main}
Let $X \in \RR^{n \times V_0}$ and $\theta \in \ParSX$. If $C_S$ is satisfied, then $\theta$ is locally identifiable from $X$.
\end{thm}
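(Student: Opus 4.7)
The plan is to invoke Theorem~\ref{local identifiability using Sigma1*-main}, which reduces the task to exhibiting $\epsilon > 0$ such that $B_\infty(\phi(\theta), \epsilon) \cap \Sigma_1^* \cap N(X, \theta) = \{\phi(\theta)\}$. Since $\theta \in \ParSX \subset \ParS$, Theorem~\ref{Sigma_1^* is a smooth manifold-main} provides a chart $(V_\theta, (\psi^\theta)^{-1})$ around $\phi(\theta) = \psi^\theta(\thF)$, and the map $\psi^\theta : U_\theta \to V_\theta$ is a smooth homeomorphism. Hence every element of $\Sigma_1^*$ close to $\phi(\theta)$ can be written uniquely as $\psi^\theta(\tau)$ for some $\tau$ close to $\thF$, and the problem becomes one about the level sets of a smooth map from $U_\theta$.

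The first step is to read the intersection condition through this chart. A point $\psi^\theta(\tau)$ lies in $N(X, \theta) = \phi(\theta) + \Ker A(X, \theta)$ iff $A(X, \theta)\bigl(\psi^\theta(\tau) - \psi^\theta(\thF)\bigr) = 0$. I would therefore introduce the smooth map
\[
H : U_\theta \longrightarrow \RR^{n \times V_L}, \qquad H(\tau) = A(X, \theta)\,\psi^\theta(\tau),
\]
so that $\Sigma_1^* \cap N(X, \theta) \cap V_\theta$, read in the chart, is exactly the level set $\{\tau \in U_\theta : H(\tau) = H(\thF)\}$. Establishing local identifiability then amounts to showing that this level set reduces to $\{\thF\}$ on some neighborhood of $\thF$.

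The second step is differentiation. By the chain rule and the definition~\eqref{definition of Gamma} of $\Gamma(X,\theta)$,
\[
DH(\thF) = A(X, \theta) \circ D\psi^\theta(\thF) = \Gamma(X, \theta).
\]
Condition $C_S$ asserts $\rk \Gamma(X, \theta) = |F_\theta| + |B| = \dim \ParF$, so $\Gamma(X, \theta)$, and thus $DH(\thF)$, is injective. Consequently $H$ is an immersion at $\thF$, and the local immersion theorem yields an open neighborhood $U \subset U_\theta$ of $\thF$ on which $H$ is injective. On $U$, the level set $\{H = H(\thF)\}$ collapses to $\{\thF\}$, hence $\psi^\theta(U) \cap N(X, \theta) = \{\phi(\theta)\}$.

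Finally, I would translate this chart-neighborhood statement into the ambient $\infty$-ball statement required by Theorem~\ref{local identifiability using Sigma1*-main}. Because $\psi^\theta$ is a homeomorphism onto $V_\theta$ equipped with the topology induced from $\RR^{\mathcal{P} \times V_L}$, $\psi^\theta(U)$ is open in $\Sigma_1^*$, i.e.\ $\psi^\theta(U) = W \cap \Sigma_1^*$ for some open $W \subset \RR^{\mathcal{P} \times V_L}$ containing $\phi(\theta)$. Choosing $\epsilon > 0$ with $B_\infty(\phi(\theta), \epsilon) \subset W$ then gives $B_\infty(\phi(\theta), \epsilon) \cap \Sigma_1^* \cap N(X, \theta) \subset \psi^\theta(U) \cap N(X, \theta) = \{\phi(\theta)\}$, which concludes via Theorem~\ref{local identifiability using Sigma1*-main}. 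The analytic heart of the proof is the one-line application of the local immersion theorem to $H$; the main (mild) subtlety is only this last passage between the intrinsic topology of $\Sigma_1^*$ and the ambient norm topology.
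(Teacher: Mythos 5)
Your proposal is correct, and its skeleton coincides with the paper's: both reduce the statement to the geometric characterization of Theorem~\ref{local identifiability using Sigma1*-main}, pass to the chart $\psi^\theta$ around $\phi(\theta)$ (using Propositions~\ref{psi^theta is a homeomorphism} and~\ref{psi^t(U_t) is a neighborhood of t} to go back and forth between the intrinsic topology of $\Sigma_1^*$ and the ambient ball), and exploit that under $C_S$ the operator $\Gamma(X,\theta) = A(X,\theta)\circ D\psi^\theta(\thF)$ is injective. The only real divergence is the finishing move: you package the local analysis as the local immersion theorem applied to $H(\tau)=A(X,\theta)\,\psi^\theta(\tau)$, whose differential at $\thF$ is $\Gamma(X,\theta)$, obtaining local injectivity of $H$ and hence triviality of the level set $\{H=H(\thF)\}$ near $\thF$; the paper instead proves only what is needed (that this level set is locally $\{\thF\}$) by hand, via Lemma~\ref{lemma dim(H)} giving $T\cap\Ker A(X,\theta)=\{0\}$ and a contradiction argument that normalizes $\tau_n-\thF$, extracts a convergent subsequence, and produces a nonzero element of $T\cap\Ker A(X,\theta)$. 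Your route is marginally stronger (local injectivity of $H$, not just of its $H(\thF)$-fiber) and shorter, at the price of citing the immersion theorem as a black box; the paper's argument is self-contained and is essentially an inlined proof of the same local statement. One cosmetic point: your claim that every element of $\Sigma_1^*$ near $\phi(\theta)$ is uniquely of the form $\psi^\theta(\tau)$ is exactly Proposition~\ref{psi^t(U_t) is a neighborhood of t} combined with injectivity of $\psi^\theta$, so no new argument is needed there.
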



Both theorems are proven in Appendix \ref{Conditions of local identifiability-sec}. To discuss these two results, let us point out that the output spaces of $\Gamma(X, \theta)$ and $A(X, \theta)$ have the same dimension, equal to $nN_L$. Each new input adds $N_L$ to this dimension. One can verify that $R_A - R_\Gamma$ is initially $0$ and cannot decrease when new inputs are added.  If a new input leads to $R_A > R_\Gamma$, it can be discarded to preserve $R_A = R_\Gamma$. Moreover, such an input seems unlikely when $R_A < |F_\theta| + |B|$. If the equality $R_\Gamma = R_A$ is enforced, the condition $R_\Gamma = |F_\theta| + |B|$ is both necessary and sufficient. Finally, to satisfy $R_\Gamma = |F_\theta| + |B|$, the dimensions must satisfy $n N_L \geq |F_\theta| + |B|$. The general belief is that the latter is the condition of identifiability since $n N_L$ is the number of scalar measurements and $|F_\theta| + |B|$ is the number of independent free parameters, see Theorem \ref{Sigma_1^* is a smooth manifold-main}.



\section{Checking the conditions numerically}\label{Computations-sec-main}
 
The key benefit of the conditions $C_N$ and $C_S$, compared to the existing literature, is that they can be numerically tested for any fixed finite sample. They need the computation of the rank of two linear operators, namely $\Gamma(X, \theta)$ and $A(X, \theta)$. The operator $\Gamma(X,\theta)$ satisfies the following:

\begin{prop}\label{Gamma is the differential-main}
Let $X \in \RR^{n \times V_0}$ and $\theta \in \ParSX$. The function $ \tau \mapsto f_{\rho_\theta(\tau)}(X)$, for  $\tau \in U_\theta$ is differentiable in a neighborhood of $\thF$, and we denote by $D_\tau f_{\rho_\theta(\tau_\theta)}(X)$ its differential at $\thF$. We have 
\begin{equation}\label{Gamma = D_tau f}
D_\tau f_{\rho_\theta(\tau_\theta)}(X) = \Gamma(X, \theta).
\end{equation}
\end{prop}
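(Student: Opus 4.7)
The plan is to reduce $\tau \mapsto f_{\rho_\theta(\tau)}(X)$, locally around $\thF$, to a composition of the fixed linear operator $A(X,\theta)$ and the smooth operator $\psi^\theta$, and then invoke the chain rule together with the definition \eqref{definition of Gamma} of $\Gamma(X,\theta)$.

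First, I would exploit that $\theta \in \ParSX$ lies outside both closed sets $S$ and $\Delta_X$ (closedness of $\Delta_X$ comes from Proposition \ref{alpha_X is piecewise-constant-main}, closedness of $S$ from its description as a finite union of linear subspaces). Hence there exists an open neighborhood $\mathcal{O}$ of $\theta$ inside $\ParS$ that is disjoint from $\Delta_X$. By Proposition \ref{alpha_X is piecewise-constant-main}, $\alpha_X$ is constant on the connected components of $\Par \setminus \Delta_X$, so after shrinking $\mathcal{O}$ to a connected (e.g.\ convex) open neighborhood of $\theta$, we obtain
\[
\alpha(X, \tilde{\theta}) = \alpha(X, \theta) \qquad \text{for all } \tilde{\theta} \in \mathcal{O}.
\]

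Next, since $\rho_\theta$ is affine and hence continuous, the preimage $\mathcal{U} := \rho_\theta^{-1}(\mathcal{O})$ is open in $\ParF$ and contains $\thF$ (recall $\rho_\theta(\thF) = \theta \in \mathcal{O}$). Moreover $\mathcal{U} \subset U_\theta$ since $\mathcal{O} \subset \ParS$. For any $\tau \in \mathcal{U}$, the matrix identity \eqref{fundamental prop of alpha-matrix version-main} together with the local constancy of $\alpha_X$ and the definition \eqref{definition psi^theta-main} of $\psi^\theta$ gives
\[
f_{\rho_\theta(\tau)}(X) = \alpha(X, \rho_\theta(\tau))\, \phi(\rho_\theta(\tau)) = \alpha(X, \theta)\, \psi^\theta(\tau) = A(X,\theta)\bigl(\psi^\theta(\tau)\bigr).
\]

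Since $\psi^\theta$ is $C^\infty$ on $U_\theta$ (as mentioned after \eqref{definition psi^theta-main} and proved in Appendix \ref{The smooth manifold structure of Sigma_1^*-sec}) and $A(X,\theta)$ is linear, the composition $\tau \mapsto A(X,\theta)(\psi^\theta(\tau))$ is $C^\infty$ on $\mathcal{U}$, which establishes the differentiability claim in a neighborhood of $\thF$. Applying the chain rule at $\thF$ and using \eqref{definition of Gamma} yields
\[
D_\tau f_{\rho_\theta(\tau_\theta)}(X) = A(X,\theta) \circ D\psi^\theta(\thF) = \Gamma(X,\theta),
\]
which is \eqref{Gamma = D_tau f}. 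There is no real obstacle; the only point requiring care is verifying that $\alpha_X$ is constant on a connected open neighborhood of $\theta$, which follows immediately from $\theta \notin \Delta_X$ and Proposition \ref{alpha_X is piecewise-constant-main}.
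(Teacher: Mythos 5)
Your proof is correct and follows essentially the same route as the paper's: rewrite $f_{\rho_\theta(\tau)}(X)$ locally as $A(X,\theta)\cdot\psi^\theta(\tau)$ via \eqref{fundamental prop of alpha-matrix version-main} and the smoothness of $\psi^\theta$, then apply the chain rule and \eqref{definition of Gamma}. The only difference is that you spell out the local constancy of $\alpha_X$ near $\theta \notin \Delta_X$ (via closedness of $\Delta_X$ and a connected neighborhood), a step the paper's proof leaves implicit.
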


The proof of Proposition \ref{Gamma is the differential-main} is in Appendix \ref{Checking the conditions numerically-sec}.
Since the reparameterization with $\rho_\theta$ simply consists in fixing the weights of the edges $v \rightarrow s^\theta_{\max}(v)$ to the value $w_{v \rightarrow s^\theta_{\max}(v)}$, \eqref{Gamma = D_tau f} shows that the coefficients of $\Gamma(X, \theta)$ can be computed by a classic backpropagation algorithm $N_L$ times for each input $x^i$, simply omitting the derivatives with respect to the edges of the form $v \rightarrow s^\theta_{\max}(v)$. An explicit expression of the coefficients of $\Gamma(X, \theta)$ is given in the Appendix \ref{Checking the conditions numerically-sec}.

To be satisfied, $C_S$ needs the dimensions of $\Gamma(X, \theta)$ to satisfy $nN_L \geq |F_\theta| + |B|$. One then needs to compute the rank $R_\Gamma$ of $\Gamma(X, \theta)$, which means computing the rank of a $nN_L \times (|F_\theta| + |B|)$ matrix. Existing algorithms allow to do this with a complexity $O(nN_L (|F_\theta| + |B|)^{\omega-1})$ (up to polylog terms), where $\omega$ is the matrix multiplication exponent and satisfies $\omega < 2.38$ \cite{cheung2013fast}.

When it comes to $C_N$, one needs in addition to know the rank $R_A$ of $A(X, \theta)$, which, as Proposition \ref{rank of A-main} states, requires to compute the rank of $\alpha(X,\theta)$.
\begin{prop}\label{rank of A-main}
Let $X \in \RR^{n \times V_0}$ and $\theta \in \Par$. We have $R_A = N_L \rk \left(\alpha(X,\theta)\right)$.
\end{prop}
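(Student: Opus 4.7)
The plan is to observe that the operator $A(X,\theta)$ acts on $\eta\in\RR^{\mathcal{P}\times V_L}$ column by column, independently, and then to conclude that its rank is $N_L$ times the rank of $\alpha(X,\theta)$.

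First I would write, for each $v\in V_L$, the $v$-th column of $A(X,\theta)\eta = \alpha(X,\theta)\eta$ as
\[
\big(\alpha(X,\theta)\eta\big)_{\cdot,v} \;=\; \alpha(X,\theta)\,\eta_{\cdot,v},
\]
where $\eta_{\cdot,v}\in\RR^{\mathcal{P}}$ denotes the $v$-th column of $\eta$. Thus, after identifying $\RR^{\mathcal{P}\times V_L}\cong (\RR^{\mathcal{P}})^{V_L}$ and $\RR^{n\times V_L}\cong (\RR^{n})^{V_L}$ via their columns, the operator $A(X,\theta)$ becomes a block-diagonal operator consisting of $N_L$ identical copies of the linear map $z\mapsto \alpha(X,\theta)z$ from $\RR^{\mathcal{P}}$ to $\RR^{n}$.

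The second step is simply to apply the standard fact that the rank of a block-diagonal operator is the sum of the ranks of its diagonal blocks. Each block has rank $\rk(\alpha(X,\theta))$, and there are $N_L$ blocks, giving
\[
R_A \;=\; \rk(A(X,\theta)) \;=\; N_L \cdot \rk(\alpha(X,\theta)),
\]
which is the claim. Equivalently, one can invoke the Kronecker product identity $A(X,\theta) = \alpha(X,\theta) \otimes \Id_{\RR^{V_L}}$ and use $\rk(M\otimes N) = \rk(M)\rk(N)$.

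There is no real obstacle here; the proposition is essentially a structural observation about the linear operator $A(X,\theta)$. The only thing to be careful about is the bookkeeping between the matrix viewpoint (in which $\alpha(X,\theta)\eta$ is a matrix product) and the operator viewpoint (in which $A(X,\theta)$ is a linear map between finite-dimensional spaces), but these agree by construction of $A(X,\theta)$ given in the previous section.
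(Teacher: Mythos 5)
Your proof is correct and follows essentially the same route as the paper: both decompose $\eta \in \RR^{\mathcal{P}\times V_L}$ into its $N_L$ columns, observe that $A(X,\theta)$ acts as $\alpha(X,\theta)$ on each column independently, and conclude that the rank is $N_L\,\rk(\alpha(X,\theta))$. Your explicit invocation of the block-diagonal rank fact (or the Kronecker identity $\rk(M\otimes N)=\rk(M)\rk(N)$) merely makes precise the final step that the paper states without further justification.
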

The dimensions of $\alpha(X,\theta)$ are sensibly larger, with $|\mathcal{P}|$ columns and $n$ lines, and typically $|\mathcal{P}| >> n$. However it may have some sparsity properties, as its entries consist in products of activation indicators (with possibly one input $x^i_{v_0}$), any one of them being zero causing many entries to vanish. The question of the efficient computation of $R_A$ still needs to be explored and is left as open for future work.

\section{Conclusion} \label{section:CCL}

This paper is the first to characterize local identifiability for deep ReLU networks for any given finite sample, with testable conditions. The practical use of these conditions deserves follow-up research, and so does an extension of our approach to inverse stability. The role of ReLU is crucial in our approach, especially for the necessary condition of local identifiability and with the linear representation (Proposition \ref{fundamental prop of alpha-main}). In the end, from Theorem \ref{Sufficient condition-main} and Proposition \ref{Gamma is the differential-main},  the sufficient condition for local indentifiability is expressed from the Jacobian matrix of the neural network function with respect to its parameters. Extending this to other activation functions than ReLU is an interesting perspective.

\begin{ack}
The authors would like to thank Pierre Stock and Rémi Gribonval for the fruitful discussions around this work, notably regarding the construction of $\phi$ and its link to the question of local identifiability. 

This work has benefited from the AI Interdisciplinary Institute ANITI. ANITI is funded by the French ``Investing for the Future – PIA3” program under the Grant agreement n°ANR-19-PI3A-0004.

The authors gratefully acknowledge the support of the DEEL project.\footnote{\url{https://www.deel.ai/}}

\end{ack}

\bibliographystyle{plainnat}

\bibliography{biblio}

\section*{Checklist}


\begin{enumerate}

\item For all authors...
\begin{enumerate}
  \item Do the main claims made in the abstract and introduction accurately reflect the paper's contributions and scope?
    \answerYes{ The outline in Section 1.4 (“Overview of the article”) provides pointers to where the claimed contributions of the paper are provided.}
  \item Did you describe the limitations of your work?
    \answerYes{Section \ref{Computations-sec-main} acknowledges the open problem of an efficient computation of the rank of $\alpha(X,\theta)$ and Section \ref{section:CCL} describes other remaining open questions.}
  \item Did you discuss any potential negative societal impacts of your work?
    \answerNA{This is a
theoretical/foundation work that adds to the theory and methodology of deep learning.
As for any such contributions, the positive or negative societal impact will depend on
the application case. We do not promote any harmful use of this theory, but we expand
on the existing knowledge.}
  \item Have you read the ethics review guidelines and ensured that your paper conforms to them?
    \answerYes{See previous question.}
\end{enumerate}

\item If you are including theoretical results...
\begin{enumerate}
  \item Did you state the full set of assumptions of all theoretical results?
   \answerYes{All our results explicitly refer to their required assumptions. Some general assumptions that hold throughout the paper are also stated at the beginning.} 
        \item Did you include complete proofs of all theoretical results?
    \answerYes{All the proofs are provided in the supplement.}
\end{enumerate}

\item If you ran experiments...
\begin{enumerate}
  \item Did you include the code, data, and instructions needed to reproduce the main experimental results (either in the supplemental material or as a URL)?
    \answerNA{We did not run experiments.}
  \item Did you specify all the training details (e.g., data splits, hyperparameters, how they were chosen)?
    \answerNA{}
        \item Did you report error bars (e.g., with respect to the random seed after running experiments multiple times)?
    \answerNA{}
        \item Did you include the total amount of compute and the type of resources used (e.g., type of GPUs, internal cluster, or cloud provider)?
    \answerNA{}
\end{enumerate}

\item If you are using existing assets (e.g., code, data, models) or curating/releasing new assets...
\begin{enumerate}
  \item If your work uses existing assets, did you cite the creators?
     \answerNA{We did not use existing assets (code, data or models) nor cure/release new assets (code, data or models).}
  \item Did you mention the license of the assets?
    \answerNA{}
  \item Did you include any new assets either in the supplemental material or as a URL?
   \answerNA{}
  \item Did you discuss whether and how consent was obtained from people whose data you're using/curating?
   \answerNA{}
  \item Did you discuss whether the data you are using/curating contains personally identifiable information or offensive content?
   \answerNA{}
\end{enumerate}

\item If you used crowdsourcing or conducted research with human subjects...
\begin{enumerate}
  \item Did you include the full text of instructions given to participants and screenshots, if applicable?
   \answerNA{We did not use crowdsourcing nor conducted research with human subjects.}
  \item Did you describe any potential participant risks, with links to Institutional Review Board (IRB) approvals, if applicable?
  \answerNA{}
  \item Did you include the estimated hourly wage paid to participants and the total amount spent on participant compensation?
   \answerNA{}
\end{enumerate}

\end{enumerate}


\newpage

\appendix

\section{Notations}

In this section, we define notations, many of which are standard, that are useful in the proofs.

We denote by $\NN$ the set of all natural numbers, including $0$, and by $\NN^*$ the set $\NN$ without $0$. We denote by $\ZZ$ the set of all integers. For any $a,b \in \ZZ$, we denote by $\lb a, b \rb$ the set of all integers $k \in \ZZ$ satisfying $a \leq k \leq b$. For any finite set $A$, we denote by $|A|$ the cardinal of $A$.

For $n, N \in \mathbb{N}^*$, we denote by $\mathbb{R}^N$ the $N$-dimensional real vector space and by $\mathbb{R}^{n \times N}$ the vector space of real matrices with $n$ lines and $N$ columns. For a vector $x = ( x_1, \dots, x_N)^T \in \RR^N$, we use the norm $\| x \|_{\infty} = \max_{i \in \lb 1, N \rb}|x_i|$. For $x \in \mathbb{R}^N$ and $r>0$, we denote $B_\infty(x,r) = \{ y \in \mathbb{R}^N , \| y - x \|_\infty < r \}$. 

For any vector $x = (x_1, \dots, x_N)^T \in \RR^N$, we define $\sgn(x) =(\sgn(x_1), \dots, \sgn(x_N))^T \in \{-1, 0, 1 \}^N$ as the vector whose $i^{th}$ component is equal to
\[\sgn(x_i) =  \begin{cases}1 & \text{if } x_i > 0 \\ 0 & \text{if } x_i = 0 \\ -1 & \text{if } x_i < 0. \end{cases}\]

For any matrix $M \in \mathbb{R}^{n\times N}$, for all $i \in \llbracket 1 , n \rrbracket$, we denote by $M_{i,:}$ the $i^{\text{th}}$ line of $M$. The vector $M_{i,:}$ is a line vector whose $j^{\text{th}}$ component is $M_{i,j}$. Similarly, for $j \in \llbracket 1 , N \rrbracket$, we denote by $M_{:,j}$ the $j^{\text{th}}$ column of $M$, which is the column vector whose $i^{\text{th}}$ component is $M_{i,j}$. For any matrix $M \in \mathbb{R}^{n \times N}$, we denote by $M^T \in \mathbb{R}^{N\times n}$ the transpose matrix of $M$. 

We denote by $\Id_N$ the $N \times N$ identity matrix and by $\One_N$ the vector \mbox{$(1,1, \dots, 1)^T \in \mathbb{R}^N$}. If $\lambda \in \mathbb{R}^N$ is a vector of size $N$, for some $N \in \mathbb{N}^*$, we denote by $\diag(\lambda)$ the $N \times N$ matrix defined by:
\[\diag(\lambda)_{i,j} = \begin{cases} \lambda_i & \text{if } i=j \\ 0 & \text{otherwise.}\end{cases}\] 

If $X$ and $Y$ are two sets and $h : X \rightarrow Y$ is a function, for a subset $A \subset Y$, we denote by $h^{-1}(A)$ the preimage of $A$ under $f$, that is
\[h^{-1}(A)=\{x \in X, h(x) \in A \}.\]
Note that this does not require the function $h$ to be injective.

For any $n,N \in \NN^*$ and any differentiable function $f:\RR^n \rightarrow \RR^N$, for all $x \in \RR^n$, we denote by $Df(x)$ its differential at the point $x$, i.e. the linear application $Df(x):\RR^n \rightarrow \RR^N$ satisfying, for all $h \in \RR^n$,
\[f(x+h) = f(x) + Df(x) \cdot h + o(h).\]
If we denote by $x_j$ and $h_j$ the components of $x$ and $h$, for $j \in \llbracket 1, n \rrbracket$, we have
\[Df(x)\cdot h = \sum_{j=1}^n \frac{\partial f}{\partial x_j}(x) h_j,\]
where for all $j$, $\frac{\partial f}{\partial x_j}(x) \in \RR^N$.
If $f: \RR^n \rightarrow \RR^N$ is a linear application, we denote by $\Ker f$ the set $\{ x \in \RR^n, f(x) = 0\}$, which is a linear subset of $\RR^n$.

\section{The lifting operator \texorpdfstring{$\phi$}{}}\label{the lifting operator-sec}

Let us introduce the notion of `path', extending the definition in Section \ref{the lifting operator-sec-main}. A path is a sequence of neurons $(v_k, v_{k+1}, \dots, v_l) \in V_k \times V_{k+1} \times \dots \times V_l$, for integers $k,l$ satisfying $0 \leq k \leq l \leq L$. In particular, for all $l \in \lb 0, L-1 \rb$, the set $\mathcal{P}_l$ defined in Section \ref{the lifting operator-sec-main} contains all the paths starting from layer $l$ and ending in layer $L-1$. We recall
\[\mathcal{P} = \left( \bigcup_{l=0}^{L-1} \mathcal{P}_l \right) \cup \{ \beta \}.\]

If $k,l,m \in \NN$ are three integers satisfying $0 \leq k < l \leq m \leq L$, and $p = (v_k, \dots, v_{l-1}) \in V_k \times \dots \times V_{l-1}$ and $p' = (v_l, \dots, v_m) \in V_l \times \dots \times V_m$ are two paths such that $p$ ends in the layer preceding the starting layer of $p'$, we define the union of the paths by
\[p \cup p' = ( v_k, \dots, v_{l-1}, v_l, \dots v_m) \in V_k \times \dots \times V_m.\]

Before proving Proposition \ref{fundamental prop of alpha-main}, let us compare briefly our construction to \cite{stock:hal-03292203}. The lifting operator $\phi$ introduced in Section \ref{the lifting operator-sec-main} is similar to the operator $\boldsymbol{\Phi}$ in \cite{stock:hal-03292203}, except that $\boldsymbol{\Phi}$ does not take a matrix form. The operator $\alpha(x,\theta)$ introduced in Section \ref{the lifting operator-sec-main} corresponds partly to the object $\overline{\boldsymbol{\alpha}}(\theta,x)$ in \cite{stock:hal-03292203}. One of the differences is that $\overline{\boldsymbol{\alpha}}(\theta,x)$ does not include any product with $x_{v_0}$ in its entries, as does $\alpha(x, \theta)$.
Finally, a similar statement to Proposition \ref{fundamental prop of alpha-main} and a similar proof can be found in
\cite{stock:hal-03292203}. However, one of the present contributions is to simplify the construction.

Let us now prove Proposition \ref{fundamental prop of alpha-main}, which we restate here. 
\begin{prop}\label{fundamental prop of alpha}
For all $\theta \in \RR^E \times \RR^B$ and all $x \in \RR^{V_0}$,
\[f_{\theta}(x)^T = \alpha(x, \theta) \phi(\theta).\]
\end{prop}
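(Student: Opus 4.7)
The plan is to fix a coordinate $v_L \in V_L$ and prove the identity $f_\theta(x)_{v_L} = \sum_{p \in \mathcal{P}} \alpha_p(x,\theta)\phi_{p,v_L}(\theta)$ by unrolling the layerwise recursion that defines $f_\theta$. The key observation is that the ReLU nonlinearity acts coordinatewise as multiplication by the activation indicator: for any $l \in \llbracket 1, L-1 \rrbracket$ and $v \in V_l$,
\[ f_l(x)_v = a_v(x,\theta)\bigl(b_v + \sum_{v' \in V_{l-1}} w_{v' \to v} f_{l-1}(x)_{v'}\bigr). \]
Thus each layer contributes a factor $a_v(x,\theta)$ that will accumulate into the product $\prod_{l'} a_{v_{l'}}(x,\theta)$ appearing in the definition of $\alpha_p$.

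First, I would write $f_L(x)_{v_L} = b_{v_L} + \sum_{v_{L-1} \in V_{L-1}} w_{v_{L-1} \to v_L} f_{L-1}(x)_{v_{L-1}}$. The stand-alone term $b_{v_L}$ is exactly $\alpha_\beta(x,\theta)\phi_{\beta,v_L}(\theta)$, producing the $\beta$ contribution. Next I would substitute the formula for $f_{L-1}$, then $f_{L-2}$, and so on. At each substitution step the bias term $b_{v_l}$ of the current layer splits off, and the accompanying prefactor
\[ a_{v_l}(x,\theta)\cdot a_{v_{l+1}}(x,\theta)\cdots a_{v_{L-1}}(x,\theta) \;\cdot\; w_{v_l\to v_{l+1}}\cdots w_{v_{L-1}\to v_L} \]
is precisely $\alpha_p(x,\theta)\phi_{p,v_L}(\theta)$ for the path $p = (v_l, v_{l+1}, \ldots, v_{L-1}) \in \mathcal{P}_l$, with the sum being taken over all such paths. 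When the recursion reaches $l=0$ and uses $f_0(x)_{v_0} = x_{v_0}$, the remaining terms index over paths $p = (v_0, \ldots, v_{L-1}) \in \mathcal{P}_0$ and contribute $x_{v_0}\prod_{l'=1}^{L-1} a_{v_{l'}}(x,\theta)\cdot \prod_{l'=0}^{L-1} w_{v_{l'}\to v_{l'+1}}$, again matching $\alpha_p(x,\theta)\phi_{p,v_L}(\theta)$.

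The cleanest way to organize this is by descending induction on $l$, proving the intermediate identity
\[ f_L(x)_{v_L} \;=\; \sum_{p \in \bigcup_{l'\geq l+1} \mathcal{P}_{l'} \cup \{\beta\}} \alpha_p(x,\theta)\phi_{p,v_L}(\theta) \;+\; \sum_{(v_l,\ldots,v_{L-1})} \bigl[\text{prefactor}\bigr]\cdot f_l(x)_{v_l}, \]
where the prefactor gathers the accumulated activations and weights. The base case $l=L$ is immediate from the output-layer formula, and the induction step follows from one substitution of the ReLU recursion. Once $l=0$ is reached, the remaining $f_0(x)_{v_0} = x_{v_0}$ converts the trailing sum into the $\mathcal{P}_0$ contribution.

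The only real obstacle is notational bookkeeping: carefully carrying the indices $v_l, v_{l+1}, \ldots, v_{L-1}$ through the cascade and verifying that the running products of weights and activations exactly reproduce the case split in the definitions of $\phi_{p,v_L}(\theta)$ and $\alpha_p(x,\theta)$ (in particular the distinction between $l=0$ and $l \geq 1$ for $\phi$, and the $\beta$ case coming from the output bias). Since no inequalities, limits, or topology are involved, the proof amounts to an algebraic expansion and does not require any additional idea beyond the recursive formula for $f_\theta$.
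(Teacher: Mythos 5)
Your proposal is correct and follows essentially the same route as the paper: both arguments unroll the defining recursion using the identity $\sigma(z)_v = a_v(x,\theta)\,z_v$ and match the resulting terms path by path with $\alpha_p(x,\theta)\phi_{p,v_L}(\theta)$, the only (cosmetic) difference being that the paper organizes the expansion as an induction on the number of layers $L$ (peeling off the last layer and applying the hypothesis to the sub-network formed by the first $L-1$ layers), whereas you perform a descending induction on the layer index inside the fixed network, carrying the un-expanded $f_l$ in an intermediate identity. Your intermediate identity and the bookkeeping of the bias, weight and activation factors are consistent with the definitions of $\phi$ and $\alpha$, so the argument goes through.
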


\begin{proof} Let us prove first the following expression, for all $v_L \in V_L$:
\begin{multline}\label{expression of f} f_{\theta}(x)_{v_L} = \Bigg(\sum_{\substack{v_{0} \in V_{0} \\ \vdots \\ v_{L-1} \in V_{L-1}}}   x_{v_0} w_{v_0 \rightarrow v_1} \prod_{l = 1}^{L-1} a_{v_l}(x, \theta) w_{v_{l} \rightarrow v_{l+1}} \Bigg) \\ +\Bigg( \sum_{l=1}^{L-1} \sum_{\substack{v_{l} \in V_{l} \\ \vdots \\ v_{L-1} \in V_{L-1}}}   b_{v_l} \prod_{l' = l}^{L-1} a_{v_{l'}}(x, \theta) w_{v_{l'} \rightarrow v_{l'+1}} \Bigg) + b_{v_L}. \quad
\end{multline}

We prove this by induction on the number $L$ of layers of the network.

Initialization ($L=2$). Let $v_2 \in V_2$.

\begin{equation*}
\begin{aligned}
f_{\theta}(x)_{v_2} & = (W_2)_{v_2,:} \  \sigma \left( W_1 x + b_1 \right) + b_{v_2} \\
& = \left( \sum_{v_1 \in V_1} w_{v_1 \rightarrow v_2} \left[ \sigma \left( W_1 x + b_1 \right) \right]_{v_1} \right) + b_{v_2} \\
& = \left( \sum_{v_1 \in V_1} w_{v_1 \rightarrow v_2} \sigma \left( (W_1)_{v_1,:} \ x + b_{v_1} \right) \right) + b_{v_2} \\
& = \left( \sum_{v_1 \in V_1} w_{v_1 \rightarrow v_2} a_{v_1}(x, \theta)\left( \sum_{v_0 \in V_0}  w_{v_0 \rightarrow v_1} x_{v_0} + b_{v_1} \right) \right) + b_{v_2} \\
& = \left(\sum_{\substack{v_0 \in V_0 \\ v_1 \in V_1}} w_{v_1 \rightarrow v_2} a_{v_1}(x, \theta) w_{v_0 \rightarrow v_1} x_{v_0} \right) + \left( \sum_{v_1 \in V_1}w_{v_1 \rightarrow v_2} a_{v_1}(x, \theta) b_{v_1} \right) + b_{v_2} \\
& = \left(\sum_{\substack{v_0 \in V_0 \\ v_1 \in V_1}}x_{v_0} w_{v_0 \rightarrow v_1} a_{v_1}(x, \theta)w_{v_1 \rightarrow v_2} \right)  + \left( \sum_{v_1 \in V_1}b_{v_1} a_{v_1}(x, \theta)w_{v_1 \rightarrow v_2}  \right) 
 + b_{v_2} \\
\end{aligned}
\end{equation*}
which proves \eqref{expression of f}, when $L=2$.

Now let $L \geq 3$ and suppose \eqref{expression of f} holds for all ReLU networks with $L-1$ layers. Let us consider a network with $L$ layers.

Let us denote by $g_{\theta}(x)$ the output of the $L-1$ first layers of the network pre-activation (before applying the ReLUs of the layer $L-1$). The function $g_\theta$ is that of a ReLU network with $L-1$ layers, and we have
\begin{equation*}
\begin{aligned}
f_{\theta}(x) & = W_L \sigma(g_{\theta}(x)) + b_L. 
\end{aligned}
\end{equation*}
Let $v_L \in V_L$. We thus have
\begin{equation}\label{f_theta in function of g_theta}
\begin{aligned}
f_{\theta}(x)_{v_L} & = \sum_{v_{L-1} \in V_{L-1}} w_{v_{L-1} \rightarrow v_L} \sigma(g_{\theta}(x)_{v_{L-1}})  + b_{v_L}.
\end{aligned}
\end{equation}

By the induction hypothesis, for all $v_{L-1} \in V_{L-1}$, $g_{\theta}(x)_{v_{L-1}}$ can be expressed with \eqref{expression of f}. Considering that $\sigma(g_{\theta}(x)_{v_{L-1}}) = a_{v_{L-1}}(x, \theta)g_{\theta}(x)_{v_{L-1}}$ and replacing $g_{\theta}(x)_{v_{L-1}}$ by its expression using \eqref{expression of f}, \eqref{f_theta in function of g_theta} becomes
\begin{align*}
f_{\theta}(x)_{v_L} & = \sum_{v_{L-1} \in V_{L-1}} w_{v_{L-1} \rightarrow v_L} a_{v_{L-1}}(x, \theta) \Bigg[ \Bigg(\sum_{\substack{v_{0} \in V_{0} \\ \vdots \\ v_{L-2} \in V_{L-2}}}   x_{v_0} w_{v_0 \rightarrow v_1} \prod_{l = 1}^{L-2} a_{v_l}(x, \theta) w_{v_{l} \rightarrow v_{l+1}} \Bigg)  \\
 & \qquad  + \Bigg( \sum_{l=1}^{L-2} \sum_{\substack{v_{l} \in V_{l} \\ \vdots \\ v_{L-2} \in V_{L-2}}}   b_{v_l} \prod_{l' = l}^{L-2} a_{v_{l'}}(x, \theta) w_{v_{l'} \rightarrow v_{l'+1}} \Bigg) + b_{v_{L-1}}\Bigg]  + b_{v_L} \\
& = \Bigg( \sum_{\substack{v_{0} \in V_{0} \\ \vdots \\ v_{L-1} \in V_{L-1}}} w_{v_{L-1} \rightarrow v_L} a_{v_{L-1}}(x, \theta)  x_{v_0} w_{v_0 \rightarrow v_1} \prod_{l = 1}^{L-2} a_{v_l}(x, \theta) w_{v_{l} \rightarrow v_{l+1}} \Bigg)  \\
 & \qquad  + \Bigg( \sum_{l=1}^{L-2} \sum_{\substack{v_{l} \in V_{l} \\ \vdots \\ v_{L-1} \in V_{L-1}}} w_{v_{L-1} \rightarrow v_L} a_{v_{L-1}}(x, \theta)  b_{v_l} \prod_{l' = l}^{L-2} a_{v_{l'}}(x, \theta) w_{v_{l'} \rightarrow v_{l'+1}} \Bigg) \\ 
 & \qquad+ \Bigg( \sum_{v_{L-1} \in V_{L-1}} w_{v_{L-1} \rightarrow v_L} a_{v_{L-1}}(x, \theta) b_{v_{L-1}} \Bigg) + b_{v_L} \\
&  = \Bigg(\sum_{\substack{v_{0} \in V_{0} \\ \vdots \\ v_{L-1} \in V_{L-1}}}   x_{v_0} w_{v_0 \rightarrow v_1} \prod_{l = 1}^{L-1} a_{v_l}(x, \theta) w_{v_{l} \rightarrow v_{l+1}}  \Bigg)\\ &  \qquad + \Bigg(\sum_{l=1}^{L-1} \sum_{\substack{v_{l} \in V_{l} \\ \vdots \\ v_{L-1} \in V_{L-1}}}  b_{v_l} \prod_{l' = l}^{L-1} a_{v_{l'}}(x, \theta) w_{v_{l'} \rightarrow v_{l'+1}}   \Bigg) + b_{v_L}, \\
\end{align*}
which proves \eqref{expression of f} holds for ReLU networks with $L$ layers. This ends the induction, and we conclude that \eqref{expression of f} holds for all ReLU networks.

We can now use this expression to prove Proposition \ref{fundamental prop of alpha}. The first sum in \eqref{expression of f} is taken over all the paths $p = (v_0, \dots, v_{L-1}) \in \mathcal{P}_{0}$, and each summand can be written as
\[ x_{v_0} w_{v_0 \rightarrow v_1} \prod_{l = 1}^{L-1} a_{v_l}(x, \theta) w_{v_{l} \rightarrow v_{l+1}} = \left( x_{v_0} \prod_{l=1}^{L-1} a_{v_l}(x, \theta)  \right) \left( \prod_{l = 0}^{L-1}  w_{v_{l} \rightarrow v_{l+1}}\right) = \alpha_p(x, \theta) \phi_{p, v_L}(\theta).\]
For all $l \in \llbracket 1, L-1 \rrbracket$, the inner sum of the double sum in \eqref{expression of f} is taken over all the paths $p=(v_l, \dots, v_{L-1}) \in \mathcal{P}_l $, and each summand can be written as
\[  b_{v_l} \prod_{l' = l}^{L-1} a_{v_{l'}}(x, \theta) w_{v_{l'} \rightarrow v_{l'+1}}  = \left( \prod_{l'=l}^{L-1} a_{v_{l'}}(x, \theta)  \right) \left(b_{v_{l}}\prod_{l' = l}^{L-1}  w_{v_{l'} \rightarrow v_{l'+1}} \right) = \alpha_p(x, \theta) \phi_{p, v_L}(\theta).\]
And finally, we can also write
\[b_{v_L} = \alpha_\beta (x, \theta) \phi_{\beta, v_L}(\theta). \]

Joining all these sums and denoting $\phi_{: , v_L}(\theta) = (\phi_{p, v_L}(\theta))_{p \in \mathcal{P}}  \in \RR^{\mathcal{P}}$, we have
\[f_{\theta}(x)_{v_L} = \sum_{p \in \mathcal{P}} \alpha_p(x, \theta) \phi_{p, v_L}(\theta) = \alpha(x, \theta) \phi_{: , v_L}(\theta),\]
so in other words,
\[f_{\theta}(x)^T =  \alpha(x, \theta) \phi(\theta).\]

\end{proof}

We restate here and prove Proposition \ref{alpha_X is piecewise-constant-main}.

\begin{prop}\label{alpha_X is piecewise-constant}
For all $n \in \NN^*$, for all $X \in \RR^{n \times V_0}$, the mapping
\[\fonction{\alpha_X}{\Par}{\RR^{n \times \mathcal{P}}}{\theta}{\alpha(X, \theta)}\]
appearing in \eqref{fundamental prop of alpha-matrix version-main} is piecewise-constant, with a finite number of pieces.
Furthermore, the boundary of each piece has Lebesgue measure zero. We call $\Delta_X$ the union of all the boundaries. The set $\Delta_X$ is closed and has Lebesgue measure zero.
\end{prop}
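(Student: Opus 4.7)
\medskip

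\noindent\textbf{Proof plan for Proposition \ref{alpha_X is piecewise-constant}.}

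The plan is to track, for each input $x^i$ and each hidden neuron $v \in V_l$, the pre-activation
\[
g_{l,i,v}(\theta) := \bigl(W_l f_{l-1}(x^i) + b_l\bigr)_v,
\]
viewed as a function of $\theta \in \Par$. Because the $a_v(x^i,\theta)$ are defined as the sign indicators of the $g_{l,i,v}(\theta)$, and $\alpha(X,\theta)$ is a matrix whose entries are products of $a_v$'s and scalars $x^i_{v_0}$, any change of value of $\alpha_X$ at $\theta$ must come from some $g_{l,i,v}$ flipping sign at $\theta$. I will therefore define
\[
\Delta_X := \bigcup_{l=1}^{L-1} \bigcup_{i=1}^{n} \bigcup_{v \in V_l} \bigl\{ \theta \in \Par : g_{l,i,v}(\theta) = 0 \bigr\},
\]
and argue that (i) $\Delta_X$ is closed and Lebesgue-negligible, and (ii) it contains the boundary of every piece.

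First I would observe that each $g_{l,i,v}$ is continuous on $\Par$, being a composition of affine maps and the continuous map $\sigma$; hence each $\{g_{l,i,v}=0\}$ is closed, and the finite union $\Delta_X$ is closed. The key negligibility argument exploits the bias structure: $f_{l-1}(x^i)$ depends only on the weights and biases of the first $l-1$ layers, so the bias $b_v \in \mathbb{R}$ does not appear in $f_{l-1}(x^i)$; consequently, for fixed values of all other coordinates of $\theta$, the map $b_v \mapsto g_{l,i,v}(\theta)$ is affine with slope $1$, and therefore vanishes at exactly one point. Applying Fubini's theorem with respect to the coordinate $b_v$ shows that $\{g_{l,i,v}=0\}$ has Lebesgue measure zero; a finite union then gives the same property for $\Delta_X$.

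Next I would define the pieces as the level sets of $\alpha_X$. Since each $a_v(x^i,\theta) \in \{0,1\}$ and there are only finitely many triples $(l,i,v)$, the map $\alpha_X$ takes only finitely many values, so these level sets form a finite partition of $\Par$. To show that the boundary of any piece $R = \alpha_X^{-1}(M)$ lies in $\Delta_X$, I would argue by contraposition: if $\theta \notin \Delta_X$, all $g_{l,i,v}(\theta)$ are nonzero, so by continuity each $g_{l,i,v}$ keeps its sign on some open ball around $\theta$; hence $\alpha_X$ is constant on this ball, which forces the ball to lie in the single piece containing $\theta$, so $\theta$ is in the interior of that piece and not on the boundary of any other.

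The main technical point is really the Fubini step, and the only subtlety there is to confirm that the bias $b_v$ of a neuron $v \in V_l$ does not appear in $f_{l-1}(x^i)$; this is immediate from the recursive definition of $f_l$, since only the weights $W_{l'}$ and biases $b_{l'}$ with $l' \le l-1$ enter $f_{l-1}$. Everything else (continuity, closedness, the finite partition, and the inclusion of boundaries in $\Delta_X$) is straightforward from these ingredients.
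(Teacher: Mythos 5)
Your proof is correct, but it reaches the measure-zero conclusion by a genuinely different route than the paper. The paper first restricts to the finitely many regions on which the lower-layer activation pattern $(a_{v''}(x^i,\cdot))_{v''\in V_1\cup\dots\cup V_{l-1}}$ is constant, so that on each such region the pre-activation of a neuron $v\in V_l$ is a \emph{polynomial} in $\theta$; since $b_v$ enters with coefficient $1$ this polynomial is nonconstant, and the relevant portion of boundary is contained in its zero level set, which is Lebesgue-null. You avoid this intermediate decomposition entirely: you work with the global zero set of the continuous (but non-polynomial) pre-activation $g_{l,i,v}$ and obtain negligibility directly by Fubini, slicing along the coordinate $b_v$, in which $g_{l,i,v}$ is affine with slope $1$ --- the same nondegeneracy observation as in the paper, exploited more elementarily, with no appeal to null level sets of nonconstant polynomials. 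Your topological step (off the union of the zero sets each $g_{l,i,v}$ keeps its sign on a ball, so $\alpha_X$ is locally constant, hence $\theta$ lies in the interior of its own piece and in the closure of no other) correctly yields that every boundary is contained in your $\Delta_X$. Two cosmetic points: your $\Delta_X$ is a superset of the paper's, which is defined as the union of the boundaries themselves, so to conclude the statement verbatim you should add the one-line remark that the union of the boundaries is a finite union of closed sets (hence closed) and is null as a subset of your null set; and the phrase ``composition of affine maps and $\sigma$'' is slightly imprecise, since $\theta\mapsto W_l f_{l-1}(x^i)+b_l$ is polynomial rather than affine in $\theta$ for $l\geq 2$, though the continuity you actually need is immediate by induction.
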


\begin{proof}
Let us first notice that for any $i \in \lb 1, n \rb$, for any $l \in \lb 1, L-1 \rb$,
\[\left( a_v(x^i,\theta)\right)_{v \in V_1 \cup \dots \cup V_{l-1}} \in \{ 0, 1 \}^{V_1 \cup \dots \cup V_{l-1}} \]
takes at most $2^{N_1 + \dots + N_{l-1}}$ distinct values, so the mapping $ \theta \mapsto \left( a_v(x^i,\theta)\right)_{v \in V_1 \cup \dots \cup V_{l-1}} $ is piecewise constant, with a finite number of pieces.

Let $i \in \lb 1, n \rb$. Let $l \in \lb 1, L-1 \rb$ and $v \in V_l$. Recall the definition of $f_{l-1}$, as given in Section \ref{ReLU networks-sec-main}. The function $\theta \rightarrow a_v(x^i, \theta)$ takes only two values, $1$ or $0$, and its values are determined by the sign of 
\begin{equation}\label{polynomial equation relu}
\sum_{v' \in V_{l-1}} w_{v' \rightarrow v} f_{l-1}(x^i)_{v'} + b_v.
\end{equation}
For all $v' \in V_{l-1}$, the value of $f_{l-1}(x^i)_{v'}$ depends on $\theta$. On a piece $P \subset \Par $ such that $\left(a_{v''}(x^i, \theta)\right)_{v'' \in V_1 \cup \dots \cup V_{l-1}}$ is constant, this dependence is polynomial. Thus, on $P$, the value of \eqref{polynomial equation relu} is a polynomial function of $\theta$, and since the coefficient applied to $b_v$ is equal to $1$, the corresponding polynomial is non constant. 
Since the values of $a_v(x^i, \theta)$ are determined by the sign of \eqref{polynomial equation relu}, inside $P$, the boundary between $\{ \theta \in \Par, a_v(x^i, \theta) = 0 \}$ and $\{ \theta \in \Par, a_v(x^i, \theta) = 1\}$ is included in the set of $\theta$ for which  \eqref{polynomial equation relu} equals $0$. This piece of boundary is thus contained in a level set of a non constant polynomial, whose Lebesgue measure is zero.

Since there is a finite number of pieces $P$, the Lebesgue measure of the boundary between $\{ \theta \in \Par, a_v(x^i, \theta) = 0 \}$ and $\{ \theta \in \Par, a_v(x^i, \theta) = 1\}$, which is contained in the union of the boundaries on all the pieces $P$, is thus equal to $0$.

Since this is true for all $l \in \lb 1, L-1 \rb$ and all $v \in V_l$, the boundary of a piece over which 
$\left( a_v(x^i,\theta)\right)_{v \in V_1 \cup \dots \cup V_{L-1}}$
is constant also has Lebesgue measure zero.

Now since, for all $x^i$, the value of $\alpha(x^i, \theta)$ only depends on 
$\left( a_v(x^i,\theta)\right)_{v \in V_1 \cup \dots \cup V_{L-1}}  $
and since $\alpha_X(\theta)$ is a matrix whose lines are the vectors $\alpha(x^i, \theta)$, we can conclude that $\fonction{\alpha_X}{\Par}{\RR^{n \times \mathcal{P}}}{\theta}{\alpha(X, \theta)}$
is piecewise-constant, with a finite number of pieces, and that the boundary of each piece has Lebesgue measure zero. 

A boundary is, by definition, closed. Finally, a finite union of closed sets with Lebesgue measure $0$, as $\Delta_X$ is, is closed and has Lebesgue measure $0$.
\end{proof}

For convenience, we introduce the two following notations. Let $l \in \lb 0, L \rb$. For any $l' \in \lb 0, l \rb$ and any path $p_i = (v_{l'}, \dots, v_l) \in V_{l'} \times \dots \times V_l$, we denote
\begin{equation}\label{definition p_i}
    \theta_{p_i} = \begin{cases}\prod_{k=0}^{l-1} w_{v_{k} \rightarrow v_{k+1}}& \text{if } l' =0 \\
b_{l'}\prod_{k=l'}^{l-1} w_{v_{k} \rightarrow v_{k+1}}& \text{if } l' \geq 1, \end{cases}
\end{equation}
where as a classic convention, an empty product is equal to $1$. In particular, if $l=0$, for any $p_i = (v_0) \in V_0$, we have $\theta_{p_i} = 1$.
For any path $p_o = (v_l, \dots, v_L) \in V_l \times \dots \times V_L$, we denote
\begin{equation}\label{definition p_o}
    \theta_{p_o} = \prod_{k=l}^{L-1} w_{v_{k} \rightarrow v_{k+1}},
\end{equation}
with again the convention that an empty product is equal to $1$, so if $l=L$, $\theta_{p_o} = 1$. 

Some attention must be paid to the fact that for any $l' \in \lb 1, L \rb$, if we take $p_i$ in the case $l=L$ and $p_o$ in the case $l=l'$, it is possible to have 
\[p_i = (v_{l'}, \dots, v_L) = p_o,\]
but in that case we DO NOT have $\theta_{p_i} = \theta_{p_o}$,
since $\theta_{p_i} = b_{l'}\prod_{k=l'}^{L-1} w_{v_{k} \rightarrow v_{k+1}}$ and $\theta_{p_o} = \prod_{k=l'}^{L-1} w_{v_{k} \rightarrow v_{k+1}}$. We will always denote the paths $p_i$ and $p_o$ with an $i$ (as in `input') or an $o$ (as in `output') to clarify which definition is used.

When considering another parameterization $\tilde{\theta} \in \Par$, we denote by $\tilde{\theta}_{p_i}$ and $\tilde{\theta}_{p_o}$ the corresponding objects. 

We establish different characterizations of the set $S$ defined in Section \ref{Rescaling-sec-main} that will be useful in the proofs. As mentioned in Section \ref{Rescaling-sec-main}, the subset of parameters $\ParS$ is close to the notion of `admissible' parameter in \cite{stock:hal-03292203}, but is slightly larger since the condition $w_{\bullet \rightarrow v} \neq 0$ is replaced by $(w_{\bullet \rightarrow v}, b_v) \neq (0,0)$, for each hidden neuron $v$.

\begin{prop}\label{nonzero input and output paths}
Let $\theta \in \Par$. The following statements are equivalent.

\item[i)] $\theta \in \ParS$.

\item[ii)] For all  $l \in \lb 1 , L-1 \rb$ and all $v_l \in V_l$, there exist $l' \in \lb 0, l \rb$, a path $p_i = (v_{l'}, \dots, v_l) \in V_{l'} \times \dots \times V_l$ and a path $p_o=(v_l, \dots, v_L) \in V_l \times \dots \times V_L$ such that
\[\theta_{p_i} \neq 0 \quad \text{and} \quad \theta_{p_o} \neq 0.\]

\item[iii)] For all  $l \in \lb 1 , L-1 \rb$ and all $v_l \in V_l$, there exist $l' \in \lb 0, l \rb$, a path $p = (v_{l'}, \dots, v_l, \dots, v_{L-1}) \in \mathcal{P}_{l'}$ and $ v_L \in V_L$ such that
\[\phi_{p,v_L}(\theta) \neq 0.\]
\end{prop}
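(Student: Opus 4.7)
I will prove the chain of equivalences by establishing i) $\Leftrightarrow$ ii) and then ii) $\Leftrightarrow$ iii). The core object is the factorization of a full path-bias product into an ``input'' piece ending at a given hidden neuron and an ``output'' piece starting from it.

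For i) $\Rightarrow$ ii), fix any hidden neuron $v_l \in V_l$. Since $\theta \notin S$, we have $w_{v_l \rightarrow \bullet} \neq 0$, so we can pick $v_{l+1} \in V_{l+1}$ with $w_{v_l \rightarrow v_{l+1}} \neq 0$. If $v_{l+1}$ is still hidden, I iterate (using $w_{v_{l+1} \rightarrow \bullet} \neq 0$) until reaching $V_L$; the resulting path $p_o$ satisfies $\theta_{p_o} \neq 0$. To build $p_i$, I proceed backwards from $v_l$: at the current hidden neuron $v_k$, since $(w_{\bullet \rightarrow v_k}, b_{v_k}) \neq (0,0)$, either $b_{v_k} \neq 0$, in which case I stop with $l' = k$ and get $\theta_{p_i} = b_{v_k}\prod w \neq 0$ (or just $\theta_{p_i} = b_{v_l}$ if $k=l$), or some $v_{k-1}$ yields $w_{v_{k-1} \rightarrow v_k} \neq 0$ and I continue. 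The recursion either hits a non-zero bias at some layer $l' \geq 1$, or reaches $V_0$ (where by convention $\theta_{p_i} = \prod w$ with no bias factor), and in both cases $\theta_{p_i} \neq 0$.

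For ii) $\Rightarrow$ i), I argue by contrapositive: if $\theta \in S$, some hidden neuron $v$ satisfies $w_{v \rightarrow \bullet} = 0$ or $(w_{\bullet \rightarrow v}, b_v) = (0,0)$. In the first case, every output path $p_o$ starting at $v$ contains a first edge of weight zero, so $\theta_{p_o} = 0$; this breaks ii) for $v_l = v$. In the second case, every input path $p_i = (v_{l'}, \dots, v)$ either has $l' = l$, giving $\theta_{p_i} = b_v = 0$, or $l' < l$, in which case $\theta_{p_i}$ contains the factor $w_{v_{l-1} \rightarrow v} = 0$; again ii) fails.

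The equivalence ii) $\Leftrightarrow$ iii) rests on the identity: for any path $p = (v_{l'}, \dots, v_l, \dots, v_{L-1}) \in \mathcal{P}_{l'}$ passing through $v_l$ and any $v_L \in V_L$, splitting $p$ at $v_l$ as $p_i = (v_{l'}, \dots, v_l)$ and $p_o = (v_l, v_{l+1}, \dots, v_L)$ gives
\[
\phi_{p, v_L}(\theta) = \theta_{p_i} \cdot \theta_{p_o}.
\]
This is verified by unfolding the definitions \eqref{definition p_i}, \eqref{definition p_o} and the definition of $\phi$ in both cases $l'=0$ and $l'\geq 1$: the weight product $\prod_{k=l'}^{L-1} w_{v_k \rightarrow v_{k+1}}$ splits cleanly at index $l$, and the bias factor $b_{v_{l'}}$ (when $l' \geq 1$) is carried entirely by $\theta_{p_i}$. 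From this identity, ii) $\Rightarrow$ iii) by concatenating the given $p_i, p_o$ into a path reaching $V_{L-1}$ and reading off $v_L$ from $p_o$; conversely iii) $\Rightarrow$ ii) by splitting $p \cup \{v_L\}$ at $v_l$, since a product of two reals is non-zero only if both factors are.

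The argument itself is not deep; the main care needed is purely bookkeeping, to keep the $l'=0$ case (no bias factor) and the $l' \geq 1$ case (leading bias factor $b_{v_{l'}}$) aligned across the three objects $\theta_{p_i}$, $\theta_{p_o}$, and $\phi_{p, v_L}(\theta)$ so that the factorization identity holds without ambiguity, and to remember that in ii) the trivial path $p_i = (v_l)$ with $l' = l$ is allowed and corresponds to the case where $b_{v_l} \neq 0$ alone suffices.
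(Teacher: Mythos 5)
Your proof is correct and follows essentially the same approach as the paper: the same backward/forward path constructions for i)~$\Rightarrow$~ii), the same factorization $\phi_{p,v_L}(\theta)=\theta_{p_i}\theta_{p_o}$ linking ii) and iii), and the same contrapositive argument via a degenerate neuron. The only difference is organizational --- you prove the two biconditionals i)~$\Leftrightarrow$~ii) and ii)~$\Leftrightarrow$~iii) (applying the contrapositive at the level of ii)), whereas the paper closes a cycle i)~$\Rightarrow$~ii)~$\Rightarrow$~iii)~$\Rightarrow$~i) and applies the contrapositive to iii); both are equally valid.
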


\begin{proof} Let us show successively that $i) \Rightarrow ii)$, $ii) \Rightarrow iii)$ and $iii) \Rightarrow i)$.
\item[$i) \rightarrow ii)$] Let $\theta \in \ParS$. Let us show $ii)$ holds.

Let $l \in \lb 1 , L \rb$ and $v_l \in V_l$. To form a path $p_i$ satisfying the condition, we follow the procedure:
\begin{algorithmic}
\STATE $p_i \gets (v_l)$
\STATE $k \gets l$
\WHILE {$k \geq 1$ and $b_k =0$} 
  \STATE $\exists v_{k-1} \in V_{k-1}, w_{v_{k-1} \rightarrow v_k} \neq 0$
  \STATE $p_i \gets (v_{k-1},p_i)$
  \STATE $k \gets k-1$
\ENDWHILE
\STATE $l' \gets k$
\end{algorithmic}
The existence of $v_{k-1}$ in the loop is guaranteed by the fact that $\theta \not\in S$ and $b_k=0$ in the condition of the while loop. In the end, we obtain a path $p_i = (v_{l'}, \dots, v_l)$ with either $l' > 0$ and $b_{l'} \neq 0$, or $l' =0$. In both cases, we have by construction
\[\theta_{p_i} \neq 0.\]

We do similarly the other way to form a path $p_o = (v_l, \dots, v_L)$. We follow the procedure:

\begin{algorithmic}
\STATE $p_o \gets (v_l)$
\STATE $k \gets l$
\WHILE {$k \leq L-1$} 
  \STATE $\exists v_{k+1} \in V_{k+1}, w_{v_{k} \rightarrow v_{k+1}} \neq 0$
  \STATE $p_o \gets (p_o, v_{k+1})$
  \STATE $k \gets k+1$
\ENDWHILE
\end{algorithmic}
The existence of $v_{k+1}$ in the loop is guaranteed by the fact that $\theta \not\in S$. In the end, we obtain a path $p_o = (v_l, \dots, v_L)$ satisfying by construction
\[\theta_{p_o} \neq 0.\]

\item[$ii) \rightarrow iii)$] Let $l\in \lb 1, L-1 \rb$ and $v_l \in V_l$. There exist $l' \in \lb 0, l \rb$, a path $p_i = (v_{l'}, \dots, v_l) \in V_{l'} \times  \dots \times V_l$ and a path $p_o=(v_l, \dots, v_L) \in V_l \times \dots \times V_L$ such that
\[\theta_{p_i} \neq 0 \quad \text{and} \quad \theta_{p_o} \neq 0.\]
Denoting $p=(v_{l'}, \dots, v_l, \dots, v_{L-1})$, we have
\[\phi_{p, v_L}(\theta) = \theta_{p_i} \theta_{p_o} \neq 0.\]

\item[$iii) \rightarrow i)$] Let us show the contrapositive: let $\theta \in S$, and let us show the statement $iii)$ is not true. Indeed, if $\theta \in S$, there exist $l \in \lb 1, L-1 \rb$ and $v_l \in V_l$ such that $(w_{\bullet \rightarrow v_l}, b_{v_l})=(0,0)$ or $w_{v_l \rightarrow \bullet} = 0$. Consider a path $p=(v_{l'}, \dots, v_l, \dots, v_{L-1})$ and $v_L \in V_L$. We have
\[\phi_{p,v_L}(\theta) = \begin{cases}b_{v_{l'}}w_{v_{l'} \rightarrow v_{l'+1}} \dots w_{v_{l-1} \rightarrow v_{l}} w_{v_{l} \rightarrow v_{l+1}} \dots w_{v_{L-1} \rightarrow v_L} & \text{if } l' \geq 1 \\ 
w_{v_{0} \rightarrow v_{1}} \dots w_{v_{l-1} \rightarrow v_{l}} w_{v_{l} \rightarrow v_{l+1}} \dots w_{v_{L-1} \rightarrow v_L} & \text{if } l'=0. \end{cases}\]
If $(w_{\bullet \rightarrow v_l}, b_{v_l})=(0,0)$, either $l'=l$ and $b_{v_{l'}} = 0$ so $\phi_{p,v_L}(\theta) = 0$, or $l' < l$ and since $w_{v_{l-1} \rightarrow v_l} = 0$, we have $\phi_{p,v_L}(\theta) = 0$. 

If $w_{v_l \rightarrow \bullet} = 0$, $w_{v_l \rightarrow v_{l+1}} = 0$ so $\phi_{p,v_L}(\theta) = 0$.  Thus $iii)$ is not satisfied. 
\end{proof}

We restate and prove Proposition \ref{equivalence implies equal phi-main}.
\begin{prop}\label{equivalence implies equal phi}
For all $\theta, \tilde{\theta} \in \Par$, we have
\[\theta \overset{R}{\sim} \tilde{\theta} \quad \Longrightarrow \quad \phi(\theta) = \phi( \tilde{\theta}),\]
and thus in particular
\[\theta \sim \tilde{\theta} \quad \Longrightarrow \quad \phi(\theta) = \phi(\tilde{\theta}).\]
\end{prop}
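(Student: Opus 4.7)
The plan is to show directly that each coordinate $\phi_{p,v_L}(\theta)$ equals $\phi_{p,v_L}(\tilde{\theta})$, by plugging the rescaling relations \eqref{equivalence def-2-main} into the product formulas defining $\phi$, and observing that the scaling factors telescope. The three cases of the definition of $\phi$ ($p \in \mathcal{P}_0$, $p \in \mathcal{P}_l$ with $l \geq 1$, and $p = \beta$) are handled separately but all by the same mechanism. The final implication, $\theta \sim \tilde{\theta} \Rightarrow \phi(\theta) = \phi(\tilde{\theta})$, then follows as a corollary since $\sim$ is by definition a special case of $\rsim$.

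Concretely, I fix a family $(\lambda^0,\dots,\lambda^L)$ witnessing $\theta \rsim \tilde{\theta}$, with $\lambda^0 = \One_{V_0}$ and $\lambda^L = \One_{V_L}$. For a path $p = (v_l, \dots, v_{L-1}) \in \mathcal{P}_l$ with $l \geq 1$ and any $v_L \in V_L$, I substitute using \eqref{equivalence def-2-main}:
\[
\phi_{p,v_L}(\theta)
= b_{v_l}\prod_{l'=l}^{L-1} w_{v_{l'}\to v_{l'+1}}
= \lambda^l_{v_l}\tilde{b}_{v_l}\prod_{l'=l}^{L-1}\frac{\lambda^{l'+1}_{v_{l'+1}}}{\lambda^{l'}_{v_{l'}}}\tilde{w}_{v_{l'}\to v_{l'+1}}.
\]
The key step is to recognize the telescoping product
\[
\prod_{l'=l}^{L-1}\frac{\lambda^{l'+1}_{v_{l'+1}}}{\lambda^{l'}_{v_{l'}}} = \frac{\lambda^L_{v_L}}{\lambda^l_{v_l}},
\]
so that after cancellation and using $\lambda^L_{v_L}=1$, the right-hand side collapses to $\tilde{b}_{v_l}\prod_{l'=l}^{L-1}\tilde{w}_{v_{l'}\to v_{l'+1}} = \phi_{p,v_L}(\tilde{\theta})$. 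The case $l=0$ is identical except that the starting factor $\lambda^0_{v_0}$ appears instead of $\lambda^l_{v_l}$, and one uses $\lambda^0 = \One_{V_0}$ to conclude. Finally, for $p = \beta$, one directly has $\phi_{\beta,v_L}(\theta) = b_{v_L} = \lambda^L_{v_L}\tilde{b}_{v_L} = \tilde{b}_{v_L} = \phi_{\beta,v_L}(\tilde{\theta})$ because $\lambda^L = \One_{V_L}$.

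There is no real obstacle here; the proof is a direct verification. The only subtlety worth being careful about is the bookkeeping of indices in the telescoping product and ensuring that the boundary scalings $\lambda^0$ and $\lambda^L$, which are forced to be $\One$ by the definition of $\rsim$, are exactly what cancels the remaining endpoints in each of the three cases. The second implication $\theta \sim \tilde{\theta} \Rightarrow \phi(\theta) = \phi(\tilde{\theta})$ is then immediate from Definition \ref{def:rescaling}, since $\theta \sim \tilde{\theta}$ implies $\theta \rsim \tilde{\theta}$.
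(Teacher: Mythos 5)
Your proof is correct and is essentially the same as the paper's: both substitute the rescaling relations \eqref{equivalence def-2-main} into the path-product formulas for $\phi$, let the $\lambda$ factors telescope, and use $\lambda^0 = \One_{V_0}$, $\lambda^L = \One_{V_L}$ to cancel the endpoints in each of the three cases, with the second implication following since $\sim$ implies $\rsim$. No gaps.
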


\begin{proof}
Let $\theta, \tilde{\theta} \in \Par$ such that $\theta \overset{R}{\sim} \tilde{\theta}$. There exists a family $(\lambda^0, \dots, \lambda^L) \in (\RR^*)^{V_0} \times \dots \times (\RR^*)^{V_L}$, with $\lambda^0 = \One_{V_0}$ and $\lambda^L = \One_{V_L}$, such that for all $l \in \lb 1 , L \rb$, for all $(v_{l-1}, v_l) \in V_{l-1} \times V_l$, \eqref{equivalence def-2-main} holds.
We consider first a path $\p = (v_0, \dots, v_{L-1}) \in \mathcal{P}_0$ and $v_L \in V_L$. Using \eqref{equivalence def-2-main} and the fact that $\lambda^{0}_{v_{0}} = \lambda^L_{v_L} =1$, we have 
\[\phi_{p,v_L}(\theta) =\prod_{l=1}^{L} w_{v_{l-1} \rightarrow v_{l}} = \prod_{l=1}^{L} \frac{\lambda^l_{v_l}}{\lambda^{l-1}_{v_{l-1}}} \tilde{w}_{v_{l-1} \rightarrow v_l} = \frac{\lambda^L_{v_L}}{\lambda^{0}_{v_{0}}} \prod_{l=1}^{L}  \tilde{w}_{v_{l-1} \rightarrow v_l} = \phi_{p,v_L}(\tilde{\theta}).\]
Similarly, for $l \in \llbracket 1, L-1 \rrbracket$ and a path $p =(v_l, \dots, v_{L-1}) \in \mathcal{P}_l$, and for all $v_L \in V_L$, we have, using \eqref{equivalence def-2-main} and the fact that $\lambda^L_{v_L}=1$,
\begin{equation*}
\begin{aligned}
\phi_{p,v_L}(\theta) & = b_{v_l} \prod_{l'=l+1}^{L} w_{v_{l'-1} \rightarrow v_{l'}} = \lambda^l_{v_l} \tilde{b}_{v_l} \prod_{l'=l+1}^{L}  \frac{\lambda^{l'}_{v_{l'}}}{\lambda^{l'-1}_{v_{l'-1}}} \tilde{w}_{v_{l'-1} \rightarrow v_{l'}}  = \lambda^L_{v_L} \tilde{b}_{v_l} \prod_{l'=l+1}^{L}  \tilde{w}_{v_{l'-1} \rightarrow v_{l'}} \\
& = \phi_{p,v_L}(\tilde{\theta}).
\end{aligned}
\end{equation*}
Finally, for $p = \beta$ and $v_L \in V_L$, we have
\[\phi_{p,v_L}(\theta) = b_{v_L} = \lambda^L_{v_L} \tilde{b}_{v_L} = \tilde{b}_{v_L} =  \phi_{p,v_L}(\tilde{\theta}).\]

This shows $\phi(\theta) = \phi(\tilde{\theta})$.

For the second implication, we simply use the fact that if $\theta \sim \tilde{\theta}$, in particular, $\theta \overset{R}{\sim} \tilde{\theta}$.
\end{proof}

\begin{coro}\label{ParS is stable by r equivalence}
The set $\ParS$ is stable by rescaling equivalence: if $\theta \in \ParS$, and $\tilde{\theta} \in \Par$ satisfies $\theta \overset{R}{\sim} \tilde{\theta}$, then $\tilde{\theta} \in \ParS$.
\end{coro}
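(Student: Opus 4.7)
The plan is to combine the equivalent characterization of $\Par \backslash S$ provided by Proposition \ref{nonzero input and output paths} with the invariance of $\phi$ under $\rsim$ given by Proposition \ref{equivalence implies equal phi}. The characterization $iii)$ of membership in $\Par \backslash S$ depends only on $\phi(\theta)$, so it automatically descends through any rescaling equivalence.

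More concretely, I would start from $\theta \in \Par \backslash S$ and apply the implication $i) \Rightarrow iii)$ of Proposition \ref{nonzero input and output paths}: for every hidden neuron $v_l \in V_l$ with $l \in \lb 1, L-1 \rb$, there exist $l' \in \lb 0, l \rb$, a path $p = (v_{l'}, \dots, v_l, \dots, v_{L-1}) \in \mathcal{P}_{l'}$ passing through $v_l$, and some $v_L \in V_L$, such that $\phi_{p,v_L}(\theta) \neq 0$. Next, since $\theta \rsim \tilde\theta$, Proposition \ref{equivalence implies equal phi} yields $\phi(\tilde\theta) = \phi(\theta)$, so in particular $\phi_{p,v_L}(\tilde\theta) = \phi_{p,v_L}(\theta) \neq 0$ for the same $p$ and $v_L$. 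Thus $\tilde\theta$ satisfies the characterization $iii)$ of Proposition \ref{nonzero input and output paths}, and the reverse implication $iii) \Rightarrow i)$ gives $\tilde\theta \in \Par \backslash S$.

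There is no real obstacle here: the content of the corollary is entirely absorbed by the two preceding propositions. The only mild subtlety is remembering that $iii)$ is stated per hidden neuron $v_l$ and requires a path \emph{through} $v_l$, but this is exactly the form in which Proposition \ref{nonzero input and output paths} produces the nonzero coordinate of $\phi$, so the transfer from $\theta$ to $\tilde\theta$ is immediate once $\phi(\theta) = \phi(\tilde\theta)$ is in hand.
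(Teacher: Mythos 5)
Your proposal is correct and follows essentially the same argument as the paper: invoke Proposition \ref{equivalence implies equal phi} to get $\phi(\tilde\theta)=\phi(\theta)$, then use the characterization $iii)$ of Proposition \ref{nonzero input and output paths} (a nonzero coordinate $\phi_{p,v_L}$ along a path through each hidden neuron), which transfers verbatim from $\theta$ to $\tilde\theta$ and yields $\tilde\theta \in \ParS$ via $iii)\Rightarrow i)$. This is exactly the paper's proof.
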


\begin{proof}
Let $\theta \in \ParS$ and $\tilde{\theta} \in \Par$ such that $\theta \overset{R}{\sim} \tilde{\theta}$. Proposition \ref{equivalence implies equal phi} shows that $\phi(\tilde{\theta}) = \phi(\theta)$.

Let $l \in \lb 1 , L \rb$ and $v \in V_l$. Since $\theta \in \ParS$, according to Proposition \ref{nonzero input and output paths} there exists $l' \in \lb 0, l \rb$, a path $p = (v_{l'}, \dots, v_l, \dots , v_{L-1}) $ and $v_L \in V_L$ such that $\phi_{p,v_L}(\theta)  \neq 0$. We have
\[  \phi_{p,v_L}(\tilde{\theta}) = \phi_{p,v_L}(\theta) \neq 0,\]
and since this is true for any $l \in \lb 1 , L \rb$ and $v \in V_l$, Proposition \ref{nonzero input and output paths} shows that $\tilde{\theta} \in \ParS$.
\end{proof}

We restate and prove Proposition \ref{equal phi implies resc equivalence-main}.
\begin{prop}\label{equal phi implies resc equivalence}
For all $\theta \in \ParS $, for all $\tilde{\theta} \in \Par$, 
\[\phi(\theta) = \phi( \tilde{\theta}) \quad \Longrightarrow \quad \theta \overset{R}{\sim} \tilde{\theta}.\]
\end{prop}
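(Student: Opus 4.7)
The plan is to construct rescaling vectors $(\lambda^0, \dots, \lambda^L)$ directly from the identity $\phi(\theta) = \phi(\tilde{\theta})$, using the path notations $\theta_{p_i}$, $\theta_{p_o}$ from \eqref{definition p_i}--\eqref{definition p_o}. I set $\lambda^0 = \One_{V_0}$ and $\lambda^L = \One_{V_L}$ by convention. For each hidden neuron $v_l \in V_l$ with $l \in \lb 1, L-1 \rb$, Proposition \ref{nonzero input and output paths} supplies an input path $p_i = (v_{l'}, \dots, v_l)$ and an output path $p_o = (v_l, \dots, v_L)$ with $\theta_{p_i} \theta_{p_o} \neq 0$. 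The concatenation $p = (v_{l'}, \dots, v_{L-1}) \in \mathcal{P}_{l'}$ satisfies $\phi_{p, v_L}(\theta) = \theta_{p_i} \theta_{p_o}$ and $\phi_{p, v_L}(\tilde{\theta}) = \tilde{\theta}_{p_i} \tilde{\theta}_{p_o}$, so the hypothesis forces $\tilde{\theta}_{p_i} \tilde{\theta}_{p_o} \neq 0$; in particular both $\tilde{\theta}_{p_i}$ and $\tilde{\theta}_{p_o}$ are nonzero, and I define
\[\lambda^l_{v_l} := \frac{\theta_{p_i}}{\tilde{\theta}_{p_i}}.\]

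The first obstacle is well-definedness of this ratio. I would check that for two admissible input paths $p_i, p_i'$ ending at $v_l$, concatenating each with a common admissible $p_o$ yields $\theta_{p_i}/\tilde{\theta}_{p_i} = \tilde{\theta}_{p_o}/\theta_{p_o} = \theta_{p_i'}/\tilde{\theta}_{p_i'}$ by cross-multiplication. The same identity shows $\lambda^l_{v_l} = \tilde{\theta}_{p_o}/\theta_{p_o}$ for any admissible output path, a form I use below. In particular $\lambda^l_{v_l} \neq 0$.

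Next I verify the bias identity $b_{v_l} = \lambda^l_{v_l} \tilde{b}_{v_l}$. For $l = L$, the empty path $\beta$ gives $\phi_{\beta, v_L}(\theta) = b_{v_L} = \tilde{b}_{v_L} = \lambda^L_{v_L} \tilde{b}_{v_L}$ directly. For $l \in \lb 1, L-1 \rb$, pick a nonzero output path $p_o$ from $v_l$; the path $p = (v_l, \dots, v_{L-1}) \in \mathcal{P}_l$ gives $\phi_{p, v_L}(\theta) = b_{v_l} \theta_{p_o}$ and $\phi_{p, v_L}(\tilde{\theta}) = \tilde{b}_{v_l} \tilde{\theta}_{p_o} = \tilde{b}_{v_l} \lambda^l_{v_l} \theta_{p_o}$, and dividing by the nonzero $\theta_{p_o}$ delivers the identity.

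Finally, for each edge $v \to v'$ from $V_{l-1}$ to $V_l$, I choose a nonzero input path $p_i$ ending at $v$ (the trivial singleton $(v)$ with $\theta_{p_i} = 1$ if $v \in V_0$) and a nonzero output path $p_o$ starting at $v'$ (trivial if $v' \in V_L$). The concatenated path yields
\[\theta_{p_i} w_{v \rightarrow v'} \theta_{p_o} \;=\; \tilde{\theta}_{p_i} \tilde{w}_{v \rightarrow v'} \tilde{\theta}_{p_o} \;=\; \frac{\theta_{p_i}}{\lambda^{l-1}_v}\, \tilde{w}_{v \rightarrow v'}\, \lambda^l_{v'} \theta_{p_o},\]
and dividing by the nonzero $\theta_{p_i}\theta_{p_o}$ produces the weight identity \eqref{equivalence def-2-main} uniformly, with no separate case for $w_{v \rightarrow v'} = 0$ (that case automatically forces $\tilde{w}_{v \rightarrow v'} = 0$). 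The main conceptual difficulty is the consistency of these auxiliary path choices across neurons and edges, which is packaged entirely into the well-definedness argument for $\lambda^l_{v_l}$; once that is in hand, every identity reduces to a single-path instance of $\phi(\theta) = \phi(\tilde{\theta})$.
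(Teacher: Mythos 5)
Your proposal is correct and takes essentially the same approach as the paper's proof: both construct the rescaling vectors $\lambda^l_{v}$ as ratios of nonzero path products supplied by Proposition \ref{nonzero input and output paths} and obtain \eqref{equivalence def-2-main} by comparing coordinates of $\phi(\theta)=\phi(\tilde\theta)$ along concatenated paths. The only cosmetic difference is that you define $\lambda^l_{v}$ via input-path ratios (checking agreement with the output-path ratio) and verify each weight identity by sandwiching the edge between an admissible input and output path, whereas the paper fixes one output path per hidden neuron and first derives the uniform relation $\theta_{p_i} = \lambda^l_{v_l}\,\tilde\theta_{p_i}$ for all input paths, from which everything follows.
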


\begin{proof}
Let us choose $(\lambda^0, \dots, \lambda^L) \in (\RR^*)^{V_0} \times \dots \times (\RR^*)^{V_L} $ as follows. For all $l \in \lb 1, L-1 \rb$ and all $v_l \in V_l$, since $\theta \in \ParS$, Proposition \ref{nonzero input and output paths} shows that there exists a path $p_o(v_l) = (v_{l}, \dots, v_L) \in V_{l} \times \dots \times V_L$ such that $\theta_{p_o(v_l)} \neq 0$. Let us define $\lambda^0 = \One_{V_0}$, $\lambda^L = \One_{V_L}$ and for all $l \in \lb 1, L-1 \rb$, 
\[\lambda^l_{v_l} = \frac{\tilde{\theta}_{p_o(v_l)}}{\theta_{p_o(v_l)}}.\]
The value of $\lambda^l_{v_l}$ a priori depends on the choice of the path $p_o(v_l)$, but the first of the two following facts, that we are going to prove, shows it only depends on $v_l$, since in \eqref{theta_pi = lambda_vl tilde(theta)_pi}, $p_i$ does not depend on $p_o(v_l)$. 
\begin{itemize}
\item For all $l \in \lb 0, L \rb$, for all $v_l \in V_l$, for any $l' \in \lb 0, l \rb$ and any $p_i =(v_{l'}, \dots, v_l) \in V_{l'} \times \dots \times V_l$,
\begin{equation}\label{theta_pi = lambda_vl tilde(theta)_pi}
\theta_{p_i} = \lambda^l_{v_l} \tilde{\theta}_{p_i}.
\end{equation}
\item For all $l \in \lb 0, L \rb$, for all $v_l \in V_l$,  $\lambda^l_{v_l} \neq 0$.
\end{itemize}
Indeed, let $l \in \lb 0 , L \rb$ and let us consider $l' \in \lb 0, l \rb$ and a path $p_i = (v_{l'}, \dots, v_l) \in V_{l'} \times \dots \times V_l$. Let $v_{l+1}, \dots, v_L \in V_{l+1} \times \dots \times V_L$ such that $p_o(v_l) = (v_l, v_{l+1}, \dots, v_L)$. Let $p =(v_{l'}, \dots, v_l, \dots, v_{L-1})\in \mathcal{P}_{l'}$ so that $p_i \cup p_o(v_l) = p \cup (v_L)$. We have by hypothesis
\[ \theta_{p_i} \theta_{p_o(v_l)} = \phi_{p, v_L}(\theta) = \phi_{p, v_L}(\tilde{\theta}) = \tilde{\theta}_{p_i} \tilde{\theta}_{p_o(v_l)},\]
thus
\[\theta_{p_i} = \frac{\tilde{\theta}_{p_o(v_l)}}{\theta_{p_o(v_l)}} \tilde{\theta}_{p_i} = \lambda^l_{v_l} \tilde{\theta}_{p_i},\]
which proves the first point. To prove the second point, we simply use Proposition \ref{nonzero input and output paths} to consider a path $p_i$ such that $\theta_{p_i} \neq 0$, and \eqref{theta_pi = lambda_vl tilde(theta)_pi} shows that $\lambda^l_{v_l} \neq 0$.

Let us now prove the rescaling equivalence.
Let $l \in \lb 1, L \rb$, and let $(v_{l-1}, v_l) \in V_{l-1} \times V_l$. Let us consider, thanks to Proposition \ref{nonzero input and output paths},  $l' \in \lb 0 , l-1 \rb$ and a path $p_i = (v_{l'}, \dots, v_{l-1}) \in V_{l'} \times \dots \times V_{l-1}$ such that $\theta_{p_i} \neq 0$. The relation \eqref{theta_pi = lambda_vl tilde(theta)_pi} shows we also have $\tilde{\theta}_{p_i} \neq 0$. Let $p_i' = p_i \cup (v_l)$. Using \eqref{theta_pi = lambda_vl tilde(theta)_pi} with $\theta_{p_i'}$ we have
\[ \theta_{p_i} w_{v_{l-1} \rightarrow v_{l}} = \theta_{p_i'} = \lambda^l_{v_l} \tilde{\theta}_{p_i'} = \lambda^l_{v_l}  \tilde{\theta}_{p_i} \tilde{w}_{v_{l-1} \rightarrow v_l}.\]
At the same time, using \eqref{theta_pi = lambda_vl tilde(theta)_pi} with $\theta_{p_i}$ we have,
\[ \theta_{p_i} w_{v_{l-1} \rightarrow v_{l}} = \lambda^{l-1}_{v_{l-1}} \tilde{\theta}_{p_i} w_{v_{l-1} \rightarrow v_{l}},  \]
so combining both equalities, we have
\[\lambda^l_{v_l}  \tilde{\theta}_{p_i} \tilde{w}_{v_{l-1} \rightarrow v_l} = \lambda^{l-1}_{v_{l-1}} \tilde{\theta}_{p_i} w_{v_{l-1} \rightarrow v_{l}} .\]
Using the fact that $\tilde{\theta}_{p_i} \neq 0$ and $\lambda^{l-1}_{v_{l-1}} \neq 0$, we finally obtain, for all $l \in \lb 1, L \rb$ and all $(v_{l-1}, v_l) \in V_{l-1} \times V_l$:
\[w_{v_{l-1} \rightarrow v_{l}} = \frac{\lambda^l_{v_l}}{\lambda^{l-1}_{v_{l-1}}} \tilde{w}_{v_{l-1} \rightarrow v_l}.\]
For all $l \in \lb 1, L \rb$ and all $v_l \in V_l$, using \eqref{theta_pi = lambda_vl tilde(theta)_pi} with $p_i = (v_l)$, we obtain
\[b_{v_l} =  \lambda^l_{v_l} \tilde{b}_{v_l}.\]
This shows that \eqref{equivalence def-2-main} is satisfied for all $(v_{l-1},v_l) \in V_{l-1} \times V_l$, and thus $\theta \rsim \tilde{\theta}$.
\end{proof}

The following proposition is useful in the proof of Theorem \ref{local identifiability using Sigma1*} and allows to improve identifiability modulo rescaling into identifiability modulo positive rescaling. 
\begin{prop}\label{rescaling equivalent and close implies equivalent}
For all $\theta \in \ParS$, there exists $\epsilon > 0$ such that for all $\tilde{\theta} \in \Par$, 
\[\|\theta - \tilde{\theta}\|_{\infty} < \epsilon \text{ and } \theta \rsim \tilde{\theta} \quad \Longrightarrow \quad \theta \sim \tilde{\theta}.\]
\end{prop}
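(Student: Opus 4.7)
The plan is to reduce the claim to showing that, for $\tilde\theta$ sufficiently close to $\theta$, every family of scalars witnessing $\theta \rsim \tilde\theta$ must consist of strictly positive entries at the hidden layers. Indeed, by Definition \ref{def:rescaling}, $\theta \sim \tilde\theta$ amounts to $\theta \rsim \tilde\theta$ together with $\sgn(\theta) = \sgn(\tilde\theta)$. Through the identities \eqref{equivalence def-2-main} (and the observation that a zero weight or bias on one side forces the other to vanish), the sign condition is automatic as soon as every $\lambda^l_{v_l}$ is strictly positive; the boundary layers $l=0$ and $l=L$ are already fixed to $\One_{V_0}$ and $\One_{V_L}$, so only the hidden layers need attention.

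The key step is to express $\lambda^l_{v_l}$ as an explicit function of $(\theta, \tilde\theta)$. By Proposition \ref{nonzero input and output paths}, since $\theta \in \ParS$, for each hidden neuron $v_l \in V_1 \cup \dots \cup V_{L-1}$ there exists an output path $p_o(v_l) = (v_l, \dots, v_L)$ with $\theta_{p_o(v_l)} \neq 0$. Applying $\tilde w_{v_k \to v_{k+1}} = (\lambda^k_{v_k}/\lambda^{k+1}_{v_{k+1}})\, w_{v_k \to v_{k+1}}$ along this path and telescoping, using $\lambda^L_{v_L} = 1$, yields
\begin{equation*}
\tilde\theta_{p_o(v_l)} \;=\; \lambda^l_{v_l}\,\theta_{p_o(v_l)},
\end{equation*}
so $\lambda^l_{v_l} = \tilde\theta_{p_o(v_l)}/\theta_{p_o(v_l)}$. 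In particular $\lambda^l_{v_l}$ is uniquely determined by $(\theta, \tilde\theta)$, eliminating any ambiguity from possibly multiple rescaling families realising $\theta \rsim \tilde\theta$.

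To conclude, I invoke continuity. The polynomial map $\tilde\theta \mapsto \tilde\theta_{p_o(v_l)}$ is continuous and equals the nonzero value $\theta_{p_o(v_l)}$ at $\tilde\theta = \theta$; hence on some neighborhood of $\theta$ the value $\tilde\theta_{p_o(v_l)}$ retains the sign of $\theta_{p_o(v_l)}$, which makes $\lambda^l_{v_l}$ positive. Since $V_1 \cup \dots \cup V_{L-1}$ is finite, one $\epsilon > 0$ works simultaneously for every hidden neuron, and the reduction in the first paragraph then delivers $\theta \sim \tilde\theta$. The main delicate point is precisely the uniqueness of the $\lambda^l_{v_l}$ across different witnesses of $\rsim$; the output-path formula in the second paragraph handles it, and this is the only place where the hypothesis $\theta \in \ParS$ is genuinely used.
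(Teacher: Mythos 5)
Your proof is correct, but it takes a genuinely different route from the paper's. You pin down the rescaling coefficients themselves: using Proposition \ref{nonzero input and output paths} (hence the hypothesis $\theta \in \ParS$) you pick an output path $p_o(v_l)$ with $\theta_{p_o(v_l)} \neq 0$, telescope \eqref{equivalence def-2-main} along it to get $\tilde{\theta}_{p_o(v_l)} = \lambda^l_{v_l}\, \theta_{p_o(v_l)}$, and conclude by continuity of the polynomial map $\tilde{\theta} \mapsto \tilde{\theta}_{p_o(v_l)}$ that every hidden-layer $\lambda^l_{v_l}$ is positive once $\|\theta - \tilde{\theta}\|_\infty$ is small, which forces $\sgn(\theta) = \sgn(\tilde{\theta})$. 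The paper instead works coordinate by coordinate with an explicit radius $\epsilon = \min$ of the absolute values of the nonzero weights and biases of $\theta$: a nonzero entry cannot change sign within that radius, and a zero entry of $\theta$ forces the corresponding entry of $\tilde{\theta}$ to be zero directly from \eqref{equivalence def-2-main}, so the rescaling family's signs are never analyzed. The paper's argument is more elementary, gives an explicit $\epsilon$, and uses $\theta \in \ParS$ only marginally (to ensure the minimum is over a nonempty set), whereas your argument relies on $\ParS$ essentially; in exchange, yours establishes the slightly stronger structural fact that near $\theta$ any family witnessing $\theta \rsim \tilde{\theta}$ is automatically a \emph{positive} rescaling, in the spirit of the telescoping computations the paper uses elsewhere (e.g.\ in Propositions \ref{equal phi implies resc equivalence} and \ref{psi^theta is injective}). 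One cosmetic remark: uniqueness of the $\lambda^l_{v_l}$ across witnesses, which you flag as the delicate point, is not actually needed — it suffices that the path identity holds for whichever witnessing family you are given, since the sign conclusion is then drawn from \eqref{equivalence def-2-main} with that same family.
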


\begin{proof}
Let $\theta \in \ParS$. We define
\[\epsilon = \min \Big( \big\{ |w_{v \rightarrow v'} |, \ v \rightarrow v' \in E \text{ and } w_{v \rightarrow v'} \neq 0 \big\} \ \cup \ \big\{ |b_v |, \ v \in B \text{ and } b_v \neq 0 \big\} \Big).\]
Let $\tilde{\theta} \in \Par$ such that $\| \theta - \tilde{\theta}\|_\infty < \epsilon$ and $\theta \rsim \tilde{\theta}$. To prove $\theta \sim \tilde{\theta}$, we simply have to prove $\sgn(\theta) = \sgn(\tilde{\theta})$. There exists \mbox{$(\lambda^0, \dots, \lambda^L) \in (\RR^*)^{V_0} \times \dots \times (\RR^*)^{V_L}$}, with $\lambda^0 = \One_{V_0}$ and $\lambda^L = \One_{V_L}$, such that, for all $l \in \lb 1, L \rb$, for all $(v_{l-1}, v_l) \in V_{l-1} \times V_l$, \eqref{equivalence def-2-main} holds. Let us show that $\sgn (\theta) = \sgn(\tilde{\theta})$. 

Indeed, let $l \in \lb 1, L \rb$, and let $(v, v') \in V_{l-1} \times V_l$. If $w_{v \rightarrow v'} \neq 0$, then since \mbox{$|w_{v \rightarrow v'} - \tilde{w}_{v \rightarrow v'} | < \epsilon$} and by definition $\epsilon \leq |w_{v \rightarrow v'}|$, we have $\sgn(w_{v \rightarrow v'})= \sgn(\tilde{w}_{v \rightarrow v'})$. Otherwise, if $w_{v \rightarrow v'} = 0$, \eqref{equivalence def-2-main} shows that we have
\[\tilde{w}_{v \rightarrow v'} = \frac{\lambda^{l-1}_{v}}{\lambda^l_{v'}}w_{v \rightarrow v'} = 0,\]
so we still have $\sgn(w_{v \rightarrow v'})=\sgn(\tilde{w}_{v \rightarrow v'})$. 

Now let $l \in \lb 1, L \rb$ and let $v \in V_l$. Similarly, if $b_v \neq 0$, we have $|b_v - \tilde{b}_v| < \epsilon \leq |b_v|$, so $\sgn(b_v)= \sgn(\tilde{b}_v)$, and if $b_v = 0$, we have
\[\tilde{b}_v = \frac{b_v}{\lambda^l_v} = 0,\]
so again $\sgn(b_v)= \sgn(\tilde{b}_v)$. 

This shows $\sgn(\theta) = \sgn(\tilde{\theta})$, so $\theta \sim \tilde{\theta}$.
\end{proof}

\section{The smooth manifold structure of \texorpdfstring{$\Sigma_1^*$}{}}\label{The smooth manifold structure of Sigma_1^*-sec}

In this section, we prove Theorem \ref{Sigma_1^* is a smooth manifold-main}, which is restated as Theorem \ref{Sigma_1^* is a smooth manifold}. Before doing so, we establish intermediary results, some of which are evoked in Section \ref{The smooth manifold Sigma_1^*-sec-main}.

Let us discuss the cardinal of $F_\theta$ defined in Section \ref{The smooth manifold Sigma_1^*-sec-main}. The set $F_\theta$ is obtained by removing the edges of the form $v \rightarrow s^\theta_{\max}(v)$ for $v \in V_1 \cup \dots \cup V_{L-1}$. Note that we do not remove the edges of the form $v \rightarrow s^\theta_{\max}(v)$ for $v \in V_0$.
For all $l \in \lb 1, L-1 \rb$, there are precisely $N_l$ edges of the form $(v, s^\theta_{\max}(v))$ with $v \in V_l$, so
\begin{equation*}
\begin{aligned}
|F_\theta| & = |E| - \left( N_1 + \dots + N_{L-1} \right) \\
& = N_0N_1 + \dots + N_{L-1}N_L - N_1 - \dots - N_{L-1}.
\end{aligned}
\end{equation*}

As a consequence, since $|B| = N_1 + \dots + N_L$, we have in particular
\begin{align}\label{dimension of Sigma_1^*}
    |F_\theta| + |B| & = N_0N_1 + \dots + N_{L-1}N_L - N_1 - \dots - N_{L-1} + N_1 + \dots + N_L \nonumber \\
    & = N_0N_1 + \dots + N_{L-1}N_L + N_L.
\end{align}

The following proposition is a first step towards Proposition \ref{psi^theta is a homeomorphism}, which states that $\psi^\theta$ is a homeomorphism.
\begin{prop}\label{psi^theta is injective}
For all $\theta \in \ParS$, the function $\psi^{\theta}: U_\theta \rightarrow \RR^{\mathcal{P}\times V_L}$ is injective.
\end{prop}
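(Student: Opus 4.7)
The plan is to reduce to Proposition~\ref{equal phi implies resc equivalence} and then exploit the specific choice of fixed weights (those of the form $v \rightarrow s^\theta_{\max}(v)$) to trivialize the rescaling that $\phi$ leaves invariant.

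Given $\tau_1, \tau_2 \in U_\theta$ with $\psi^\theta(\tau_1) = \psi^\theta(\tau_2)$, set $\tilde\theta_1 = \rho_\theta(\tau_1)$ and $\tilde\theta_2 = \rho_\theta(\tau_2)$. By definition of $U_\theta$ in \eqref{definition of U_theta-main}, both $\tilde\theta_1$ and $\tilde\theta_2$ belong to $\ParS$, and by construction $\phi(\tilde\theta_1) = \phi(\tilde\theta_2)$. Proposition~\ref{equal phi implies resc equivalence} applied to $\tilde\theta_1 \in \ParS$ then yields $\tilde\theta_1 \rsim \tilde\theta_2$, so there exist $(\lambda^0, \dots, \lambda^L) \in (\RR^*)^{V_0} \times \dots \times (\RR^*)^{V_L}$ with $\lambda^0 = \One_{V_0}$, $\lambda^L = \One_{V_L}$, satisfying \eqref{equivalence def-2-main} between $\tilde\theta_1$ and $\tilde\theta_2$.

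The key step is showing that in fact $\lambda^l = \One_{V_l}$ for every $l \in \lb 0, L \rb$. For each $v \in V_l$ with $l \in \lb 1, L-1 \rb$, the edge $(v, s^\theta_{\max}(v))$ is excluded from $F_\theta$, so by the definition \eqref{definition of rho_theta-main} of $\rho_\theta$ both parameterizations share the same weight on that edge, equal to $w_{v \rightarrow s^\theta_{\max}(v)}$. Moreover this common weight is nonzero: since $\theta \in \ParS$ we have $w_{v \rightarrow \bullet} \neq 0$, and $s^\theta_{\max}(v)$ picks the outward edge with maximum absolute weight, which is therefore nonzero. Writing $v' = s^\theta_{\max}(v)$, \eqref{equivalence def-2-main} gives
\[
w_{v \rightarrow v'} \;=\; \frac{\lambda^{l+1}_{v'}}{\lambda^{l}_{v}}\, w_{v \rightarrow v'},
\]
hence $\lambda^{l+1}_{s^\theta_{\max}(v)} = \lambda^{l}_{v}$ for every $v \in V_l$ and $l \in \lb 1, L-1 \rb$. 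Starting from $\lambda^L = \One_{V_L}$ and applying this identity with $l = L-1$ yields $\lambda^{L-1}_v = 1$ for every $v \in V_{L-1}$, and a straightforward backward induction on $l$ gives $\lambda^l = \One_{V_l}$ for all $l \in \lb 1, L-1 \rb$. Combined with $\lambda^0 = \One_{V_0}$ and $\lambda^L = \One_{V_L}$, \eqref{equivalence def-2-main} then forces $\tilde\theta_1 = \tilde\theta_2$, i.e. $\rho_\theta(\tau_1) = \rho_\theta(\tau_2)$. Since $\rho_\theta$ is affine and injective, this implies $\tau_1 = \tau_2$, proving injectivity of $\psi^\theta$.

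I do not anticipate a real obstacle: Proposition~\ref{equal phi implies resc equivalence} already handles the nontrivial content (that $\phi$ identifies parameters only up to rescaling), and the fixed coordinates of $\rho_\theta$ were tailored precisely so that the scaling family $(\lambda^l)$ cannot be nontrivial. The only point requiring minor care is checking that $w_{v \rightarrow s^\theta_{\max}(v)} \neq 0$, which follows from $\theta \in \ParS$ via Proposition~\ref{nonzero input and output paths} or directly from the definition of $S$.
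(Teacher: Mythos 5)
Your proposal is correct and follows essentially the same route as the paper's proof: reduce via Proposition \ref{equal phi implies resc equivalence} to a rescaling equivalence between $\rho_\theta(\tau_1)$ and $\rho_\theta(\tau_2)$, use the fixed (and nonzero, since $\theta \notin S$) weights on the edges $v \rightarrow s^\theta_{\max}(v)$ to get $\lambda^{l+1}_{s^\theta_{\max}(v)} = \lambda^l_v$, and propagate $\lambda^L = \One_{V_L}$ backward to trivialize all the $\lambda^l$, concluding by injectivity of $\rho_\theta$. No gaps; this matches the paper's argument step for step.
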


\begin{proof}
Let $\tau, \tilde{\tau} \in U_\theta$ such that $\psi^\theta(\tau) = \psi^\theta(\tilde{\tau})$. Let us show $\tau = \tilde{\tau}$.
We have $\phi(\rho_\theta(\tau)) = \phi(\rho_\theta(\tilde{\tau}))$ and by definition of $U_\theta$, $\rho_\theta(\tau) \in \ParS$, so by Proposition \ref{equal phi implies resc equivalence} we have the rescaling equivalence
\[\rho_\theta(\tau) \overset{R}{\sim} \rho_\theta(\tilde{\tau}).\]
By definition of the rescaling equivalence, in its formulation \eqref{equivalence def-2-main}, there exists $(\lambda^0, \dots, \lambda^L) \in (\RR^*)^{V_0} \times \dots \times (\RR^*)^{V_L}$, with $\lambda^0 = \One_{V_0}$ and $\lambda^L = \One_{V_L}$, such that, for all $l \in \llbracket 1 , L \rrbracket$, for all $(v_{l-1}, v_l) \in V_{l-1} \times V_l$,
\begin{equation}\label{proof injectivity psi-1}
\begin{cases}\rho_\theta(\tau)_{v_{l-1} \rightarrow v_l} = \frac{(\lambda^l)_{v_l}}{(\lambda^{l-1})_{v_{l-1}}}\rho_\theta(\tilde{\tau})_{v_{l-1} \rightarrow v_l} \\
b_{v_l} = \lambda^l_{v_l} \tilde{b}_{v_l}. \end{cases}
\end{equation}

Let $l \in \llbracket 2, L \rrbracket$ and let $v_{l-1} \in V_{l-1}$. Let $v_l = s^\theta_{\max}(v_{l-1})$. According to \eqref{proof injectivity psi-1} we have
\[\rho_\theta(\tau)_{v_{l-1} \rightarrow v_l} = \frac{(\lambda^l)_{v_l}}{(\lambda^{l-1})_{v_{l-1}}}\rho_\theta(\tilde{\tau})_{v_{l-1} \rightarrow v_l}.\]
But since $v_l = s^\theta_{\max}(v_{l-1})$ and $v_{l-1} \in V_{l-1}$ with $l-1 \in \lb 1, L-1 \rb$, we have $v_{l-1} \rightarrow v_l \in E\backslash F_\theta$, so by definition of $\rho_\theta$ in  \eqref{definition of rho_theta-main},
\[\rho_\theta(\tau)_{v_{l-1}\rightarrow v_l} = w_{v_{l-1} \rightarrow v_l} =  \rho_\theta(\tilde{\tau})_{v_{l-1} \rightarrow v_l} \neq 0, \]
so $\frac{(\lambda^l)_{v_l}}{(\lambda^{l-1})_{v_{l-1}}} = 1$.

We have shown that for all $l \in \lb 2 , L \rb $, for all $v_{l-1} \in V_{l-1}$, there exists $v_l \in V_l$ such that 
\begin{equation*}
(\lambda^{l-1})_{v_{l-1}}= (\lambda^l)_{v_l}.
\end{equation*}
As a consequence, if $l$ is such that $\lambda^l = \One_{V_l}$, then $\lambda^{l-1} = \One_{V_{l-1}}$.

Starting from $\lambda^L = \One_{V_L}$, this shows by induction that for all $l \in \lb 1, L \rb$, 
\[\lambda^l = \One_{V_l}.\]
By hypothesis we also have $\lambda^0 = \One_{V_0}$. Using \eqref{proof injectivity psi-1}, this shows that
\[\rho_\theta(\tau) = \rho_\theta(\tilde{\tau}).\]
The injectivity of $\rho_\theta$ allows us to conclude that 
\[\tau = \tilde{\tau}.\]
\end{proof}

The following proposition shows, as mentioned in Section \ref{The smooth manifold Sigma_1^*-sec-main}, that $\psi^\theta$ is a homeomorphism. This is a necessary step to prove that $(V_\theta, (\psi^\theta)^{-1})_{\theta \in \ParS}$ is an atlas of $\Sigma_1^*$.
\begin{prop}\label{psi^theta is a homeomorphism}
For all $\theta \in \ParS$, $\psi^{\theta}$ is a homeomorphism from $U_{\theta}$ onto its image $V_\theta$.
\end{prop}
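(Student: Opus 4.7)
The plan is to verify that $\psi^\theta$ is continuous and that its inverse on $V_\theta$ is continuous as well. Since $\psi^\theta = \phi \circ \rho_\theta$, the continuity of $\psi^\theta$ is immediate: $\rho_\theta$ is affine by \eqref{definition of rho_theta-main} and $\phi$ is polynomial. Injectivity was just obtained in Proposition \ref{psi^theta is injective}, so the only remaining point is the continuity of $(\psi^\theta)^{-1}$ on $V_\theta = \psi^\theta(U_\theta)$. Because continuity is a local property, I would show that for every $\tilde\tau \in U_\theta$, each coordinate of $\tau$ can be written, in a neighborhood of $\psi^\theta(\tilde\tau)$ in $V_\theta$, as a rational function of the coordinates of $\psi^\theta(\tau)$ whose denominator does not vanish at $\psi^\theta(\tilde\tau)$.

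The main tool will be the forward ``maximal'' chain built from $s^\theta_{\max}$. For $v \in V_l$ with $l \in \lb 1, L-1 \rb$, set $v_l = v$ and $v_{k+1} = s^\theta_{\max}(v_k)$ for $k \geq l$, and let $v_L = s^\theta_{\max}(v_{L-1})$. For every $k \geq l \geq 1$, the edge $v_k \to v_{k+1}$ lies in $E \setminus F_\theta$, so $\rho_\theta$ pins its weight to $w_{v_k \to v_{k+1}}$, and this value is nonzero since $\theta \in \ParS$ forbids $w_{v_k \to \bullet} = 0$ at any hidden neuron. For the path $p = (v_l, \dots, v_{L-1}) \in \mathcal{P}_l$, the definition of $\phi$ gives $\psi^\theta(\tau)_{p, v_L} = \tau_v \prod_{k=l}^{L-1} w_{v_k \to v_{k+1}}$, which recovers the bias $\tau_v$ as a linear function of $\psi^\theta(\tau)$ divided by a nonzero constant. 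For $v \in V_L$, $\tau_v = \psi^\theta(\tau)_{\beta, v}$ directly. For a free weight $\tau_{v_0 \to v_1}$ with $v_0 \in V_0$, the path $(v_0, v_1, v_2, \dots, v_{L-1})$ prolonged by the $s^\theta_{\max}$ chain from $v_1$ yields the same kind of linear-in-$\phi$ formula.

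The hard part will be the free weights $\tau_{v_{l-1} \to v_l}$ with $(v_{l-1}, v_l) \in F_\theta$ and $l \geq 2$, because then $v_{l-1}$ is a hidden neuron and every path through the edge $v_{l-1} \to v_l$ inevitably carries variable biases or weights from upstream layers. To handle this I would invoke Proposition \ref{nonzero input and output paths} applied to $\tilde\theta = \rho_\theta(\tilde\tau) \in \ParS$: it supplies some $l' \in \lb 0, l-1 \rb$ and a path $p_i = (v_{l'}, \dots, v_{l-1})$ with $\tilde\theta_{p_i} \neq 0$. Prolonging $p_i$ with the maximal chain from $v_l$ (ending at $s^\theta_{\max}(v_{L-1})$) and, separately, with the maximal chain from $v_{l-1}$ produces two components of $\phi(\rho_\theta(\tau))$ whose ratio equals $\tilde{w}_{v_{l-1} \to v_l}$ times an explicit nonzero ratio of fixed weights pinned by $\theta$. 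The denominator is the polynomial $\rho_\theta(\tau)_{p_i}$ times a nonzero constant; nonzero at $\tilde\tau$, it remains nonzero on a whole neighborhood of $\psi^\theta(\tilde\tau)$ in $V_\theta$ by continuity. The extremal case $l = L$ is analogous, with $p_i$ itself playing both roles via the two output choices $v_L$ and $s^\theta_{\max}(v_{L-1})$. The path $p_i$ depends on $\tilde\tau$, but since I only need continuity pointwise on $V_\theta$ that dependence is harmless. Gluing these coordinate-by-coordinate formulas produces a continuous $(\psi^\theta)^{-1}$ on $V_\theta$ and finishes the proof.
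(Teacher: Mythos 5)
Your proposal is correct, and its key step differs genuinely from the paper's. Both proofs share the same skeleton (continuity of $\psi^\theta$ since it is polynomial, injectivity from Proposition \ref{psi^theta is injective}, recovery of biases and of the weights out of $V_0$ via the pinned $s^\theta_{\max}$-chains, and Proposition \ref{nonzero input and output paths} to produce a path $p_i$ into $v_{l-1}$ with nonzero value), but they diverge on the hard case of a free weight $\tau_{v_{l-1}\to v_l}$ with $l\geq 2$. The paper argues sequentially: it takes $\tau_n$ with $\psi^\theta(\tau_n)\to\psi^\theta(\tau)$, proves bias convergence first, and then runs an induction over layers so that the upstream factor $\rho_\theta(\tau_n)_{p_i}$ (which involves free biases and free weights of lower layers) is shown to converge to the nonzero limit $\rho_\theta(\tau)_{p_i}$, after which the quotient formula gives convergence of $(\tau_n)_{v_{l-1}\to v_l}$. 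You instead observe that this same upstream factor is, up to a fixed nonzero product of pinned weights, itself a \emph{coordinate} of $\psi^\theta(\tau)$ — namely the one indexed by $p_i$ prolonged with the pinned $s^\theta_{\max}$-chain from $v_{l-1}$ — so that each coordinate of $(\psi^\theta)^{-1}$ is locally the ratio of two coordinates of $\eta$ with denominator nonvanishing near $\psi^\theta(\tilde\tau)$; this makes the "by continuity" step non-circular and removes the need for both the sequence argument and the layer induction (the case $l=L$ with the two output choices, and the dependence of $p_i$ on the base point $\tilde\tau$, are handled correctly). What your route buys is an explicit local rational expression for the inverse chart, which is stronger than bare continuity and shorter; what the paper's route buys is that it never needs the observation that the denominator is realizable as a coordinate of the lift, proceeding instead by elementary limit manipulations. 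Only minor polishing would be needed (e.g.\ writing the recovered weight as the coordinate of the generic $\tau$ rather than $\tilde{w}_{v_{l-1}\to v_l}$, and stating the fixed constant ratio explicitly), none of which affects correctness.
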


\begin{proof}We already know from Proposition \ref{psi^theta is injective} that $\psi^\theta$ is injective, so we need to prove that $\psi^\theta$ is continuous and its inverse is continuous. The function $\rho_\theta$ is affine and $\phi$ is a polynomial function, so the function $\psi^\theta = \phi \circ \rho_\theta$ is a polynomial function, and in particular it is continuous.

To prove that $(\psi^\theta)^{-1}$ is continuous, we consider a sequence $(\tau_n)$ taking values in $U_\theta$ and $\tau \in U_\theta$ such that $\psi^\theta(\tau_n) \rightarrow \psi^\theta(\tau)$, and we want to show that $\tau_n \rightarrow \tau$.

Let us first show that for all $v \in B$, $(\tau_n)_v \rightarrow \tau_v$. Indeed, let $l \in \lb 1, L \rb$ and let $v_l \in V_l$, so that $v_l$ is an arbitrary element of $B$. Let us define $v_{l+1} = s^\theta_{\max}(v_l)$, then $v_{l+2} = s^\theta_{\max}(v_{l+1})$ and so on up to $v_L = s^\theta_{\max}(v_{L-1})$. Since for all $l' \in \lb l, L-1 \rb$, $v_{l'+1} = s^\theta_{\max}(v_{l'})$, by definition of $F_\theta$ and $\rho_\theta$ (see \eqref{definition of F_theta-main} and \eqref{definition of rho_theta-main}), we have
\begin{equation}\label{proof of psi^theta is a homeo.-3}
    \rho_\theta(\tau_n)_{v_{l'} \rightarrow v_{l'+1}} = w_{v_{l'} \rightarrow v_{l'+1}},
\end{equation}
and
\begin{equation}\label{proof of psi^theta is a homeo.-4}
    \rho_\theta(\tau)_{v_{l'} \rightarrow v_{l'+1}} = w_{v_{l'} \rightarrow v_{l'+1}}.
\end{equation}
In particular, since $\theta \not\in S$, for all $l' \in \lb l, L-1 \rb$ we have $w_{v_{l'} \rightarrow \bullet} \neq 0$, so by definition of $s^\theta_{\max}$, $w_{v_{l'} \rightarrow v_{l'+1}} \neq 0$. We thus have
\begin{equation}\label{proof of psi^theta is a homeo.-1}
    w_{v_l \rightarrow v_{l+1}} \dots w_{v_{L-1} \rightarrow v_L} \neq 0.
\end{equation}
If we denote $p=(v_l, \dots, v_{L-1})$, we have, using the definition of $\phi$ and \eqref{proof of psi^theta is a homeo.-3},
\[\psi^\theta_{p,v_L}(\tau_n) = (\tau_n)_{v_l} w_{v_l \rightarrow v_{l+1}} \dots w_{v_{L-1} \rightarrow v_L}\]
and using \eqref{proof of psi^theta is a homeo.-4},
\[\psi^\theta_{p,v_L}(\tau) = (\tau)_{v_l} w_{v_l \rightarrow v_{l+1}} \dots w_{v_{L-1} \rightarrow v_L}.\]
Using \eqref{proof of psi^theta is a homeo.-1} and the fact that
\[\psi^\theta(\tau_n) \rightarrow \psi^\theta(\tau),\]
we conclude that
\[(\tau_n)_{v_l} \rightarrow \tau_{v_l}.\]

Let us now prove that for all $(v,v') \in E$, $(\tau_n)_{v \rightarrow v'} \rightarrow \tau_{v \rightarrow v'}$. Let us show by induction on $l \in \lb 1, L \rb$ the following hypothesis
\begin{equation}\tag{$H_l$}
\forall l' \in \lb 1, l \rb, \quad  \forall (v,v') \in \left( V_{l'-1} \times V_{l'} \right) \cap F_\theta, \quad (\tau_n)_{v \rightarrow v'} \longrightarrow \tau_{v \rightarrow v'}. 
\end{equation}

{\bf Initialization.} Let $(v_0,v_1) \in (V_0 \times V_1) \cap F_\theta$. We define $v_2 = s^\theta_{\max} (v_1)$, then we define $v_3 = s^\theta_{\max}(v_2)$, and so on up to $v_L = s^\theta_{\max}(v_{L-1})$. Let $p = (v_0, \dots, v_{L-1}) \in \mathcal{P}$.

As above, using the definition of $\rho_\theta$, $F_\theta$ and $\phi$, we have
\[\psi^\theta_{p,v_L}(\tau_n) = (\tau_n)_{v_0 \rightarrow v_1} w_{v_1 \rightarrow v_2} \dots w_{v_{L-1} \rightarrow v_L}\]
and
\[\psi^\theta_{p,v_L}(\tau) = (\tau)_{v_0 \rightarrow v_1} w_{v_1 \rightarrow v_2} \dots w_{v_{L-1} \rightarrow v_L},\]
and since $\theta \not\in S$, we also have , as above,
\begin{equation}\label{proof of psi^theta is a homeo.-2}
     w_{v_1 \rightarrow v_2} \dots w_{v_{L-1} \rightarrow v_L} \neq 0.
\end{equation}

Since 
\[\psi^\theta(\tau_n) \longrightarrow \psi^\theta(\tau)\]
we conclude using \eqref{proof of psi^theta is a homeo.-2} that 
\[ (\tau_n)_{v_0 \rightarrow v_1} \longrightarrow \tau_{v_0 \rightarrow v_1}.\]
We have shown $H_1$.
 
{\bf Induction step.} Let $l \in \lb 2, L \rb$ and let us assume that $H_{l-1}$ holds.
 
Let $(v_{l-1},v_l) \in \left( V_{l-1} \times V_l \right) \cap F_\theta$. We define $v_{l+1} = s^\theta_{\max}(v_l), v_{l+2} = s^\theta_{\max} (v_{l+1})$, and so on up to $v_L = s^\theta_{\max}(v_{L-1})$. Let us denote $p_o = (v_l, \dots, v_L)$. Recalling the notation defined in \eqref{definition p_o}, we have
\begin{equation}\label{rho(tau)_po = theta_po}
\rho_\theta(\tau_n)_{p_o} = w_{v_l \rightarrow v_{l+1}} \dots w_{v_{L-1} \rightarrow v_L} = \rho_\theta(\tau)_{p_o} \neq 0. 
\end{equation}
 
At the same time, since $\tau \in U_{\theta}$, Proposition \ref{nonzero input and output paths} shows there exist $l' \in \lb 0, l-1 \rb$ and a path $p_i = (v_{l'}, \dots, v_{l-2}, v_{l-1})$ such that
\begin{equation}\label{proof of psi^theta is a homeo.-5}
    \rho_\theta(\tau)_{p_i} \neq 0.
\end{equation}
If $l' \geq 1$, we have shown in the first part of the proof that $(\tau_n)_{v_{l'}} \longrightarrow \tau_{v_{l'}}$. Moreover, whatever the value of $l'$ is, for $k \in \lb l', l-2 \rb$, if $(v_k, v_{k+1}) \in E \backslash F_\theta$, 
\[\rho_\theta(\tau_n)_{v_k \rightarrow v_{k+1}} = w_{v_k \rightarrow v_{k+1}} = \rho_\theta(\tau)_{v_k \rightarrow v_{k+1}},\]
and if $(v_k, v_{k+1}) \in F_\theta$, according to $H_{l-1}$, 
\[\rho_\theta(\tau_n)_{v_k \rightarrow v_{k+1}} = (\tau_n)_{v_k \rightarrow v_{k+1}} \longrightarrow \tau_{v_k \rightarrow v_{k+1}} = \rho_\theta(\tau)_{v_k \rightarrow v_{k+1}}.\] We therefore have
\begin{equation}\label{rho(tau_n) -> rho(tau)}
\rho_\theta(\tau_n)_{p_i} \longrightarrow \rho_\theta(\tau)_{p_i},
\end{equation}
and in particular, since $\rho_\theta(\tau)_{p_i} \neq 0$, there exists $n_0 \in \NN$ such that for all $n \geq n_0$, 
\begin{equation}\label{rho(tau_n)_pi neq 0}
\rho_\theta(\tau_n)_{p_i} \neq 0.
\end{equation}
We can write
\[\psi^\theta_{p,v_L}(\tau_n) = \rho_\theta(\tau_n)_{p_i} \: (\tau_n)_{v_{l-1} \rightarrow v_l} \: \rho_\theta(\tau_n)_{p_o}\]
and
\[\psi^\theta_{p,v_L}(\tau) = \rho_\theta(\tau)_{p_i} \: (\tau)_{v_{l-1} \rightarrow v_l} \: \rho_\theta(\tau)_{p_o},\]
so using \eqref{rho(tau)_po = theta_po}, \eqref{rho(tau_n)_pi neq 0} and \eqref{rho(tau_n) -> rho(tau)}, we have
\[ (\tau_n)_{v_{l-1} \rightarrow v_l} = \frac{\psi^\theta_{p,v_L}(\tau_n) }{\rho_\theta(\tau_n)_{p_i} \rho_\theta(\tau_n)_{p_o}} \quad \longrightarrow \quad  \frac{\psi^\theta_{p,v_L}(\tau) }{\rho_\theta(\tau)_{p_i} \rho_\theta(\tau)_{p_o}} = \tau_{v_{l-1} \rightarrow v_l}.\]
We have shown $H_l$, which concludes the induction step.

In particular, $H_L$ is satisfied, and finally $\tau_n \rightarrow \tau$. 

This shows that $\psi^\theta$ is a homeomophism.
\end{proof}

The following lemma is necessary for the proof of Proposition \ref{psi^t(U_t) is a neighborhood of t}.
\begin{lem}\label{tilde(theta)_v->v' neq 0 close to theta}
Let $\theta \in \ParS$. Let $(v,v') \in E$ (resp. $v \in B$). If $w_{v \rightarrow v'} \neq 0$ (resp. $b_v \neq 0$), then there exists $\epsilon > 0$ such that for all $\tilde{\theta} \in \Par$, if $\| \phi(\theta) - \phi(\tilde{\theta}) \|_{\infty} < \epsilon$, then $\tilde{w}_{v \rightarrow v'} \neq 0$ (resp. $\tilde{b}_v \neq 0$).
\end{lem}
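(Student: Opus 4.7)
My strategy would be, for the edge $v\to v'$ (respectively the bias $b_v$), to exhibit a single coordinate $\phi_{p,v_L}$ of the lifting which factors as a product containing $w_{v\to v'}$ (resp.\ $b_v$) and whose value at $\theta$ is nonzero. Indeed, such a coordinate takes the form $\phi_{p,v_L}(\tilde\theta)=\tilde w_{v\to v'}\cdot\tilde c$ (resp.\ $\tilde b_v\cdot\tilde c$) at any parameter $\tilde\theta$, so picking $\epsilon<|\phi_{p,v_L}(\theta)|$ would force $|\phi_{p,v_L}(\tilde\theta)|>0$ whenever $\|\phi(\theta)-\phi(\tilde\theta)\|_\infty<\epsilon$, which in turn forces $\tilde w_{v\to v'}\neq 0$ (resp.\ $\tilde b_v\neq 0$). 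Note that one does \emph{not} need $\tilde c\neq 0$: having the product nonzero suffices.

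The existence of such a path is where the assumption $\theta\in\ParS$ enters, via Proposition \ref{nonzero input and output paths}. For the edge case, I would write $v\in V_{l-1}$, $v'\in V_l$. If $l-1\geq 1$, apply the proposition to the hidden neuron $v$ to obtain an input path $p_i$ ending at $v$ with $\theta_{p_i}\neq 0$; if $l-1=0$, take $p_i=(v)$ so that $\theta_{p_i}=1$ by the empty-product convention. Symmetrically, if $l\leq L-1$ apply it to $v'$ to obtain an output path $p_o$ starting at $v'$ with $\theta_{p_o}\neq 0$; if $l=L$ take $p_o=(v')$ with $\theta_{p_o}=1$. Concatenating $p_i$, the edge $v\to v'$, and the initial segment of $p_o$ yields a path $p$ (and endpoint $v_L$) for which, by the definition of $\phi$, $\phi_{p,v_L}(\theta)=\theta_{p_i}\cdot w_{v\to v'}\cdot\theta_{p_o}\neq 0$, as required.

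For the bias case, if $v\in V_L$ then $\phi_{\beta,v}(\theta)=b_v\neq 0$ while $\phi_{\beta,v}(\tilde\theta)=\tilde b_v$, so any $\epsilon<|b_v|$ works directly. If $v\in V_l$ with $l\in\lb 1,L-1\rb$, I would apply Proposition \ref{nonzero input and output paths} to $v$ to obtain an output path $p_o$ from $v$ with $\theta_{p_o}\neq 0$; taking $p$ to be the initial segment of $p_o$ inside $\mathcal{P}_l$ and $v_L$ its terminal neuron gives $\phi_{p,v_L}(\theta)=b_v\cdot\theta_{p_o}\neq 0$, and the same triangle-inequality argument concludes.

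I do not anticipate a genuine obstacle: the argument is essentially bookkeeping over how paths combine in the definition of $\phi$. The only mild subtlety lies in the boundary configurations where $v$ lies in the input layer or $v'$ in the output layer, but these are absorbed smoothly by the empty-product conventions built into the definitions of $\theta_{p_i}$ and $\theta_{p_o}$.
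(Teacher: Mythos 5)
Your proposal is correct and follows essentially the same route as the paper: use Proposition \ref{nonzero input and output paths} (with the empty-path/empty-product conventions at the boundary layers) to build a path through the edge $v\to v'$ (resp.\ through the bias $b_v$) whose $\phi$-coordinate is nonzero at $\theta$, then take $\epsilon$ of the order of that coordinate's absolute value so the corresponding coordinate of $\phi(\tilde\theta)$, which factors through $\tilde w_{v\to v'}$ (resp.\ $\tilde b_v$), cannot vanish. The only cosmetic difference is that you spell out the bias case (including $v\in V_L$ via $\phi_{\beta,v}$), which the paper dispatches as ``similar''.
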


\begin{proof}
Let $\theta \in \ParS$ and $(v,v') \in E$ such that $w_{v \rightarrow v'} \neq 0$. Denote $l \in \lb 0,L-1 \rb$ such that $v \in V_l$. If $l=0$, we take $p_i = (v)$ so that by convention $\theta_{p_i} = 1 \neq 0$, and if $l \geq 1$, we use Proposition \ref{nonzero input and output paths} which states that there exists $l' \in \lb 0, l-1 \rb$ and a path $p_i = (v_{l'}, \dots, v_{l-2}, v)$ such that $\theta_{p_i} \neq 0$. Similarly, if $l = L-1$, we take $p_o = (v')$ so that by convention $\theta_{p_o} = 1 \neq 0$ and if $l < L-1$, we use Proposition \ref{nonzero input and output paths} which states that there exists a path $p_o = (v', v_{l+1}, \dots, v_L)$ such that $\theta_{p_o} \neq 0$. If we denote 
\[ p = \begin{cases} (v, v', v_{l+2}, \dots, v_{L-1}) & \text{if } l=0\\
(v_{l'}, \dots , v_{l-1}, v, v')  & \text{if } l=L-1\\
(v_{l'}, \dots , v_{l-1}, v, v', v_{l+2}, \dots, v_{L-1})& \text{otherwise,} 
\end{cases}\]
we have
\[\phi_{p,v_L}(\theta) = \theta_{p_i} w_{v \rightarrow v'} \theta_{p_o} \neq 0.\]
We define $\epsilon = |\phi_{p, v_L}(\theta) | > 0$. For all $\tilde{\theta} \in \Par$ such that $\| \phi(\tilde{\theta}) - \phi(\theta)\|_{\infty} < \epsilon$ we have
\[\phi_{p, v_L}(\tilde{\theta}) \neq 0.\]
Since $\phi_{p, v_L}(\tilde{\theta}) = \tilde{\theta}_{p_i} \tilde{w}_{v \rightarrow v'} \tilde{\theta}_{p_o}$, this implies in particular that
\[\tilde{w}_{v \rightarrow v'} \neq 0.\]

The proof is similar in the case $v \in B$ and $b_v \neq 0$.
\end{proof}

The following proposition, which states that for any $\theta \in \ParS$, $V_\theta = \psi^\theta(U_\theta)$ is open with respect to the topology induced on $\Sigma_1^*$ by the standard topology of $\RR^{\mathcal{P}\times V_L}$, is necessary to show that $(V_\theta, (\psi^\theta)^{-1})_{\theta \in \ParS}$ is an atlas of $\Sigma_1^*$.

\begin{prop}\label{psi^t(U_t) is a neighborhood of t}
For any $\theta \in \ParS$, for any $\tau \in U_{\theta}$, there exists $\epsilon > 0$ such that
\[\Sigma_1^* \cap B_{\infty}(\psi^{\theta}(\tau),\epsilon) \subset V_\theta.\]
\end{prop}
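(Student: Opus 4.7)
The plan is to show that every point $\eta \in \Sigma_1^*$ sufficiently close to $\psi^\theta(\tau)$ can be written as $\psi^\theta(\tilde\tau)$ for some $\tilde\tau \in U_\theta$. Write $\theta' = \rho_\theta(\tau)$, so that $\theta' \in \ParS$ and $\psi^\theta(\tau) = \phi(\theta')$. Any $\eta \in \Sigma_1^* \cap B_\infty(\phi(\theta'),\epsilon)$ can be represented as $\eta = \phi(\tilde\theta)$ for some $\tilde\theta \in \ParS$, but this $\tilde\theta$ is only defined up to $\rsim$ (Propositions \ref{equivalence implies equal phi} and \ref{equal phi implies resc equivalence}). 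The task is therefore to choose, inside the rescaling class of $\tilde\theta$, a representative $\theta'' = \rho_\theta(\tilde\tau)$, i.e. whose weights on the fixed edges $v \to s^\theta_{\max}(v)$ (for $v$ in a hidden layer) agree with those of $\theta$.

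First I would note that since $\theta \in \ParS$, for every hidden $v$ one has $w_{v \to \bullet} \neq 0$, so by definition of $s^\theta_{\max}$, $w_{v \to s^\theta_{\max}(v)} \neq 0$. Applying Lemma \ref{tilde(theta)_v->v' neq 0 close to theta} to $\theta'$ and to each of these edges, there is some $\epsilon > 0$ such that for every $\tilde\theta$ with $\|\phi(\tilde\theta) - \phi(\theta')\|_\infty < \epsilon$ we have $\tilde w_{v \to s^\theta_{\max}(v)} \neq 0$ for every hidden $v$. Fix such $\epsilon$ and let $\eta \in \Sigma_1^* \cap B_\infty(\phi(\theta'),\epsilon)$, represented as $\eta = \phi(\tilde\theta)$, $\tilde\theta \in \ParS$.

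Next, I construct recursively a family $(\lambda^0, \ldots, \lambda^L)$ with $\lambda^0 = \One_{V_0}$, $\lambda^L = \One_{V_L}$, which I use to build a rescaled version $\theta''$ of $\tilde\theta$ satisfying $w''_{v \to s^\theta_{\max}(v)} = w_{v \to s^\theta_{\max}(v)}$ for every hidden $v$. Starting from $\lambda^L = \One_{V_L}$ and going down layer by layer, for each $l \in \lb L-1, 1 \rb$ and each $v \in V_l$ I set
\[
\lambda^l_v \;=\; \frac{\lambda^{l+1}_{s^\theta_{\max}(v)} \, \tilde w_{v \to s^\theta_{\max}(v)}}{w_{v \to s^\theta_{\max}(v)}}.
\]
This is well defined (denominator nonzero) and by an easy downward induction each $\lambda^l_v$ is nonzero, using that $\tilde w_{v \to s^\theta_{\max}(v)} \neq 0$ from the previous paragraph. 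Define $\theta''$ by applying formula \eqref{equivalence def-2-main} to $\tilde\theta$ with these $\lambda$'s. Then $\theta'' \rsim \tilde\theta$, so $\phi(\theta'') = \phi(\tilde\theta) = \eta$, and by Corollary \ref{ParS is stable by r equivalence}, $\theta'' \in \ParS$. A direct check gives $w''_{v \to s^\theta_{\max}(v)} = w_{v \to s^\theta_{\max}(v)}$ for every hidden $v$, which is exactly the condition for $\theta''$ to lie in the image of $\rho_\theta$.

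Finally, setting $\tilde\tau = \rho_\theta^{-1}(\theta'') \in \ParF$, the inclusion $\theta'' \in \ParS$ gives $\tilde\tau \in U_\theta$, hence $\eta = \phi(\theta'') = \psi^\theta(\tilde\tau) \in V_\theta$. The main obstacle is really the combinatorial rescaling argument: one has to be sure that the recursive definition of $\lambda^l_v$ along the distinguished edges $v \to s^\theta_{\max}(v)$ produces only nonzero factors, which is precisely what Lemma \ref{tilde(theta)_v->v' neq 0 close to theta} buys us, and that the imposed boundary conditions $\lambda^0 = \One$, $\lambda^L = \One$ are compatible; the latter is automatic because no constraint is placed on $\lambda^0$ (edges out of $V_0$ are not fixed) and $\lambda^L = \One$ is used as the initialization of the recursion.
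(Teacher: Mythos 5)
Your proposal is correct and follows essentially the same route as the paper's proof: the same use of Lemma \ref{tilde(theta)_v->v' neq 0 close to theta} (applied to $\rho_\theta(\tau)$, whose weights on the fixed edges coincide with those of $\theta$) to get $\epsilon$, the same top-down recursive definition of the rescaling factors $\lambda^l_v$ along the edges $v \to s^\theta_{\max}(v)$, and the same conclusion via Corollary \ref{ParS is stable by r equivalence} and Proposition \ref{equivalence implies equal phi}. The only cosmetic difference is that you rescale $\tilde\theta$ into a full parameterization $\theta''$ lying in the image of $\rho_\theta$ and then set $\tilde\tau = \rho_\theta^{-1}(\theta'')$, whereas the paper defines $\tilde\tau \in \RR^{F_\theta}\times\RR^B$ directly and verifies $\rho_\theta(\tilde\tau) \rsim \tilde\theta$; these are the same construction.
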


\begin{proof}
Let us first construct $\epsilon$ and then consider an element of the set on the left of the inclusion and prove it belongs to $V_\theta$. Let $\theta \in \ParS$ and $\tau \in U_\theta$. For all $l \in \lb 1, L-1 \rb$, for all $v \in V_l$, by definition of $F_\theta$ and $\rho_\theta$, we have $\rho_\theta(\tau)_{v \rightarrow s^\theta_{\max}(v)} = w_{v \rightarrow s^\theta_{\max}(v)}$, and since $\theta \not\in S$, by definition of $s^\theta_{\max}$, $ w_{v \rightarrow s^\theta_{\max}(v)} \neq 0$, so according to Lemma \ref{tilde(theta)_v->v' neq 0 close to theta} there exists $\epsilon_v > 0$ such that for all $\tilde{\theta} \in \Par$, 
\[\| \phi(\rho_\theta(\tau)) - \phi(\tilde{\theta})\|_\infty < \epsilon_v \quad \Longrightarrow \quad \tilde{w}_{v \rightarrow s^\theta_{\max}(v)} \neq 0.\]
Let $\epsilon = \min_{v \in V_1 \cup \dots \cup V_{L-1}} \epsilon_v$.

Let us now show the inclusion: let $\tilde{\theta} \in \ParS$ such that $\| \phi(\rho_\theta(\tau)) - \phi(\tilde{\theta})\|_\infty < \epsilon$, and let us show that $\phi(\tilde{\theta}) \in V_\theta$. Notice first that for all $l \in \lb 1, L-1 \rb$ and $v \in V_l$, by definition of $\epsilon$, $w_{v \rightarrow s^\theta_{\max}(v)} \neq 0$ and \mbox{$\tilde{w}_{v \rightarrow s^\theta_{\max}(v)} \neq 0$}. We are going to define $\tilde{\tau} \in U_\theta$ such that $\rho_\theta(\tilde{\tau}) \overset{R}{\sim} \tilde{\theta}$, so that using Proposition \ref{equivalence implies equal phi}, $\psi^\theta(\tilde{\tau}) = \phi(\tilde{\theta})$.

Let us define recursively a family $(\lambda^0, \dots, \lambda^L) \in (\RR^*)^{V_0} \times \dots \times (\RR^*)^{V_L}$ as follows:
\begin{itemize}
\item we define $\lambda^L = \One_{V_L}$;
\item for all $l \in \lb 1, L-1 \rb$, for all $v \in V_l$, we define
\begin{equation}\label{proof V_theta is open}
    \lambda^l_v = \frac{\tilde{w}_{v \rightarrow s^\theta_{\max}(v)}}{w_{v \rightarrow s^\theta_{\max}(v)}} \lambda^{l+1}_{s^\theta_{\max}(v)} .
\end{equation}
\item we define finally $\lambda^0 = \One_{V_0}$.
\end{itemize}
Note that for all $l \in \lb 0, L \rb$ and for all $v \in V_l$, $\lambda^l_v \neq 0$. Also note that for all $l \in \lb 2, L \rb$, for all $v \in V_{l-1}$, reformulating \eqref{proof V_theta is open} in a way that will be useful later, we have
\begin{equation}\label{proof psi^t(U_t) is a neighborhood of t-1}
\frac{\lambda^{l}_{s^\theta_{\max}(v)}}{\lambda^{l-1}_v} = \frac{w_{v \rightarrow s^\theta_{\max}(v)}}{\tilde{w}_{v \rightarrow s^\theta_{\max}(v)}}.
\end{equation}

We then define $\tilde{\tau} \in \ParF$ by:
\begin{itemize}
\item for all $l \in \lb 1, L \rb$, for all $(v,v') \in ( V_{l-1} \times V_l )\cap F_\theta$, 
\begin{equation}\label{proof psi^t(U_t) is a neighborhood of t-2}
\tilde{\tau}_{v \rightarrow v'} = \frac{\lambda^l_{v'}}{\lambda^{l-1}_v}\tilde{w}_{v \rightarrow v'};
\end{equation}
\item for all $l \in \lb 1, L \rb$, for all $v \in V_l$, 
\begin{equation}\label{proof psi^t(U_t) is a neighborhood of t-3}
\tilde{\tau}_v = \lambda^l_v \tilde{b}_v.
\end{equation}
\end{itemize}

Let us show $\rho_\theta(\tilde{\tau}) \rsim \tilde{\theta}$. Indeed, let $l \in \lb 1, L \rb$ and let $(v,v') \in V_{l-1} \times V_l$.
If $v \in V_0$ or $v \in V_1 \cup \dots \cup V_{L-1}$ and $v' \neq s^\theta_{\max}(v)$, then by definition \eqref{definition of F_theta-main} of $F_\theta$, we have $v \rightarrow v' \in F_\theta$, so using \eqref{definition of rho_theta-main} and \eqref{proof psi^t(U_t) is a neighborhood of t-2} we have
\begin{equation}\label{proof psi^t(U_t) is a neighborhood of t-4}
\rho_\theta(\tilde{\tau})_{v \rightarrow v'} = \tilde{\tau}_{v \rightarrow v'} = \frac{\lambda^l_{v'}}{\lambda^{l-1}_v} \tilde{w}_{v \rightarrow v'}.
\end{equation}
If $v \in V_1 \cup \dots \cup V_{L-1}$ and $v' = s^\theta_{\max}(v)$, then by definition \eqref{definition of F_theta-main} of $F_\theta$, we have $v \rightarrow v' \in E \backslash F_\theta$, and since in that case, $l \geq 2$, using \eqref{definition of rho_theta-main} and \eqref{proof psi^t(U_t) is a neighborhood of t-1}, we see that
\begin{equation}\label{proof psi^t(U_t) is a neighborhood of t-5}
\rho_\theta(\tilde{\tau})_{v \rightarrow v'} = w_{v \rightarrow v'} = \frac{\lambda^l_{v'}}{\lambda^{l-1}_v} \tilde{w}_{v \rightarrow v'}.
\end{equation}
If $v \in B$, using \eqref{definition of rho_theta-main} and \eqref{proof psi^t(U_t) is a neighborhood of t-3}, we have
\begin{equation}\label{proof psi^t(U_t) is a neighborhood of t-6}
   \rho_\theta(\tilde{\tau})_v = \tilde{\tau}_v = \lambda^l_v \tilde{b}_v.
\end{equation}

Equations \eqref{proof psi^t(U_t) is a neighborhood of t-4}, \eqref{proof psi^t(U_t) is a neighborhood of t-5} and \eqref{proof psi^t(U_t) is a neighborhood of t-6} prove that 
\[\rho_\theta(\tilde{\tau}) \rsim \tilde{\theta}.\]

Using Corollary \ref{ParS is stable by r equivalence}, since $\tilde{\theta} \in \ParS$ and $\rho_\theta(\tilde{\tau}) \rsim \tilde{\theta}$, we also have $\rho_\theta(\tilde{\tau}) \in \ParS$. Since, by definition, $U_\theta = \rho_\theta^{-1}\left( \ParS \right)$, we have $\tilde{\tau} \in U_\theta$. We have shown
\[\Sigma_1^* \cap B_\infty(\psi^\theta(\tau), \epsilon) \ \subset \ V_\theta.\]
\end{proof}

The following proposition is necessary in order to show that $(V_\theta, (\psi^\theta)^{-1})_{\theta \in \ParS}$ is an atlas of $\Sigma_1^*$.
\begin{prop}\label{the differential of psi^theta is injective}
For all $\theta \in \ParS$, the function $\psi^{\theta}$ is $C^{\infty}$ and its differential $D{\psi^{\theta}}(\tau)$  is injective for all $\tau \in U_{\theta}$.
\end{prop}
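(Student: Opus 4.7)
That $\psi^\theta$ is $C^\infty$ follows at once from $\rho_\theta$ being affine and $\phi$ polynomial (each coordinate $\phi_{p,v_L}$ is a monomial in the weights and biases); hence $\psi^\theta = \phi \circ \rho_\theta$ is a polynomial map. For the injectivity of $D\psi^\theta(\tau)$ at an arbitrary $\tau \in U_\theta$, the plan is to fix $h \in \ParF$ with $D\psi^\theta(\tau) \cdot h = 0$, set $\theta' = \rho_\theta(\tau) \in \ParS$, and denote by $\tilde{h} \in \Par$ the lift $D\rho_\theta(\tau) \cdot h$; by definition of $\rho_\theta$, $\tilde{h}$ coincides with $h$ on the edges of $F_\theta$ and on the biases, and vanishes on the frozen edges $E \setminus F_\theta$. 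The chain rule yields $D\phi(\theta') \cdot \tilde{h} = 0$, and I will extract each component of $\tilde{h}$, showing it is zero, from a well-chosen path equation $(D\phi(\theta') \cdot \tilde{h})_{p,v_L} = 0$.

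Two structural facts drive the path selection. First, the max-chain edges $v \to s^\theta_{\max}(v)$ for hidden $v$ carry nonzero weights at $\theta'$ (they are inherited from $\theta \in \ParS$ and nonzero by construction of $s^\theta_{\max}$) and carry $\tilde{h} = 0$ by construction. Second, since $\theta' \in \ParS$, Proposition \ref{nonzero input and output paths} supplies, at any hidden neuron, an upstream prefix path with nonzero $\theta'$-weight. I first kill all bias components of $\tilde{h}$: the empty path $p = \beta$ immediately forces $\tilde{h}_{v_L} = 0$ for every $v_L \in V_L$; and for a hidden $v \in V_l$, the max-chain path $p = (v, s^\theta_{\max}(v), \ldots, v_{L-1}) \in \mathcal{P}_l$ with $v_L = s^\theta_{\max}(v_{L-1})$ makes every weight-edge contribution vanish, reducing the equation to $\tilde{h}_v$ times the product of the nonzero max-chain weights, hence $\tilde{h}_v = 0$.

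I then kill the weight components $\tilde{h}_{v_{l-1} \to v_l}$ for $(v_{l-1}, v_l) \in (V_{l-1} \times V_l) \cap F_\theta$ by induction on $l$ from $1$ to $L$. For each such target edge, use Proposition \ref{nonzero input and output paths} to pick $l' \in \lb 0, l-1 \rb$ and a prefix $(v_{l'}, \ldots, v_{l-1})$ with nonzero $\theta'$-weight, and form the path $p \in \mathcal{P}_{l'}$ consisting of this prefix, the target edge $v_{l-1} \to v_l$, and the max-chain from $v_l$ up to $V_{L-1}$ (with $v_L = s^\theta_{\max}(v_{L-1})$, or $v_L = v_l$ itself when $l = L$). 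In the resulting equation, the max-chain weight-edges past $v_l$ contribute zero (frozen), the prefix weight-edges strictly before $v_{l-1} \to v_l$ contribute zero either by being frozen or by the induction hypothesis (they end at layers strictly smaller than $l$), and the bias $\tilde{h}_{v_{l'}}$ already vanishes from the previous step. Only $\tilde{h}_{v_{l-1} \to v_l}$ remains, multiplied by the product of the nonzero prefix weights and the nonzero max-chain weights, forcing $\tilde{h}_{v_{l-1} \to v_l} = 0$.

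The main technical obstacle lies in the inductive step: one must handle separately the $l' = 0$ and $l' \geq 1$ forms of $\phi_{p, v_L}$ and ensure that the coefficient of $\tilde{h}_{v_{l-1} \to v_l}$ remains nonzero even when, for instance, $b'_{v_{l-1}}$ vanishes. This is precisely the role of Proposition \ref{nonzero input and output paths}: the freedom to anchor the path in a nonzero $\theta'$-weighted prefix as far upstream as needed is what makes the hypothesis $\theta' \in \ParS$ indispensable.
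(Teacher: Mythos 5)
Your proposal is correct and follows essentially the same route as the paper's proof: smoothness from polynomiality, then killing the bias components via the empty path $\beta$ and max-chain paths whose frozen weights $w_{v \rightarrow s^\theta_{\max}(v)}$ are nonzero, then an induction on the layer of the target edge using Proposition \ref{nonzero input and output paths} at $\rho_\theta(\tau)$ to anchor a nonzero prefix. The only difference is cosmetic: you phrase the computation as $D\phi(\rho_\theta(\tau)) \cdot \tilde{h} = 0$ via the chain rule, whereas the paper computes the partial derivatives of $\psi^\theta$ directly, which amounts to the same equations.
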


\begin{proof}
Let $\theta \in \ParS$. First of all, $\psi^\theta$ is a polynomial function as a composition of $\phi$ and $\rho_\theta$ which are both polynomial functions. So, $\psi^\theta$ is $C^\infty$.

In order to show the injectivity of the differential $D{\psi^{\theta}}(\tau)$ for all $\tau \in U_\theta$, let us compute the partial derivatives of $\psi^\theta_{p, v_L}(\tau)$. Let $p \in \mathcal{P}$ and $v_L \in V_L$. Using the definition of $\psi^\theta$ and $\phi$, three cases are possible.
\begin{itemize}
\item[Case 1.] The path $p$ is of the form $(v_0, v_1, \dots, v_{L-1})$. We have
\[\psi^\theta_{p, v_L}(\tau) =  \rho_\theta(\tau)_{v_0 \rightarrow v_1} \dots \rho_\theta(\tau)_{v_{L-1} \rightarrow v_L}.\]
\item[Case 2.] The path $p$ is of the form $(v_l, \dots, v_{L-1})$ with $l \in \lb 1, L-1 \rb$. We have, for all $\tau \in U_\theta$, 
\[\psi^\theta_{p,v_L}(\tau) = \tau_{v_l} \rho_\theta(\tau)_{v_l \rightarrow v_{l+1}} \dots \rho_\theta(\tau)_{v_{L-1} \rightarrow v_L}.\]
\item[Case 3.] For $p = \beta$, we have, for all $\tau \in U_\theta$, 
\[\psi^\theta_{p, v_L}(\tau) = \tau_{v_L}.\]
\end{itemize}

Let $(v,v') \in F_\theta$, and let us compute $\frac{\partial \psi^\theta_{p,v_L}}{\partial \tau_{v \rightarrow v'}}(\tau)$.
\begin{itemize}
\item[Case 1.] We have $p = (v_0, \dots, v_{L-1}) \in \mathcal{P}_0$. If $\{v,v'\} \subset \{v_0, \dots, v_L \}$, there exists $l \in \lb 0 , L-1 \rb$ such that $(v,v') = (v_l, v_{l+1})$, in which case, since $(v,v') \in F_\theta$, $\rho_\theta(\tau)_{v_{l} \rightarrow v_{l+1}} = \tau_{v_{l} \rightarrow v_{l+1}}$ and
\begin{equation}\label{proof Dpsi is injective-1}
    \frac{\partial \psi^\theta_{p, v_L}}{\partial \tau_{v \rightarrow v'}}(\tau) = \prod_{\substack{k \in \lb 0 , L-1 \rb \\ k \neq l }} \rho_\theta(\tau)_{v_k \rightarrow v_{k+1}}.
\end{equation}
Otherwise if $\{v,v'\} \not\subset \{v_0, \dots, v_L\}$, 
\[\frac{\partial \psi^\theta_{p,v_L}}{\partial \tau_{v \rightarrow v'}}(\tau) =0.\]
\item[Case 2.] We have $p = (v_l, \dots, v_{L-1}) \in \mathcal{P}_l$, for $l \in \lb 1 , L-1 \rb$. If $\{v,v'\} \subset \{v_l, \dots, v_L\}$, there exists $l' \in \lb l, L-1 \rb$ such that $(v,v') = (v_{l'}, v_{l'+1})$, in which case, since $(v,v') \in F_\theta$, $\rho_\theta(\tau)_{v_{l'} \rightarrow v_{l'+1}} = \tau_{v_{l'} \rightarrow v_{l'+1}}$ and 
\begin{equation}\label{proof Dpsi is injective-2}
    \frac{\partial \psi^\theta_{p, v_L}}{\partial \tau_{v \rightarrow v'}}(\tau) = \tau_{v_l} \prod_{\substack{k \in \lb l, L-1 \rb \\ k \neq l'}} \rho_\theta(\tau)_{v_k \rightarrow v_{k+1}}.
\end{equation}
Otherwise if $\{v,v'\} \not\subset \{v_l, \dots, v_L\}$,
\[\frac{\partial \psi^\theta_{p,v_L}}{\partial \tau_{v \rightarrow v'}}(\tau) = 0.\]
\item[Case 3.] We have $p = \beta$. In that case, we have
\[\frac{\partial \psi^\theta_{p,v_L}}{\partial \tau_{v \rightarrow v'}}(\tau) = 0.\]
\end{itemize}

Now let $v \in B$, and let us compute $\frac{\partial \psi^\theta_{p,v_L}}{\partial \tau_v}(\tau)$.
\begin{itemize}
\item[Case 1.] We have $p = (v_0, \dots, v_{L-1}) \in \mathcal{P}_0$ and
\[\frac{\partial \psi^\theta_{p,v_L}}{\partial \tau_v}(\tau) =0.\]
\item[Case 2.] We have $p = (v_l, \dots, v_{L-1}) \in \mathcal{P}_l$ for $l \in \lb 1, L-1 \rb$. If $v=v_l$, then
\[\frac{\partial \psi^\theta_{p,v_L}}{\partial \tau_v}(\tau) = \prod_{k \in \lb l, L-1 \rb} \rho_\theta(\tau)_{v_k \rightarrow v_{k+1}}.\]
If $v \neq v_l$, 
\[\frac{\partial \psi^\theta_{p,v_L}}{\partial \tau_v}(\tau) = 0.\]
\item[Case 3.] We have $p = \beta$ and
\[\frac{\partial \psi^\theta_{p,v_L}}{\partial \tau_v}(\tau) = \begin{cases} 1 & \text{if } v = v_L \\ 0 & \text{if } v \neq v_L. \end{cases}\]
\end{itemize}

Now that we know the partial derivatives, let us show $D\psi^\theta(\tau)$ is injective for all $\tau \in U_\theta$. Let $\tau \in U_\theta$ and let $h \in \ParF$ such that
\[D\psi^\theta(\tau) \cdot h = 0.\]
We need to prove that $h=0$.

Let us show first that for all $v \in B$, $h_v = 0$. Let $l \in \lb 1, L - 1 \rb$, and let $v_l \in V_l$ so that $v_l$ is arbitrary in $B \backslash V_L$. Let us define $v_{l+1} = s^\theta_{max}(v_l)$, then $v_{l+2} = s^\theta_{\max}(v_{l+1})$, and so on up to $v_L = s^\theta_{max}(v_{L-1})$. Let us denote $p = (v_l, \dots, v_{L-1})$. We have
\[\psi^\theta_{p,v_L} (\tau) = \tau_{v_l} w_{v_l \rightarrow v_{l+1}} \dots w_{v_{L-1} \rightarrow v_L},\]
so
\[\left[D\psi^\theta(\tau) \cdot h \right]_{p,v_L} = \frac{\partial \psi^\theta_{p, v_L}}{\partial \tau_{v_l}}(\tau) h_{v_l} = w_{{v_l} \rightarrow v_{l+1}}  \dots w_{v_{L-1} \rightarrow v_{L}} h_{v_l}.\]
Since $\left[D\psi^\theta(\tau) \cdot h \right]_{p,v_L} = 0$ and $w_{v_l \rightarrow v_{l+1}} \dots w_{v_{L-1} \rightarrow v_{L}} \neq 0$, we conclude that $h_{v_l} = 0$. Now let $v_L \in V_L$. We consider $p = \beta$ and we have
\[\left[D\psi^\theta(\tau) \cdot h \right]_{p,v_L} = h_{v_L}.\]
Since $\left[D\psi^\theta(\tau) \cdot h \right]_{p,v_L} = 0$, we also conclude in that case that $h_{v_L} = 0$.

Let us now show that for all $(v,v')\in F_\theta$, $h_{v \rightarrow v'} = 0$. Let $l \in \lb 1, L \rb$ and let $(v_{l-1},v_l) \in (V_{l-1} \times V_l) \cap F_\theta$ so that $(v_{l-1}, v_l)$ is arbitrary in $F_\theta$. If $l=1$, we define $p_i = (v_{l-1})$ and we have by convention $\theta_{p_i} =1 \neq 0$. If $l> 1$, using Proposition \ref{nonzero input and output paths} there exist $l' \in \lb 0, l-1 \rb$ and a path $p_i = (v_{l'}, \dots, v_{l-1})$ such that $\rho_\theta(\tau)_{p_i} \neq 0$. If $l < L$, we define $v_{l+1}= s^\theta_{\max}(v_{l})$, then $v_{l+2} = s^\theta_{\max}(v_{l+1})$, and so on up to $v_L = s^\theta_{\max}(v_{L-1})$, and we denote $p = p_i \cup (v_{l-1},v_l,  \dots, v_{L-1})$. If $l=L$, we denote $p = p_i$. Let us show the following expression.
\begin{equation}\label{proof Dpsi is injective}
\left[D{\psi^\theta}(\tau) \cdot h \right]_{p,v_L}  =  \sum_{\substack{k \in \lb l', l-1 \rb \\ (v_k, v_{k+1})\in F_\theta}} \frac{\partial \psi^\theta_{p, v_L}}{\partial \tau_{v_k \rightarrow v_{k+1}}}(\tau) h_{v_k \rightarrow v_{k+1}}
\end{equation} 
Indeed, if $l' \geq 1$, we have
\[\psi^\theta_{p,v_L}(\tau) = \tau_{v_{l'}}\prod_{k=l'}^{l-1} \rho_\theta(\tau)_{v_k \rightarrow v_{k+1}} \prod_{k=l}^{L-1} w_{v_k \rightarrow v_{k+1}},\]
with the classical convention that if $l = L$, the product on the right is empty thus equal to $1$.
We thus have
\begin{align*}
\left[D{\psi^\theta}(\tau) \cdot h \right]_{p,v_L} & = \frac{\partial \psi^\theta_{p,v_L}}{\partial \tau_{v_{l'}}}(\tau) h_{v_{l'}} + \sum_{\substack{k \in \lb l', l-1 \rb \\ (v_k, v_{k+1})\in F_\theta}} \frac{\partial \psi^\theta_{p, v_L}}{\partial \tau_{v_k \rightarrow v_{k+1}}}(\tau) h_{v_k \rightarrow v_{k+1}}  \\
& = \sum_{\substack{k \in \lb l', l-1 \rb \\ (v_k, v_{k+1})\in F_\theta}} \frac{\partial \psi^\theta_{p, v_L}}{\partial \tau_{v_k \rightarrow v_{k+1}}}(\tau)  h_{v_k \rightarrow v_{k+1}} ,
\end{align*}
since we have already shown that $h_{v_{l'}} = 0$.

If $l' =0$, we have
\[\psi^\theta_{p,v_L}(\tau) = \prod_{k=0}^{l-1} \rho_\theta(\tau)_{v_k \rightarrow v_{k+1}} \prod_{k=l}^{L-1} w_{v_k \rightarrow v_{k+1}},\]
with the same convention that when $l=L$ the product on the right is equal to $1$, so again
\begin{align*}
\left[D{\psi^\theta}(\tau) \cdot h \right]_{p,v_L} & =  \sum_{\substack{k \in \lb 0, l-1 \rb \\ (v_k, v_{k+1})\in F_\theta}} \frac{\partial \psi^\theta_{p, v_L}}{\partial \tau_{v_k \rightarrow v_{k+1}}}(\tau)  h_{v_k \rightarrow v_{k+1}}.
\end{align*}
This concludes the proof of \eqref{proof Dpsi is injective}.

We can now show by induction the following statement, for $l \in \lb 0 , L \rb$.
\begin{equation}\tag{$H_l$}
\forall l' \in \lb 1, l \rb , \ \forall (v,v') \in \left( V_{l'-1}\times V_{l'}\right) \cap F_\theta, \ h_{v \rightarrow v'} = 0.
\end{equation}
Since $\lb 1,0 \rb = \emptyset$, $H_0$ is trivially true. Now let $l \in \lb 1, L \rb$ and suppose $H_{l-1}$ is true. We consider $(v_{l-1},v_l) \in (V_{l-1} \times V_l) \cap F_\theta$, and $l' \in \lb 0, l\rb$, $p_i$ and $p$ just as before. Since for all $k \in \lb 0, l-2 \rb$, the induction hypothesis guarantees that $h_{v_k \rightarrow v_{k+1}}=0$,  \eqref{proof Dpsi is injective} becomes
\begin{align*}\left[D{\psi^\theta}(\tau) \cdot h \right]_{p,v_L} = \frac{\partial \psi^\theta_{p, v_L}}{\partial \tau_{v_{l-1} \rightarrow v_{l}}}(\tau) h_{v_{l-1} \rightarrow v_{l}}.
\end{align*}
Using \eqref{proof Dpsi is injective-1} and \eqref{proof Dpsi is injective-2}, we obtain
\begin{align*}
 \left[D{\psi^\theta}(\tau) \cdot h \right]_{p,v_L}   = \begin{cases} \rho_\theta(\tau)_{p_i} w_{v_l \rightarrow v_{l+1}} \dots w_{v_{L-1} \rightarrow v_L} h_{v_{l-1} \rightarrow v_l} & \text{if } l < L \\ \rho_\theta(\tau)_{p_i} h_{v_{l-1} \rightarrow v_l} & \text{if } l=L.\end{cases}
\end{align*}
Since $\rho_\theta(\tau)_{p_i} \neq 0$, and for $l < L$, $ w_{v_l \rightarrow v_{l+1}} \dots w_{v_{L-1} \rightarrow v_L} \neq 0$, we conclude that $h_{v_{l-1} \rightarrow v_l} = 0$ and that $H_l$ holds.

This induction leads to the conclusion that $h = 0$ and $D\psi^\theta(\tau)$ is injective.
\end{proof}

We are now equipped to prove Theorem \ref{Sigma_1^* is a smooth manifold-main}, which we restate here.

\begin{thm}\label{Sigma_1^* is a smooth manifold}
$\Sigma^1_*$ is a smooth manifold of $\RR^{\mathcal{P} \times V_L}$ of dimension 
\[|F_{\theta}| + |B| = N_0 N_1 + N_1 N_2 + \dots + N_{L-1}N_L + N_L,\]
and the family $(V_\theta, (\psi^\theta)^{-1})_{\theta \in \ParS}$ is an atlas.
\end{thm}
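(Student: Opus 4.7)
My plan is to verify the four axioms of a smooth atlas directly, exploiting the propositions already established in the section. Concretely, I would organize the argument around the four steps: covering, openness of the chart domains, bijectivity of charts onto open subsets of Euclidean space, and smoothness of the transition maps.

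First I would handle the covering. For any $\eta \in \Sigma_1^*$, by definition there exists $\theta \in \ParS$ with $\eta = \phi(\theta)$, and since $\rho_\theta(\tau_\theta) = \theta$ we get $\eta = \psi^\theta(\tau_\theta) \in V_\theta$. So $\Sigma_1^* = \bigcup_{\theta \in \ParS} V_\theta$. The openness of each $V_\theta$ in the topology induced from $\RR^{\mathcal{P}\times V_L}$ is exactly Proposition \ref{psi^t(U_t) is a neighborhood of t}. That $(\psi^\theta)^{-1} : V_\theta \to U_\theta$ is a homeomorphism with $U_\theta$ open in $\ParF$ comes from Proposition \ref{psi^theta is a homeomorphism} (and the fact that $U_\theta$ is the preimage of the open set $\ParS$ under the continuous affine map $\rho_\theta$).

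The main obstacle is the smoothness of the transition maps, since the charts $(\psi^\theta)^{-1}$ are defined only implicitly as inverses of polynomial maps. Given $\theta, \theta' \in \ParS$ with $V_\theta \cap V_{\theta'} \neq \emptyset$, I need $(\psi^{\theta'})^{-1} \circ \psi^\theta$ to be $C^\infty$ on the open set $(\psi^\theta)^{-1}(V_\theta \cap V_{\theta'}) \subset U_\theta$. Since $\psi^\theta$ is $C^\infty$ as a polynomial map, it suffices to prove that $(\psi^{\theta'})^{-1}$ is $C^\infty$ on $V_{\theta'}$. For this I would invoke the local immersion theorem: by Proposition \ref{the differential of psi^theta is injective}, the differential $D\psi^{\theta'}(\tau)$ is injective for every $\tau \in U_{\theta'}$; combined with smoothness of $\psi^{\theta'}$, the theorem produces, in a neighborhood of any $\tau_0 \in U_{\theta'}$, a local smooth left inverse $g$ defined on an open ball $B$ of $\psi^{\theta'}(\tau_0)$ in $\RR^{\mathcal{P}\times V_L}$ satisfying $g \circ \psi^{\theta'} = \Id$ near $\tau_0$. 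On the open set $B \cap V_{\theta'}$ (nonempty by Proposition \ref{psi^t(U_t) is a neighborhood of t}), the continuity of the global inverse $(\psi^{\theta'})^{-1}$ forces $(\psi^{\theta'})^{-1} = g$ after possibly shrinking $B$, which yields the desired $C^\infty$ regularity locally, hence globally on $V_{\theta'}$.

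Finally, for the dimension, it is enough to notice that each $U_\theta$ is open in $\ParF$, whose dimension equals $|F_\theta| + |B|$. The announced explicit formula
\[
|F_\theta| + |B| = N_0 N_1 + N_1 N_2 + \dots + N_{L-1} N_L + N_L
\]
was already derived in \eqref{dimension of Sigma_1^*} by removing the $N_1 + \dots + N_{L-1}$ fixed outgoing edges and then adding back the $|B| = N_1 + \dots + N_L$ bias coordinates. Note that $|F_\theta|$ does not actually depend on $\theta$, so the dimension is a well-defined constant, as required for a manifold. With the four chart axioms verified and the dimension identified, the atlas $(V_\theta, (\psi^\theta)^{-1})_{\theta \in \ParS}$ endows $\Sigma_1^*$ with the structure of a smooth manifold of the claimed dimension.
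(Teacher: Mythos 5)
Your proposal is correct and follows essentially the same route as the paper: covering, openness of $V_\theta$ and the homeomorphism property come from the same propositions, and the smoothness of the transition maps is obtained from the injectivity of $D\psi^{\theta'}$ plus a local smooth left inverse, identified with the global inverse $(\psi^{\theta'})^{-1}$ on a small ball via continuity of that inverse. The only difference is cosmetic: you invoke the local immersion theorem as a black box, whereas the paper inlines its proof by extending $\psi^\theta$ with a complementary subspace $G$ (the map $\varphi_\theta$) and applying the inverse function theorem, then projecting.
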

\begin{proof}

Our goal is to show that the family $(V_\theta, (\psi^\theta)^{-1})_{\theta \in \ParS}$ is a smooth atlas, which will show that $\Sigma_1^*$ is a smooth manifold.

We already know from Proposition \ref{psi^t(U_t) is a neighborhood of t} that for any $\theta \in \ParS$, $V_\theta$ is an open subset of $\Sigma_1^*$ and from Proposition \ref{psi^theta is a homeomorphism} that $(\psi^\theta)^{-1}$ is a homeomorphism from $V_\theta$ onto $U_\theta$. Since for any $\theta \in \ParS$, $\thF \in U_\theta$, we have $\phi(\theta) = \psi^\theta(\thF) \in V_\theta$ which shows that $(V_\theta)_{\theta \in \ParS}$ covers $\Sigma_1^*$.

Let $\theta, \tilde{\theta} \in \ParS$, let us show that the transition map
\[ (\psi^\theta)^{-1} \circ \psi^{\tilde{\theta}} \ : \ {(\psi^{\tilde{\theta}})^{-1}(V_\theta \cap V_{\tilde{\theta}})} \ \rightarrow \ {(\psi^{\theta})^{-1}(V_\theta \cap V_{\tilde{\theta}})}  \]
is smooth.

Let $\tau_0 \in U_{\tilde{\theta}}$ such that $\tau_0 \in {(\psi^{\tilde{\theta}})^{-1}(V_\theta \cap V_{\tilde{\theta}})}$. We are going to show that the function $(\psi^\theta)^{-1} \circ \psi^{\tilde{\theta}}$ is $C^\infty$ in a neighborhood of $\tau_0$.

For ease of reading, let us denote $\psi^{\tilde{\theta}}(\tau_0)$ by $\eta_0$. By definition, $\eta_0 \in V_\theta \cap V_{\tilde{\theta}}$. In particular, since $\eta_0 \in V_\theta$, we can define $\tau_1 = (\psi^\theta)^{-1}(\eta_0)$. See Figure \ref{chart figure} for a representation.

\begin{figure}
    \centering
    \includegraphics[scale=0.6]{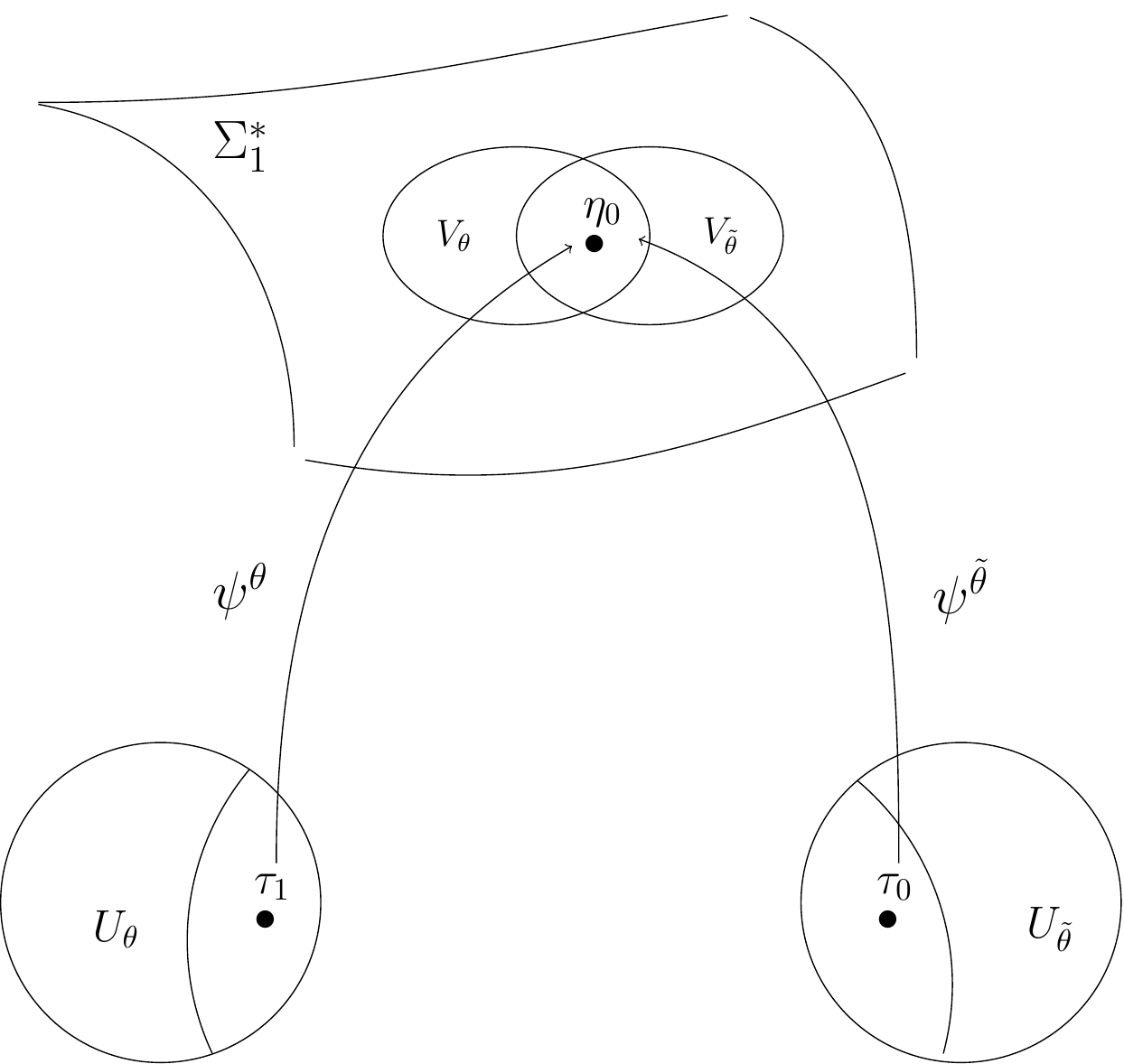}
    \caption{The points $\eta_0$, $\tau_0$, $\tau_1$ and the inverse charts $\psi^\theta$ and $\psi^{\tilde{\theta}}$.}
    \label{chart figure}
\end{figure}

Let $T = \Im D\psi^\theta(\tau_1)$, and let us consider a linear subspace $G$ such that $T \oplus G = \RR^{\mathcal{P}\times V_L}$. Let $N_C = |\mathcal{P}| N_L - |F_\theta| - |B| = \dim(G)$. Let $i : \RR^{N_C} \rightarrow G$ be linear and invertible. Let us consider the function
\[\fonction{\varphi_\theta}{U_{\theta}\times \RR^{N_C}}{\RR^{\mathcal{P}\times V_L}}{(\tau,x)}{\psi^\theta(\tau) + i(x).} \]
We are going to show that there exist an open neighborhood $\tilde{U}$ of $(\tau_1,0)$ in $(\RR^{F_\theta} \times \RR^B) \times \RR^{N_C}$ and an open neighborhood $\tilde{V}$ of $\eta_0$ in $\RR^{\mathcal{P} \times V_L}$ such that $\varphi_\theta$ is a $C^\infty$ diffeomorphism from $\tilde{U}$ onto $\tilde{V}$ satisfying
\[ \varphi_\theta\bigg(\left[(\ParF ) \times \{0 \}^{N_C}\right] \cap \tilde{U} \bigg) = \Sigma_1^* \cap \tilde{V}.\]
Let us first show that $\varphi_\theta$ is a $C^\infty$-diffeomorphism from a neighborhood of $(\tau_1,0)$ in $(\RR^{F_\theta} \times \RR^B) \times \RR^{N_C}$ onto a neighborhood of $\eta_0$ in $\RR^{\mathcal{P} \times V_L}$.
As shown in Proposition \ref{the differential of psi^theta is injective}, $\psi^\theta$ is $C^\infty$ and $i$ is a linear function, so $\varphi_\theta$ is $C^\infty$. Let us prove that the differential $D \varphi_\theta (\tau_1,0)$ is injective. For all $(\tau,x) \in \left( \RR^{F_\theta} \times \RR^B \right) \times \RR^{N_C}$,
\[D\varphi_\theta(\tau_1,0) \cdot (\tau, x) = D\psi^\theta(\tau_1) \cdot \tau + i(x).\]
Since $D\psi^\theta(\tau_1) \cdot \tau \in T$, $i(x) \in G$, and $T$ and $G$ are in direct sum, if $D\varphi_\theta(\tau_1,0) \cdot (\tau, g) = 0$, then we have
\[\begin{cases}D\psi^\theta(\tau_1) \cdot \tau = 0 \\ i(x) = 0. \end{cases}\]
Since as shown in Proposition \ref{the differential of psi^theta is injective} $D\psi^\theta(\tau_1)$ is injective, and since $i$ is invertible, we have
\[(\tau,x) = (0,0).\]
Hence, $D\varphi_\theta(\tau_1,0)$ is injective. Since $\dim(\left( \RR^{F_\theta} \times \RR^B \right) \times \RR^{N_C}) = |F_\theta| + |B| + N_C = | \mathcal{P} | N_L$, the differential $D\varphi_\theta(\tau_1,0)$ is bijective. Using the inverse function theorem, there exists an open set $U \subset U_\theta  \times \RR^{N_C}$ containing $(\tau_1,0)$, an open set $V \subset \RR^{\mathcal{P} \times V_L}$ containing $\eta_0$ such that $\varphi_\theta$ is a $C^{\infty}$-diffeomorphism from $U$ onto $V$.

We have 
\[\varphi_\theta\bigg(\left[(\ParF ) \times \{0 \}^{N_C}\right] \cap U \bigg) \quad \subset \quad V_\theta \cap V .\]
In fact, if $V$ is small enough, this inclusion is an equality. We are going to construct open subsets $\tilde{U} \subset U$ and $\tilde{V} \subset V$ so that it is the case.
Let us define 
\[O = \{ \tau \in U_\theta, (\tau, 0) \in U \}.\]
Since $U$ is an open set containing $(\tau_1,0)$, $O$ is an open set containing $\tau_1 = (\psi^\theta)^{-1}(\eta_0)$. Since, according to Proposition \ref{psi^theta is a homeomorphism}, $\psi^\theta$ is a homeomorphism, $\psi^\theta(O)$ is an open subset of $V_\theta$ so there exists $\epsilon > 0$ such that 
\begin{equation}\label{proof Sigma_1^* is a manifold - first inclusion}
V_\theta \cap B_\infty(\eta_0, \epsilon) \ \subset \ \psi^\theta( O).
\end{equation}
We can now define $\tilde{V} = V \cap B_\infty ( \eta_0, \epsilon)$, and $\tilde{U} = \{ ( \tau, x) \in U, \ \varphi_\theta(\tau,x) \in \tilde{V} \}$, which are open sets such that $(\tau_1,0) \in \tilde{U}$, $\eta_0 \in \tilde{V}$, and $\varphi_\theta$ is a $C^\infty$-diffeomorphism from $\tilde{U}$ onto $\tilde{V}$. Let us show that
\begin{equation}\label{proof Sigma_1^* is a manifold - equality}
\varphi_\theta\bigg(\left[(\ParF ) \times \{0 \}^{N_C}\right] \cap \tilde{U} \bigg) = V_\theta \cap \tilde{V}.
\end{equation}
The direct inclusion is immediate: if $(\tau, 0) \in \left[(\ParF ) \times \{0 \}^{N_C}\right] \cap \tilde{U}$, then
\[\varphi_\theta(\tau, 0) = \psi^\theta(\tau) \in V_\theta \cap \tilde{V}.\]
For the reciprocal inclusion, if $\tau \in U_\theta$ is such that $\psi^\theta(\tau) \in V_\theta \cap \tilde{V}$, then by definition of $\epsilon$ and $\tilde{V}$, \eqref{proof Sigma_1^* is a manifold - first inclusion} guarantees, since $\psi^\theta$ is injective, that $\tau \in O$. By definition of $O$, we have $(\tau,0) \in U$, and since
\[\varphi_\theta(\tau, 0) = \psi^\theta (\tau) \in \tilde{V},\]
this shows $(\tau, 0) \in \tilde{U}$. This shows the reciprocal inclusion, and thus \eqref{proof Sigma_1^* is a manifold - equality} holds.

Let us now define
\[\fonction{P_\theta}{\ParF \times \RR^{N_C}}{\ParF}{(\tau, x)}{\tau}\]
the restriction to the first component, and let us observe that over $V_\theta \cap \tilde{V}$, we have 
\begin{equation}\label{proof Sigma_1^* is a manifold - inversion of varphi_theta}
    P_\theta \circ (\varphi_\theta)^{-1} = (\psi^\theta)^{-1}.
\end{equation}
Indeed, if $\eta \in V_\theta \cap \tilde{V}$, then by \eqref{proof Sigma_1^* is a manifold - equality} there exists $\tau \in U_\theta$ such that $(\tau, 0) \in \tilde{U}$ and $\varphi_\theta(\tau, 0) = \eta$. Since  $\varphi_\theta(\tau, 0) = \psi^\theta(\tau)$, this shows that $\tau = (\psi^\theta)^{-1}(\eta)$ and thus 
\[(\psi^\theta)^{-1}(\eta) = P_\theta(\tau,0) =  P_\theta \circ (\varphi_\theta)^{-1}(\eta) .\]

Now recall that $\eta_0 = \psi^{\tilde{\theta}}(\tau_0)$. By continuity of $\psi^{\tilde{\theta}}$, there exists $\epsilon ' >0$ such that $B_\infty ( \tau_0, \epsilon') \subset (\psi^{\tilde{\theta}})^{-1}(V_\theta \cap V_{\tilde{\theta}})$ and 
\[ \psi^{\tilde{\theta}} (B_\infty(\tau_0, \epsilon ')) \subset \tilde{V}.\]
For any $\tau \in B_\infty(\tau_0, \epsilon ')$, we have $\psi^{\tilde{\theta}}(\tau) \in V_\theta \cap \tilde{V}$ so, as we just proved with \eqref{proof Sigma_1^* is a manifold - inversion of varphi_theta}, $(\psi^\theta)^{-1} \circ \psi^{\tilde{\theta}}(\tau) = P_\theta \circ (\varphi_\theta)^{-1} \circ \psi^{\tilde{\theta}}(\tau)$. Since the functions $\psi^{\tilde{\theta}}$, $(\varphi_\theta)^{-1}$ and $P_\theta$ are all $C^\infty$, we conclude that the transition map $(\psi^\theta)^{-1} \circ \psi^{\tilde{\theta}}$ is $C^\infty$ over $B_\infty(\tau_0, \epsilon')$, for all $\tau_0 \in (\psi^{\tilde{\theta}})^{-1}(V_\theta \cap V_{\tilde{\theta}})$. We conclude that $(\psi^\theta)^{-1} \circ \psi^{\tilde{\theta}}$ is $C^\infty$ over $(\psi^{\tilde{\theta}})^{-1}(V_\theta \cap V_{\tilde{\theta}})$.

We have showed that $(V_\theta, (\psi^{\theta})^{-1})_{\theta \in \ParS}$ is a smooth atlas, and thus that $\Sigma_1^*$ is a smooth submanifold of $\RR^{\mathcal{P} \times V_L}$. As computed in \eqref{dimension of Sigma_1^*}, its dimension is
\[|F_\theta| + |B| = N_0N_1 + N_1 N_2  + \dots + N_{L-1}N_L + N_L.\]
\end{proof}

\section{Conditions of local identifiability}\label{Conditions of local identifiability-sec}

Let us restate (using Definition \ref{def:local:identif}) and prove Theorem \ref{local identifiability using Sigma1*-main}.
\begin{thm}\label{local identifiability using Sigma1*}
For any $X \in \RR^{n \times V_0}$ and $\theta \in \ParSX$, the two following statements are equivalent.
\begin{itemize}
\item[i)]$\theta$ is locally identifiable from $X$.

\item[ii)]There exists $\epsilon > 0$ such that $B_\infty(\phi(\theta), \epsilon) \cap \Sigma_1^* \cap N(X,\theta) = \{\phi(\theta)\}$.
\end{itemize}
\end{thm}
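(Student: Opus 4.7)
The plan is to combine three ingredients: since $\theta \notin \Delta_X$, Proposition \ref{alpha_X is piecewise-constant} implies that $A(X,\tilde{\theta}) = A(X,\theta)$ for $\tilde{\theta}$ in a neighborhood of $\theta$, so the identity $f_\theta(X) - f_{\tilde{\theta}}(X) = A(X,\theta)(\phi(\theta) - \phi(\tilde{\theta}))$ holds locally; since $\theta \in \ParS$ and $S$ is closed, Propositions \ref{equal phi implies resc equivalence} and \ref{rescaling equivalent and close implies equivalent} are available for $\tilde{\theta}$ sufficiently close to $\theta$; and the chart $(V_\theta, (\psi^\theta)^{-1})$ provides a continuous local lift from $\Sigma_1^*$ back into parameter space. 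The pivotal local equivalence I will exploit is: for $\tilde{\theta}$ in a small enough neighborhood of $\theta$, $f_\theta(X) = f_{\tilde{\theta}}(X)$ if and only if $\phi(\tilde{\theta}) \in N(X,\theta)$.

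For the direction $ii) \Rightarrow i)$, I pick $\epsilon' > 0$ so that every $\tilde{\theta} \in B_\infty(\theta, \epsilon')$ satisfies simultaneously $\tilde{\theta} \in \ParS$, $A(X,\tilde{\theta}) = A(X,\theta)$, $\| \phi(\tilde{\theta}) - \phi(\theta) \|_\infty < \epsilon$ (by continuity of $\phi$), and the smallness required by Proposition \ref{rescaling equivalent and close implies equivalent}. If $f_\theta(X) = f_{\tilde{\theta}}(X)$, then the pivotal equivalence places $\phi(\tilde{\theta})$ in $\Sigma_1^* \cap N(X,\theta) \cap B_\infty(\phi(\theta), \epsilon) = \{\phi(\theta)\}$, so $\phi(\tilde{\theta}) = \phi(\theta)$. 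Proposition \ref{equal phi implies resc equivalence} then gives $\theta \overset{R}{\sim} \tilde{\theta}$, and Proposition \ref{rescaling equivalent and close implies equivalent} upgrades this to $\theta \sim \tilde{\theta}$.

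For $i) \Rightarrow ii)$, I argue by contradiction: if no such $\epsilon$ works, there is a sequence $\eta_n \in \Sigma_1^* \cap N(X,\theta) \setminus \{\phi(\theta)\}$ with $\eta_n \to \phi(\theta)$. The crux is to lift each $\eta_n$ to a parameter vector converging to $\theta$, which is not automatic because $\phi$ collapses rescaling orbits. This is precisely what the local chart resolves: by Proposition \ref{psi^t(U_t) is a neighborhood of t}, $V_\theta$ is an open neighborhood of $\phi(\theta)$ in $\Sigma_1^*$, so $\eta_n \in V_\theta$ for $n$ large; setting $\tau_n = (\psi^\theta)^{-1}(\eta_n)$ and $\tilde{\theta}_n = \rho_\theta(\tau_n)$, the continuity of $(\psi^\theta)^{-1}$ (Proposition \ref{psi^theta is a homeomorphism}) and of $\rho_\theta$ yields $\tau_n \to \thF$ and hence $\tilde{\theta}_n \to \theta$. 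Since $\phi(\tilde{\theta}_n) = \eta_n \in N(X,\theta)$, the pivotal equivalence forces $f_\theta(X) = f_{\tilde{\theta}_n}(X)$ for $n$ large, so local identifiability gives $\theta \sim \tilde{\theta}_n$, whence $\phi(\tilde{\theta}_n) = \phi(\theta)$, contradicting $\eta_n \neq \phi(\theta)$.

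The only genuinely delicate step is the lifting in $i) \Rightarrow ii)$; the smooth manifold structure of $\Sigma_1^*$ established in Section \ref{The smooth manifold Sigma_1^*-sec-main}, and more specifically the continuity of the inverse chart $(\psi^\theta)^{-1}$, is exactly what makes this lifting possible. Everything else reduces to continuity arguments and straightforward invocation of the previously proved propositions.
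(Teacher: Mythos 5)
Your proof is correct and takes essentially the same route as the paper: both directions rest on the same ingredients (local constancy of $\alpha(X,\cdot)$ off $\Delta_X$, the linear relation $f_{\tilde{\theta}}(X)=A(X,\theta)\cdot\phi(\tilde{\theta})$, Propositions \ref{equal phi implies resc equivalence} and \ref{rescaling equivalent and close implies equivalent}, and the chart $(V_\theta,(\psi^\theta)^{-1})$ to lift points of $\Sigma_1^*$ back to parameters near $\theta$). The only difference is cosmetic: for $i)\Rightarrow ii)$ you argue by contradiction with a sequence $\eta_n\to\phi(\theta)$, whereas the paper constructs $\epsilon'$ directly from the continuity of $\rho_\theta\circ(\psi^\theta)^{-1}$ at $\phi(\theta)$; the substance is identical.
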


\begin{proof}
\item[$i) \Rightarrow ii)$] Suppose $i)$ is satisfied for some $\epsilon_1 > 0$. We first construct $\epsilon' > 0$ and then consider $\eta \in B_\infty(\phi(\theta), \epsilon') \cap \Sigma_1^* \cap N(X,\theta)$, and we prove that $\eta = \phi(\theta)$. Since $\theta \in \ParSX$ and since, according to Proposition \ref{alpha_X is piecewise-constant}, $\Delta_X$ is closed, there exists $\epsilon_2 > 0$ such that for any $\tilde{\theta} \in B_\infty(\theta, \epsilon_2)$, 
\[\alpha(X, \theta) = \alpha(X , \tilde{\theta}),\]
i.e.
\begin{equation*}
A(X,\theta) = A(X ,\tilde{\theta}).
\end{equation*}
Consider $\epsilon = \min(\epsilon_1, \epsilon_2)$. Since, according to Proposition \ref{psi^theta is a homeomorphism}, $\rho_\theta \circ (\psi^\theta)^{-1}$ is continuous at $\phi(\theta) \in \psi^\theta(U_\theta)$, and since $\rho_\theta \circ ( \psi^\theta)^{-1} ( \phi ( \theta)) = \rho_\theta  (\thF) = \theta$, there exists $\epsilon' > 0$ such that for all $\tau \in U_\theta$, 
\begin{equation}\label{proof CNS local identifiability-1}
    \| \psi^\theta(\tau) - \phi(\theta) \|_{\infty} < \epsilon' \quad \Longrightarrow \quad \| \rho_\theta(\tau) - \theta \|_\infty = \| \rho_\theta \circ (\psi^\theta)^{-1}(\psi^\theta(\tau)) - \rho_\theta \circ ( \psi^\theta)^{-1} ( \phi ( \theta)) \|_\infty < \epsilon.
\end{equation}
Since $\phi(\theta) = \psi^\theta(\thF)$, Proposition \ref{psi^t(U_t) is a neighborhood of t} guarantees that, modulo a decrease of $\epsilon '$, we can assume that 
\begin{equation}\label{proof CNS local identifiability-2}
    B_\infty(\phi(\theta), \epsilon') \cap \Sigma_1^* \subset \psi^\theta(U_\theta) .
\end{equation}

Now let $\eta \in B_\infty(\phi(\theta), \epsilon') \cap \Sigma_1^* \cap N(X,\theta)$. Let us prove that $\eta = \phi(\theta)$. Using \eqref{proof CNS local identifiability-2}, there exists $\tau \in U_\theta$ such that $\eta = \psi^\theta (\tau)$. Since $\|\phi(\theta) - \eta \|_\infty < \epsilon'$, we have using \eqref{proof CNS local identifiability-1}
\begin{equation}\label{proof abstract CNS - 1}
\|\rho_\theta(\tau) - \theta \|_\infty < \epsilon.
\end{equation}
Since $\epsilon < \epsilon_2$, we have
\begin{equation}\label{A(X,theta) = A(X, rho(tau))}
A(X, \theta) = A(X, \rho_\theta(\tau)).
\end{equation}
Since $\psi^\theta(\tau) = \eta \in N(X, \theta)$, we have by definition of $N(X, \theta)$ that $\psi^\theta(\tau) - \phi(\theta) \in \Ker A(X, \theta)$, so
\begin{equation}\label{A(X, theta) rho_theta = A(X, theta) phi(theta)}
A(X, \theta) \cdot \psi^\theta(\tau) = A(X, \theta) \cdot \phi(\theta)
\end{equation}
Using successively \eqref{fundamental prop of alpha-matrix version-main}, \eqref{A(X,theta) = A(X, rho(tau))}, \eqref{A(X, theta) rho_theta = A(X, theta) phi(theta)} and \eqref{fundamental prop of alpha-matrix version-main} again, we have
\begin{equation*}
\begin{aligned}
f_{\rho_\theta(\tau)}(X) & = A(X, \rho_\theta(\tau)) \cdot \phi(\rho_\theta(\tau)) \\
& = A(X, \theta) \cdot \phi(\rho_\theta(\tau)) \\
& = A(X, \theta) \cdot \phi(\theta) \\
& = f_\theta(X).
\end{aligned}
\end{equation*}
Since the hypothesis $i)$ holds for $\epsilon_1$, using \eqref{proof abstract CNS - 1} and the fact that $\epsilon < \epsilon_1$, we have 
\[\theta \sim \rho_\theta(\tau).\]
We conclude using Proposition \ref{equivalence implies equal phi} that 
\[\eta = \phi( \rho_\theta(\tau)) = \phi(\theta),\]
which shows 
\[B_\infty(\phi(\theta), \epsilon') \cap \Sigma_1^* \cap N(X,\theta) \ \subset \ \{\phi(\theta)\}.\]
The converse inclusion trivially holds and therefore $ii)$ holds.

\item[$ii) \Rightarrow i)$] 
Suppose $ii)$ is satisfied for some $\epsilon' > 0$. 

We first construct $\epsilon$ and prove $i)$ holds. Since $\theta \in \ParSX$, using Proposition \ref{alpha_X is piecewise-constant}, there exists $\epsilon_1 > 0$ such that for all $\tilde{\theta} \in B_\infty(\theta, \epsilon_1)$, 
\[ \alpha(X, \theta) = \alpha(X, \tilde{\theta}),\]
i.e.
\begin{equation}\label{A(X,theta) = A(X, rho(tau)) 2}
A(X, \theta) = A(X, \tilde{\theta}).
\end{equation}
Since $\phi$ is continuous, there exists $\epsilon_2 > 0$ such that 
\[\|\theta - \tilde{\theta}\|_{\infty} < \epsilon_2 \quad \Longrightarrow \quad \| \phi(\theta) - \phi(\tilde{\theta})\|_\infty < \epsilon'.\]
Using Proposition \ref{rescaling equivalent and close implies equivalent}, there exists $\epsilon_3 > 0$ such that 
\[\theta \overset{R}{\sim} \tilde{\theta} \ \text{and} \ \|\theta - \tilde{\theta}\|_\infty < \epsilon_3 \quad \Longrightarrow \quad \theta \sim \tilde{\theta}.\]
Since $\theta \not\in S$ and $S$ is closed, there exists $\epsilon_4 > 0$ such that for all $\tilde{\theta} \in \Par$, if $\|\theta - \tilde{\theta}\|_\infty < \epsilon_4$, then 
\[\tilde{\theta} \not\in S.\]

Let $\epsilon = \min(\epsilon_1, \epsilon_2, \epsilon_3, \epsilon_4)$. Let $\tilde{\theta} \in B_\infty(\theta, \epsilon)$, and suppose 
\[f_\theta(X) = f_{\tilde{\theta}}(X).\]
Let us prove that $\theta \sim \tilde{\theta}$. Reformulating the above equality using \eqref{fundamental prop of alpha-matrix version-main} for both sides, and using the definition of $A$ given in the beginning of Section \ref{main results-sec-main}, we have
\begin{equation*}
\begin{aligned}
A(X, \theta) \cdot \phi(\theta)  = A(X, \tilde{\theta}) \cdot \phi(\tilde{\theta}).
\end{aligned}
\end{equation*}
Since $\| \theta - \tilde{\theta} \|_\infty < \epsilon \leq \epsilon_1$, we have the equality \eqref{A(X,theta) = A(X, rho(tau)) 2} and thus
\[A(X, \theta) \cdot \phi(\theta)  =  A(X, \theta) \cdot \phi(\tilde{\theta}).\]
In other words, $\phi(\tilde{\theta}) - \phi(\theta) \in \Ker A(X, \theta)$. 
Since $\epsilon < \epsilon_4$, $\phi(\tilde{\theta}) \in \Sigma_1^*$. Since $\epsilon < \epsilon_2$, $\phi(\tilde{\theta}) \in B_\infty(\phi(\theta), \epsilon')$. 
Summarizing, 
\[\phi(\tilde{\theta}) \in B_\infty(\phi(\theta), \epsilon') \cap \Sigma_1^* \cap N(X,\theta) ,\]
and using the hypothesis $ii)$, we conclude that 
\[\phi(\tilde{\theta}) = \phi(\theta).\]
By Proposition \ref{equal phi implies resc equivalence}, we have $\theta \overset{R}{\sim} \tilde{\theta}$, and since $\epsilon < \epsilon_3$, we conclude that
\[\theta \sim \tilde{\theta}.\]

\end{proof}

We are now going to prove Theorems \ref{Necessary condition-main} and \ref{Sufficient condition-main}, which we restate as Theorems \ref{Necessary condition} and \ref{Sufficient condition} respectively (using Definition \ref{def:local:identif}).

\begin{thm}[Necessary condition]\label{Necessary condition}
Let $X \in \RR^{n \times V_0}$ and $\theta \in \ParSX$. If $C_N$ is not satisfied, then $\theta$ is not locally identifiable from $X$ (thus not globally identifiable).
\end{thm}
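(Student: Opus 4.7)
The plan is to invoke the geometric characterization of Theorem~\ref{local identifiability using Sigma1*}: it suffices to exhibit, for every $\epsilon > 0$, a point of $B_\infty(\phi(\theta),\epsilon) \cap \Sigma_1^* \cap N(X,\theta)$ distinct from $\phi(\theta)$. Failure of $C_N$ means $R_\Gamma \geq R_A$ and $R_\Gamma < |F_\theta|+|B|$; combined with the inequality $R_\Gamma \leq R_A$ that comes directly from $\Gamma(X,\theta) = A(X,\theta) \circ D\psi^\theta(\tau_\theta)$, this forces $R_\Gamma = R_A$ together with a strictly positive dimension gap $|F_\theta|+|B| - R_\Gamma > 0$. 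Geometrically, $R_\Gamma = R_A$ says that the tangent space $T_{\phi(\theta)}\Sigma_1^* = \Im D\psi^\theta(\tau_\theta)$ together with $\Ker A(X,\theta)$ spans $\RR^{\mathcal{P}\times V_L}$, while the strict gap says that these two subspaces intersect nontrivially, making a positive-dimensional local intersection $\Sigma_1^* \cap N(X,\theta)$ plausible.

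I would then realize this intersection via an implicit-function/constant-rank argument. Define the smooth map
\begin{equation*}
F : U_\theta \longrightarrow \RR^{n \times V_L}, \qquad F(\tau) = A(X,\theta)\bigl(\psi^\theta(\tau)-\phi(\theta)\bigr).
\end{equation*}
Then $F(\tau_\theta) = 0$ and $DF(\tau_\theta) = \Gamma(X,\theta)$, so $\rk DF(\tau_\theta) = R_\Gamma = R_A$. For every $\tau \in U_\theta$, $\Im DF(\tau) \subseteq \Im A(X,\theta)$, giving the automatic upper bound $\rk DF(\tau) \leq R_A$; lower semicontinuity of the rank of a smooth matrix-valued map provides the matching lower bound $\rk DF(\tau) \geq R_A$ on a neighborhood $U'$ of $\tau_\theta$. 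Hence $F$ has constant rank $R_A$ on $U'$, and the constant-rank theorem yields a neighborhood $U''$ of $\tau_\theta$ on which $F^{-1}(\{0\}) \cap U''$ is a smooth submanifold of dimension $|F_\theta|+|B| - R_A > 0$.

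Pick a sequence $(\tau_n)$ in $F^{-1}(\{0\}) \cap U''$ with $\tau_n \neq \tau_\theta$ and $\tau_n \to \tau_\theta$, and set $\tilde\theta_n = \rho_\theta(\tau_n)$. Continuity of $\rho_\theta$ gives $\tilde\theta_n \to \theta$; since $\theta \notin \Delta_X$ and $\Delta_X$ is closed, eventually $A(X,\tilde\theta_n) = A(X,\theta)$ by Proposition~\ref{alpha_X is piecewise-constant}, so that
\begin{equation*}
f_{\tilde\theta_n}(X) = A(X,\tilde\theta_n)\phi(\tilde\theta_n) = A(X,\theta)\psi^\theta(\tau_n) = A(X,\theta)\phi(\theta) + F(\tau_n) = f_\theta(X).
\end{equation*}
Injectivity of $\psi^\theta$ (Proposition~\ref{psi^theta is injective}) gives $\phi(\tilde\theta_n) = \psi^\theta(\tau_n) \neq \phi(\theta)$, so by the contrapositive of Proposition~\ref{equivalence implies equal phi}, $\tilde\theta_n \not\sim \theta$. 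As this is true for arbitrarily small perturbations of $\theta$, local identifiability fails.

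The one delicate step is the constant-rank claim: without it, one only controls $DF$ at $\tau_\theta$ and cannot conclude a positive-dimensional solution set of $F = 0$. The sandwich between the automatic upper bound $R_A$ coming from $\Im A(X,\theta)$ and the semicontinuity lower bound $R_A$ closes that gap cleanly and powers the whole argument; the reduction to the geometric characterization, the rank algebra, and the verification that the constructed $\tilde\theta_n$ break local identifiability are then routine.
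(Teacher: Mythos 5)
Your proof is correct, but it takes a genuinely different route from the paper's. The paper argues through the geometric characterization of Theorem~\ref{local identifiability using Sigma1*}: writing $T = \Im D\psi^\theta(\tau_\theta)$ and $H = T \cap \Ker A(X,\theta)$, it chooses a complement $G$ of $H$ inside $\Ker A(X,\theta)$, uses $R_\Gamma = R_A$ to get $T \oplus G = \RR^{\mathcal{P}\times V_L}$ so that $\xi(\tau,g) = \psi^\theta(\tau) + g$ is a local diffeomorphism by the inverse function theorem (Lemma~\ref{lemma necessary condition}), and then, since $\dim H = |F_\theta|+|B|-R_\Gamma > 0$ (Lemma~\ref{lemma dim(H)}), decomposes $\phi(\theta)+w = \psi^\theta(\tau)+g$ for a small nonzero $w \in H$, which yields a point of $B_\infty(\phi(\theta),\epsilon)\cap\Sigma_1^*\cap N(X,\theta)$ distinct from $\phi(\theta)$ and concludes via Theorem~\ref{local identifiability using Sigma1*}. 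You instead work entirely in the restricted parameter space: you apply the constant-rank theorem to $F(\tau)=A(X,\theta)\bigl(\psi^\theta(\tau)-\phi(\theta)\bigr)$, where the hypothesis $R_\Gamma=R_A$ is exactly what closes your rank sandwich ($\rk DF(\tau)\le R_A$ everywhere because $\Im DF(\tau)\subset\Im A(X,\theta)$, and $\rk DF(\tau)\ge R_A$ near $\tau_\theta$ by lower semicontinuity), so that $F^{-1}(\{0\})$ is locally a submanifold of positive dimension $|F_\theta|+|B|-R_\Gamma$; you then contradict Definition~\ref{def:local:identif} directly, re-deriving on the way the ingredients the paper has packaged inside Theorem~\ref{local identifiability using Sigma1*} (local constancy of $A(X,\cdot)$ off $\Delta_X$ from Proposition~\ref{alpha_X is piecewise-constant}, injectivity of $\psi^\theta$ from Proposition~\ref{psi^theta is injective}, and Proposition~\ref{equivalence implies equal phi}). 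Both arguments pivot on the same observation that failure of $C_N$ forces $R_\Gamma=R_A<|F_\theta|+|B|$. What each buys: the paper's construction explicitly exhibits the nontrivial local intersection $\Sigma_1^*\cap N(X,\theta)$, matching the geometric picture of Figure~\ref{geometric condition} and reusing the already-proved equivalence, while avoiding the constant-rank theorem (only the inverse function theorem is needed); your version is more self-contained and dispenses with the auxiliary complement $G$ and the ambient-space diffeomorphism, at the cost of invoking the slightly heavier constant-rank theorem and of repeating reduction steps that the paper outsources to its geometric theorem.
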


\begin{thm}[Sufficient condition]\label{Sufficient condition}
Let $X \in \RR^{n \times V_0}$ and $\theta \in \ParSX$. If $C_S$ is satisfied, then $\theta$ is locally identifiable from $X$.
\end{thm}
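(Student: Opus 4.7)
My plan is to reduce to the geometric criterion of Theorem \ref{local identifiability using Sigma1*}: it suffices to exhibit some $\epsilon>0$ such that
\[
B_\infty(\phi(\theta),\epsilon)\cap \Sigma_1^*\cap N(X,\theta)=\{\phi(\theta)\}.
\]
The tool for this will be the chart $\psi^\theta$ around $\phi(\theta)$. Since $\thF\in U_\theta$ and $\psi^\theta(\thF)=\phi(\theta)$, Proposition \ref{psi^t(U_t) is a neighborhood of t} provides $\epsilon_0>0$ such that every $\eta\in \Sigma_1^*$ with $\|\eta-\phi(\theta)\|_\infty<\epsilon_0$ can be written uniquely as $\eta=\psi^\theta(\tau)$ for some $\tau\in U_\theta$ (uniqueness by Proposition \ref{psi^theta is injective}), and the inverse map is continuous.

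First I would translate the condition $C_S$ into the injectivity of $\Gamma(X,\theta)$. Indeed $\Gamma(X,\theta):\ParF\to\RR^{n\times V_L}$ has domain of dimension $|F_\theta|+|B|$, so $R_\Gamma=|F_\theta|+|B|$ is equivalent to $\Ker\Gamma(X,\theta)=\{0\}$. Geometrically, since $T_{\phi(\theta)}\Sigma_1^*=\Im D\psi^\theta(\thF)$ and $D\psi^\theta(\thF)$ is injective (Proposition \ref{the differential of psi^theta is injective}), the condition says that the tangent space to $\Sigma_1^*$ at $\phi(\theta)$ intersects $\Ker A(X,\theta)$ only at $0$. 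The whole point of the proof is to upgrade this first-order/linearized statement to the actual nonlinear intersection being locally trivial.

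I would argue by contradiction. Assume no such $\epsilon$ exists; then there is a sequence $(\eta_n)$ with $\eta_n\in \Sigma_1^*\cap N(X,\theta)$, $\eta_n\neq \phi(\theta)$, and $\eta_n\to\phi(\theta)$. For $n$ large enough we can write $\eta_n=\psi^\theta(\tau_n)$ with $\tau_n\in U_\theta$, and by continuity of $(\psi^\theta)^{-1}$ (Proposition \ref{psi^theta is a homeomorphism}) we have $\tau_n\to \thF$ with $\tau_n\neq \thF$. The inclusion $\eta_n\in N(X,\theta)=\phi(\theta)+\Ker A(X,\theta)$ means
\[
A(X,\theta)\bigl(\psi^\theta(\tau_n)-\psi^\theta(\thF)\bigr)=0.
\]
Using the first-order expansion $\psi^\theta(\tau_n)-\psi^\theta(\thF)=D\psi^\theta(\thF)\cdot(\tau_n-\thF)+r_n$, where $\|r_n\|_\infty=o(\|\tau_n-\thF\|_\infty)$, and applying $A(X,\theta)$, I get
\[
\Gamma(X,\theta)\cdot(\tau_n-\thF)=-A(X,\theta)\cdot r_n.
\]
Dividing by $\|\tau_n-\thF\|_\infty$ and setting $u_n=(\tau_n-\thF)/\|\tau_n-\thF\|_\infty$, the right-hand side tends to $0$ (because $A(X,\theta)$ is a bounded linear operator and $r_n/\|\tau_n-\thF\|_\infty\to 0$), while $\|u_n\|_\infty=1$.

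Finally, the closed unit sphere of $\ParF$ is compact, so I extract a subsequence $u_{n_k}\to u$ with $\|u\|_\infty=1$. Passing to the limit in $\Gamma(X,\theta)\cdot u_{n_k}\to 0$ yields $\Gamma(X,\theta)\cdot u=0$. Injectivity of $\Gamma(X,\theta)$ (from $C_S$) forces $u=0$, contradicting $\|u\|_\infty=1$. This concludes the argument; the main obstacle is the Taylor-expansion/compactness step that bridges the linearized condition $C_S$ and the nonlinear manifold intersection of Theorem \ref{local identifiability using Sigma1*}, and care must be taken in invoking Proposition \ref{psi^t(U_t) is a neighborhood of t} so that $(\psi^\theta)^{-1}$ is actually defined on the sequence $\eta_n$.
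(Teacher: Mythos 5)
Your proof is correct and follows essentially the same route as the paper: reduction to the geometric criterion of Theorem \ref{local identifiability using Sigma1*}, a contradiction argument with a sequence in $\Sigma_1^*\cap N(X,\theta)$ pulled back through the chart $\psi^\theta$ (Propositions \ref{psi^t(U_t) is a neighborhood of t} and \ref{psi^theta is a homeomorphism}), a first-order expansion of $\psi^\theta$ at $\thF$, and a compactness extraction yielding a nonzero kernel element. The only cosmetic difference is that you phrase the contradiction directly as injectivity of $\Gamma(X,\theta)$, whereas the paper states it as $T\cap\Ker A(X,\theta)=\{0\}$ via Lemma \ref{lemma dim(H)}; these are the same statement transported by the injective map $D\psi^\theta(\thF)$.
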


To prove the theorems, we need to prove first the following lemmas.

\begin{lem}\label{lemma dim(H)}
Let us denote by $T = \Im D\psi^\theta(\thF)$ the direction of the tangent plane to $\Sigma_1^*$ at $\phi(\theta)$. Let us denote by $H$ the intersection $\Ker A(X, \theta) \cap T$. We have
\begin{equation}\label{dim(H)}
\dim(H)  = |F_{\theta}| + |B| - R_\Gamma.
\end{equation}
\end{lem}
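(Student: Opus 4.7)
The plan is to exploit the decomposition $\Gamma(X,\theta) = A(X,\theta) \circ D\psi^\theta(\tau_\theta)$ together with the injectivity of $D\psi^\theta(\tau_\theta)$ (Proposition~\ref{the differential of psi^theta is injective}) to convert the dimension of $H$ into the dimension of the kernel of $\Gamma(X,\theta)$, and then conclude by the rank--nullity theorem.

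More precisely, first I would observe that since $D\psi^\theta(\tau_\theta) : \RR^{F_\theta} \times \RR^B \to \RR^{\mathcal{P} \times V_L}$ is injective by Proposition~\ref{the differential of psi^theta is injective}, and since $T = \Im D\psi^\theta(\tau_\theta)$ by definition, the differential $D\psi^\theta(\tau_\theta)$ is a linear isomorphism from $\RR^{F_\theta} \times \RR^B$ onto $T$. Next I would compute, using $\Gamma(X,\theta) = A(X,\theta) \circ D\psi^\theta(\tau_\theta)$,
\[
\Ker \Gamma(X,\theta) = \{ u \in \RR^{F_\theta} \times \RR^B : D\psi^\theta(\tau_\theta) \cdot u \in \Ker A(X,\theta) \} = \bigl(D\psi^\theta(\tau_\theta)\bigr)^{-1}(\Ker A(X,\theta) \cap T),
\]
where the last equality uses that $D\psi^\theta(\tau_\theta) \cdot u$ always lies in $T$. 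The right-hand side is exactly $\bigl(D\psi^\theta(\tau_\theta)\bigr)^{-1}(H)$, and because $D\psi^\theta(\tau_\theta)$ restricts to an isomorphism from $\RR^{F_\theta} \times \RR^B$ onto $T$, this gives $\dim \Ker \Gamma(X,\theta) = \dim H$.

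Finally, applying the rank--nullity theorem to the linear map $\Gamma(X,\theta) : \RR^{F_\theta} \times \RR^B \to \RR^{n \times V_L}$, I obtain
\[
\dim \Ker \Gamma(X,\theta) = \dim(\RR^{F_\theta} \times \RR^B) - \rk \Gamma(X,\theta) = |F_\theta| + |B| - R_\Gamma,
\]
which combined with the previous equality yields $\dim H = |F_\theta| + |B| - R_\Gamma$. There is no real obstacle here: the only nontrivial input is the injectivity of $D\psi^\theta(\tau_\theta)$, which has already been established, and the rest is a straightforward rank--nullity calculation.
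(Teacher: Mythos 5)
Your proposal is correct and follows essentially the same route as the paper: both identify $\Ker \Gamma(X,\theta)$ with the preimage of $H = \Ker A(X,\theta) \cap T$ under $D\psi^\theta(\thF)$, use the injectivity of $D\psi^\theta(\thF)$ (Proposition \ref{the differential of psi^theta is injective}) to equate the dimensions, and conclude by rank--nullity. No gap to report.
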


\begin{proof}

Let $\eta \in T$. There exists $h \in \ParF$ such that $\eta = D\psi^\theta(\thF) \cdot h$. We have the following equivalence:
\begin{equation*}
\begin{aligned}
\eta \in \Ker A(X, \theta) \quad & \Longleftrightarrow \quad  A(X, \theta) \cdot \eta \ = \ 0 \\
& \Longleftrightarrow \quad   A(X, \theta) \circ D\psi^\theta(\thF) \cdot h  \ = \ 0 \\
& \Longleftrightarrow \quad \Gamma (X, \theta) \cdot h  \ = \ 0 \\
& \Longleftrightarrow \quad h  \in \Ker \Gamma (X, \theta).
\end{aligned}
\end{equation*}

This shows that $D\psi^\theta(\thF)^{-1}(\Ker A(X, \theta) \cap T) = \Ker \Gamma (X, \theta) \subset \ParF$.

Since $D{\psi^{\theta}}(\thF)$ is injective, we thus have
\begin{equation*}
\dim(H)  = \dim(\Ker \Gamma (X, \theta)) = |F_{\theta}| + |B| - R_\Gamma.
\end{equation*}
\end{proof}

\begin{lem}\label{lemma necessary condition}
Let $G$ be a supplementary subspace of $\Ker A(X, \theta)$ such that
\begin{equation}\label{H+G=N}
H \oplus G = \Ker A(X, \theta).
\end{equation}
If $R_\Gamma = R_A$, there exist an open set $\mathcal{O} \subset U_\theta \times G $ containing $(\thF, 0)$ and an open set $\mathcal{V} \subset \RR^{\mathcal{P} \times V_L}$ containing $\phi(\theta)$ such that 
\[\fonction{\xi}{\mathcal{O}}{\mathcal{V}}{(\tau, g)}{\psi^\theta(\tau) + g}\]
is a diffeomorphism from $\mathcal{O}$ onto $\mathcal{V}$.
\end{lem}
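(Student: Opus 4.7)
The plan is to apply the inverse function theorem at the point $(\tau_\theta, 0)$. The map $\xi$ is $C^\infty$ since $\psi^\theta$ is $C^\infty$ (Proposition \ref{the differential of psi^theta is injective}) and addition is smooth, so everything reduces to checking that $D\xi(\tau_\theta, 0)$ is a linear isomorphism from $(\RR^{F_\theta} \times \RR^B) \times G$ onto $\RR^{\mathcal{P} \times V_L}$.

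First I would do the dimension count. By Lemma \ref{lemma dim(H)}, $\dim H = |F_\theta| + |B| - R_\Gamma$. Since $A(X,\theta)$ is a linear map from $\RR^{\mathcal{P} \times V_L}$ to $\RR^{n \times V_L}$, we have $\dim \Ker A(X,\theta) = |\mathcal{P}|\, N_L - R_A$. Because $H \oplus G = \Ker A(X,\theta)$, this gives $\dim G = |\mathcal{P}|\, N_L - R_A - |F_\theta| - |B| + R_\Gamma$. Under the hypothesis $R_\Gamma = R_A$, the domain of $\xi$ then has dimension $|F_\theta| + |B| + \dim G = |\mathcal{P}|\, N_L$, matching the dimension of the target space.

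Next I would check that $D\xi(\tau_\theta, 0)$ is injective. For $(h,g) \in (\RR^{F_\theta} \times \RR^B) \times G$ we have $D\xi(\tau_\theta,0) \cdot (h,g) = D\psi^\theta(\tau_\theta) \cdot h + g$. If this vanishes, then $D\psi^\theta(\tau_\theta) \cdot h = -g \in G \subset \Ker A(X,\theta)$, while at the same time $D\psi^\theta(\tau_\theta) \cdot h \in T$. Thus $D\psi^\theta(\tau_\theta) \cdot h \in T \cap \Ker A(X,\theta) = H$, so $-g \in H \cap G = \{0\}$ by the direct sum $H \oplus G$. This forces $g = 0$ and $D\psi^\theta(\tau_\theta) \cdot h = 0$, hence $h = 0$ by Proposition \ref{the differential of psi^theta is injective}. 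Combined with the dimension equality, $D\xi(\tau_\theta, 0)$ is bijective.

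Finally, the inverse function theorem yields open neighborhoods $\mathcal{O} \subset U_\theta \times G$ of $(\tau_\theta, 0)$ and $\mathcal{V} \subset \RR^{\mathcal{P} \times V_L}$ of $\xi(\tau_\theta, 0) = \psi^\theta(\tau_\theta) = \phi(\theta)$ such that $\xi$ restricts to a $C^\infty$ diffeomorphism from $\mathcal{O}$ onto $\mathcal{V}$, as required. The only subtle point is the dimension bookkeeping to see that $R_\Gamma = R_A$ is precisely what is needed to make the domain and codomain match in dimension; the injectivity argument itself is short once the direct sum decomposition is set up.
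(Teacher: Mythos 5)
Your proof is correct and follows essentially the same route as the paper: apply the inverse function theorem to $\xi$ at $(\tau_\theta,0)$, using Lemma \ref{lemma dim(H)} together with $R_\Gamma=R_A$ for the dimension count and the injectivity of $D\psi^\theta(\tau_\theta)$ (Proposition \ref{the differential of psi^theta is injective}) combined with the direct sum $H\oplus G$ for the injectivity of $D\xi(\tau_\theta,0)$. The only cosmetic difference is that the paper first establishes $T\oplus G=\RR^{\mathcal{P}\times V_L}$ and then concludes, whereas you argue directly through $T\cap\Ker A(X,\theta)=H$ and $H\cap G=\{0\}$; the mathematical content is the same.
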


\begin{proof}
Let us first show that
\begin{equation}\label{T + G = R^(P x V_l)}
T \oplus G = \RR^{\mathcal{P}\times V_L}.
\end{equation}
Indeed, since $\Ker A(X, \theta) = H \oplus G$ and $T \cap \Ker A(X, \theta) = H$, we have $T \cap G = \{0\}$. We of course have 
\begin{equation}\label{T + G subset R^(P x V_l)}
T \oplus G \subset \RR^{\mathcal{P} \times V_L}.
\end{equation}

Let us show that $\dim(G) = \dim(\RR^{\mathcal{P} \times V_L})-\dim(T)$. First note that we have
\begin{equation}\label{dim(N)}
\dim(\Ker A(X, \theta)) = \dim(\RR^{\mathcal{P} \times V_L}) - \rk(A(X, \theta)) = |\mathcal{P}|N_L - R_A.
\end{equation}
Using \eqref{H+G=N} and \eqref{dim(N)}, we have 
\begin{equation*}
\begin{aligned}
\dim(G) & = \dim(\Ker A(X, \theta)) - \dim(H) \\
& = |\mathcal{P}|N_L - R_A - \dim(H).
\end{aligned}
\end{equation*}

Using \eqref{dim(H)} and the hypothesis $R_\Gamma = R_A$ we thus have

\begin{equation*}
\begin{aligned}
\dim(G) & = |\mathcal{P}|N_L - R_A + R_\Gamma - |F_\theta| - |B| \\
& = |\mathcal{P}|N_L - |F_\theta| - |B| \\
& = |\mathcal{P}|N_L - \dim(T),
\end{aligned}
\end{equation*}
where the last equality comes from the injectivity of $D\psi^\theta (\thF)$, shown in Proposition \ref{the differential of psi^theta is injective}. Together with \eqref{T + G subset R^(P x V_l)}, this proves \eqref{T + G = R^(P x V_l)}.

Let us now consider the function
\[\fonction{\xi}{U_\theta \times G}{\RR^{\mathcal{P}\times V_L}}{(\tau, g)}{\psi^\theta(\tau) + g.}\]
For all $(h,g) \in (\ParF) \times G$, we have

\[D\xi (\thF,0)\cdot(h,g)=D\psi^\theta(\thF)h + g. \]

The differential $D\xi (\thF,0)$ is injective. Indeed, if 
\[D\xi (\thF,0)\cdot(h,g) = 0,\]
then since $D\psi^\theta(\thF)h \in T$ and $g \in G$,  we have
\[\begin{cases}D\psi^\theta(\thF)h = 0 \\ g=0,\end{cases}\]
and since $D\psi^\theta(\thF)$ is injective, $h=0$ and $D \xi (\thF, 0)$ is injective. Since, using \eqref{T + G = R^(P x V_l)},
\[\dim(\ParF) + \dim(G) = |\mathcal{P}|N_L,\]
 $D\xi(\thF, 0)$ is bijective. 

We can thus apply the inverse function theorem: there exists an open set $\mathcal{O} \subset U_\theta \times G $ containing $(\thF, 0)$, an open set $\mathcal{V} \subset \RR^{\mathcal{P} \times V_L}$ containing $\phi(\theta)$ such that $\xi$ is a diffeomorphism from $\mathcal{O}$ into $\mathcal{V}$. 
\end{proof}

We can now prove the theorems.

\begin{proof}[Proof of Theorem \ref{Necessary condition}]
If $C_N$ is not satisfied, then we have $R_\Gamma = R_A < |F_{\theta}| + |B|$. We can thus apply Lemma \ref{lemma necessary condition}: there exist an open set $\mathcal{O} \subset U_\theta \times G $ containing $(\thF, 0)$ and an open set $\mathcal{V} \subset \RR^{\mathcal{P} \times V_L}$ containing $\phi(\theta)$ such that 
\[\fonction{\xi}{\mathcal{O}}{\mathcal{V}}{(\tau, g)}{\psi^\theta(\tau) + g}\]
is a diffeomorphism from $\mathcal{O}$ onto $\mathcal{V}$.

Consider $\epsilon > 0$. We define the open set $\tilde{\mathcal{O}} = \mathcal{O} \cap (\psi^\theta)^{-1}(B(\phi(\theta), \epsilon) \times G$ and its image $\tilde{\mathcal{V}} = \xi(\tilde{\mathcal{O}})$. 

Using the computation of $\dim(H)$ shown in Lemma \ref{lemma dim(H)}, we have
\[\dim(H) = |F_{\theta}| + |B| - R_\Gamma > 0,\]
so there exists a nonzero $w \in H$ such that $\phi(\theta) + w \in \tilde{\mathcal{V}}$. Since $\xi$ induces a diffeomorphism from $\tilde{\mathcal{O}}$ onto $\tilde{\mathcal{V}}$, there exists $(\tau, g) \in \tilde{\mathcal{O}}$ such that 
\[\phi(\theta) + w = \psi^\theta(\tau) + g\]
i.e.
\begin{equation}\label{Nec cond-eq 1}
\psi^\theta(\tau) - \phi(\theta) = w - g .
\end{equation}

Let us denote $\tilde{\theta} = \rho_{\theta}(\tau)$ and let us show that Theorem \ref{local identifiability using Sigma1*}.ii) does not hold. By definition, $\phi(\tilde{\theta}) = \psi^\theta(\tau)$ and since $(\tau,g) \in \tilde{\mathcal{O}}$, $\| \phi(\theta) - \phi(\tilde{\theta}) \|_\infty < \epsilon$. Since $H \cap G = \{0\}$,  $w \in H$, $g \in G$ and $w \neq 0$, \eqref{Nec cond-eq 1} shows that
\[\phi(\tilde{\theta}) - \phi(\theta)  \neq 0.\]

Furthermore, since $w \in H \subset \Ker A(X, \theta)$ and $g \in G \subset \Ker A(X, \theta)$, \eqref{Nec cond-eq 1} shows that
\[\phi(\tilde{\theta}) - \phi(\theta) \in \Ker A(X, \theta),\]
so
\begin{equation*}
\begin{aligned}
\phi(\tilde{\theta}) & \in  N(X, \theta).
\end{aligned}
\end{equation*}

Summarizing, for any $\epsilon > 0$ there exists $\tilde{\theta} \in \ParS$ such that $\phi(\tilde{\theta})\in B_\infty(\phi(\theta), \epsilon) \cap  \Sigma_1^* \cap N(X, \theta) \backslash \{ \phi(\theta) \}$. The second item of Theorem \ref{local identifiability using Sigma1*} does not hold. Since it is equivalent, the first item of Theorem \ref{local identifiability using Sigma1*} does not hold either. In other words, the conclusion of Theorem \ref{Necessary condition} holds.
\end{proof}

\begin{proof}[Proof of Theorem \ref{Sufficient condition}]
Suppose that $C_S$ is satisfied. Using Lemma \ref{lemma dim(H)} and using $C_S$, we obtain
\begin{equation*}
\begin{aligned}
\dim(T \cap \Ker A(X, \theta)) = |F_\theta| + |B| - R_\Gamma = 0.
\end{aligned}
\end{equation*}
We thus have
\begin{equation}\label{T cap N = 0}
T \cap \Ker A(X,\theta) = \{0 \}.
\end{equation}

In order to apply Theorem \ref{local identifiability using Sigma1*}, let us show by contradiction that there exists $\epsilon > 0$ such that 
\begin{equation}\label{Sigma1* cap N = phi(theta)}
B_\infty(\phi(\theta), \epsilon) \cap \Sigma_1^* \cap N(X, \theta) = \{ \phi(\theta)\}.
\end{equation}
More precisely, we suppose that for all $n \in \NN^*$, there exists $\phi_n \in N(X, \theta) \cap \Sigma_1^*$ such that $\phi_n \neq \phi(\theta)$ and $\|\phi(\theta) - \phi_n\|_{\infty} < \frac{1}{n}$ and prove that it leads to $T \cap \Ker A(X, \theta) \neq \{0\}$, which contradicts \eqref{T cap N = 0}.

Using Proposition \ref{psi^t(U_t) is a neighborhood of t}, there exists $n_0 \in \NN^*$ such that for all $n \geq n_0$, there exists $\tau_n \in U_\theta$ such that $\phi_n = \psi^\theta(\tau_n)$. Since $\psi^\theta$ is a homeomorphism and $\psi^\theta(\thF)= \phi(\theta)$, 
\[\phi_n \rightarrow \phi(\theta)\]
implies that
\[ \tau_n \rightarrow \thF.\]
Moreover, for all $n \geq n_0$, $\tau_n \neq \thF$.

When $n$ tends to infinity, we can thus write
\[\phi_n - \phi(\theta)= \psi^\theta(\tau_n) - \psi^\theta(\thF) = D\psi^\theta(\thF) \cdot (\tau_n - \thF) + o(\tau_n - \thF).\]
Let us apply $A(X, \theta)$ and divide by $\| \tau_n - \thF\|$.
\begin{equation}\label{0 = AJ + o(1)}
\frac{1}{\| \tau_n - \thF\|}A(X, \theta) \cdot( \phi_n - \phi(\theta) )= A(X, \theta) \circ D\psi^\theta(\thF) \cdot \left(\frac{\tau_n - \thF}{\| \tau_n - \thF\|}\right) + \frac{1}{\| \tau_n - \thF\|}A(X, \theta)o\left(\tau_n - \thF\right). 
\end{equation}
Since $\phi_n \in N(X, \theta)$ for all $n \in \NN^*$,
\[\frac{1}{\| \tau_n - \thF\|}A(X, \theta) \cdot( \phi_n - \phi(\theta) )= 0.\]

Since $\frac{\tau_n - \thF}{\| \tau_n - \thF\|}$ belongs to the unit sphere, we can extract a subsequence that converges to a limit $h$ with norm equal to $1$.
Taking the limit in \eqref{0 = AJ + o(1)} according to this subsequence, we obtain
\[0 = A(X, \theta) \circ D\psi^\theta(\thF) \cdot h,\]
which shows that $D\psi^\theta (\thF) \cdot h  \in \Ker A(X, \theta)$. Since $h \neq 0$ and $D\psi^\theta (\thF)$ is injective, $D\psi^\theta (\thF) h \neq 0$ and
\[T \cap \Ker A(X, \theta) \neq \{ 0 \}.\]
This is in contradiction with \eqref{T cap N = 0}.

We have proven \eqref{Sigma1* cap N = phi(theta)}. We can now conclude thanks to Lemma \ref{local identifiability using Sigma1*}: there exists $\epsilon' > 0$ such that for any $\tilde{\theta} \in \Par$, if $\| \theta - \tilde{\theta}\| < \epsilon'$, then 
\[ f_\theta(X) = f_{\tilde{\theta}}(X) \quad \Longrightarrow \quad \theta \sim \tilde{\theta}.\]
\end{proof}

\section{Checking the conditions numerically}\label{Checking the conditions numerically-sec}

We restate and prove Proposition \ref{rank of A-main}.
\begin{prop}\label{rank of A}
Let $X \in \RR^{n \times V_0}$ and $\theta \in \Par$. We have
\[R_A = N_L \rk \left(\alpha(X,\theta)\right).\]
\end{prop}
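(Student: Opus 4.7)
The plan is to exploit the fact that the operator $A(X,\theta)$ acts column-wise on matrices in $\RR^{\mathcal{P}\times V_L}$, so that it decomposes as $N_L$ independent copies of multiplication by $\alpha(X,\theta)$. First I would identify $\RR^{\mathcal{P}\times V_L}$ with $\bigoplus_{v_L \in V_L} \RR^{\mathcal{P}}$ by sending each $\eta$ to its family of columns $(\eta_{:,v_L})_{v_L \in V_L}$, and similarly identify $\RR^{n \times V_L}$ with $\bigoplus_{v_L \in V_L} \RR^n$. Under these identifications, the definition of $A(X,\theta)$ as the matrix product with $\alpha(X,\theta)$ means that for each $v_L \in V_L$,
\[
\bigl(A(X,\theta)\eta\bigr)_{:,v_L} = \alpha(X,\theta)\,\eta_{:,v_L}.
\]
Hence $A(X,\theta)$ is the direct sum of $N_L$ copies of the linear map $\RR^{\mathcal{P}} \to \RR^n$, $u \mapsto \alpha(X,\theta) u$.

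Next I would conclude from this block-diagonal structure that the image of $A(X,\theta)$ is precisely the product $\prod_{v_L \in V_L} \Im(\alpha(X,\theta))$, and therefore
\[
\rk\bigl(A(X,\theta)\bigr) \;=\; \dim \Im A(X,\theta) \;=\; \sum_{v_L \in V_L} \rk\bigl(\alpha(X,\theta)\bigr) \;=\; N_L \,\rk\bigl(\alpha(X,\theta)\bigr),
\]
which is the desired identity. There is no real obstacle here: the only thing to be a little careful about is being explicit about the isomorphism $\RR^{\mathcal{P}\times V_L} \simeq (\RR^{\mathcal{P}})^{N_L}$ and observing that $A(X,\theta)$ respects this direct-sum decomposition, so that rank adds across summands. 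Everything else is a direct consequence of the definition of $A(X,\theta)$ given just before Proposition~\ref{rank of A-main}.
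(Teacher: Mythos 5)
Your proposal is correct and follows essentially the same route as the paper: the paper's proof also identifies $\eta$ with its $N_L$ columns, observes that $A(X,\theta)$ acts as $\alpha(X,\theta)$ on each column independently, and concludes that the rank of this direct sum of $N_L$ copies of $u \mapsto \alpha(X,\theta)u$ is $N_L \rk(\alpha(X,\theta))$. Nothing is missing.
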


\begin{proof}
Let $\eta \in \RR^{\mathcal{P} \times V_L}$. We have 
\[A(X, \theta) \cdot \eta = \alpha(X, \theta) \eta.\]
If we denote by $\eta^1, \dots, \eta^{N_L} \in \RR^{\mathcal{P}}$ the $N_L$ columns of $\eta$, the columns of $A(X, \theta) \cdot \eta$ are $\alpha(X, \theta) \eta^1, \dots, \alpha(X, \theta) \eta^{N_L}$. If we consider the matrix $\eta$ as a family of $N_L$ vectors of $\RR^{\mathcal{P}}$ and the matrix $A(X, \theta) \cdot \eta$ as a family of $N_L$ vectors of $\RR^{n}$, the operator $A(X, \theta)$ can then be equivalently described as
\[\fonction{A(X, \theta)}{(\RR^{\mathcal{P}})^{N_L}}{(\RR^{n})^{N_L}}{(\eta^1, \dots, \eta^{N_L})}{(\alpha(X, \theta) \eta^1, \dots, \alpha(X, \theta) \eta^{N_L}).} \]
The rank of such an operator is $N_L \rk(\alpha(X, \theta))$.
\end{proof}

We restate and prove Proposition \ref{Gamma is the differential-main}.
\begin{prop}\label{Gamma is the differential}
Let $X \in \RR^{n \times V_0}$ and $\theta \in \ParSX$. The function 
\[\varfonction{U_\theta}{\RR^{n \times V_L}}{\tau}{f_{\rho_\theta(\tau)}(X)}\]
is differentiable in a neighborhood of $\thF$, and we denote by $D_\tau f_{\rho_\theta(\tau_\theta)}(X)$ its differential at $\thF$. We have 
\begin{equation}\label{Gamma = D_tau f}
D_\tau f_{\rho_\theta(\tau_\theta)}(X) = \Gamma(X, \theta).
\end{equation}
\end{prop}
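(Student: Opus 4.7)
The plan is to exploit the piecewise-constant nature of $\alpha(X,\cdot)$ established in Proposition \ref{alpha_X is piecewise-constant} to reduce $f_{\rho_\theta(\tau)}(X)$, locally around $\tau_\theta$, to a linear function of $\psi^\theta(\tau)$, and then invoke the chain rule.

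First I would use the hypothesis $\theta \in \ParSX$, which means $\theta \notin \Delta_X$. Since Proposition \ref{alpha_X is piecewise-constant} states that $\Delta_X$ is closed, there exists an open neighborhood $W \subset \Par$ of $\theta$ on which $\alpha(X,\cdot)$ is constant and equal to $\alpha(X,\theta)$. The map $\rho_\theta$ is affine, hence continuous, and satisfies $\rho_\theta(\tau_\theta) = \theta \in W$, so $\rho_\theta^{-1}(W)$ is an open neighborhood of $\tau_\theta$ in $\ParF$, and it is contained in $U_\theta$ because $W \subset \ParS$ (after possibly shrinking $W$, using that $S$ is closed and $\theta \notin S$).

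Next, on this neighborhood of $\tau_\theta$, I would combine the matrix version of Proposition \ref{fundamental prop of alpha} with the fact that $\alpha(X,\rho_\theta(\tau)) = \alpha(X,\theta)$ to write
\begin{equation*}
f_{\rho_\theta(\tau)}(X) \;=\; \alpha(X,\rho_\theta(\tau))\,\phi(\rho_\theta(\tau)) \;=\; \alpha(X,\theta)\,\psi^\theta(\tau) \;=\; A(X,\theta)\cdot \psi^\theta(\tau).
\end{equation*}
Thus, locally around $\tau_\theta$, the map $\tau \mapsto f_{\rho_\theta(\tau)}(X)$ coincides with the composition $A(X,\theta) \circ \psi^\theta$.

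Finally, since $\psi^\theta$ is $C^\infty$ on $U_\theta$ (Proposition \ref{the differential of psi^theta is injective}) and $A(X,\theta)$ is linear, the chain rule immediately gives differentiability in a neighborhood of $\tau_\theta$ together with
\begin{equation*}
D_\tau f_{\rho_\theta(\tau_\theta)}(X) \;=\; A(X,\theta) \circ D\psi^\theta(\tau_\theta) \;=\; \Gamma(X,\theta),
\end{equation*}
which is exactly \eqref{Gamma = D_tau f}. There is no real obstacle here; the only point requiring care is to justify that $\alpha(X,\rho_\theta(\tau))$ is constant near $\tau_\theta$, which is precisely where the assumption $\theta \notin \Delta_X$ (encoded in $\theta \in \ParSX$) enters.
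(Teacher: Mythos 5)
Your proof is correct and follows essentially the same route as the paper: locally write $f_{\rho_\theta(\tau)}(X) = A(X,\theta)\cdot\psi^\theta(\tau)$ via Proposition \ref{fundamental prop of alpha} and the definition of $\psi^\theta$, then differentiate the composition of the linear map $A(X,\theta)$ with the $C^\infty$ map $\psi^\theta$ to get $\Gamma(X,\theta)$ from \eqref{definition of Gamma}. Your explicit justification that $\alpha(X,\rho_\theta(\tau))=\alpha(X,\theta)$ near $\tau_\theta$ (using that $\Delta_X$ and $S$ are closed and $\theta\in\ParSX$) is a point the paper's proof leaves implicit, so it is a welcome addition rather than a divergence.
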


\begin{proof}
Using \eqref{fundamental prop of alpha-matrix version-main} at $\rho_\theta(\tau)$ and the definition of $\psi^\theta$ in \eqref{definition psi^theta-main}, we have
\[f_{\rho_\theta(\tau)}(X) = A(X, \theta) \cdot \psi^\theta(\tau).\]
Taking the differential of 
\[\varfonction{U_\theta}{\RR^{n \times V_L}}{\tau}{f_{\rho_\theta(\tau)}(X)}\]
at $\thF$, and using \eqref{definition of Gamma}, we obtain
\[
D_\tau f_{\rho_\theta(\tau_\theta)}(X)
=
A(X, \theta) \circ D \psi^\theta(\thF) = \Gamma(X, \theta).
\]
 
\end{proof}

To finish with, the following proposition gives explicit expressions of the coefficients of $\Gamma(X, \theta)$. These expressions are given for the sake of theoretical completeness. Note that when it comes to computing $\Gamma(X, \theta)$ in practice (in order to compute $R_\Gamma$), the correct approach is using backpropagation as described in Section \ref{Computations-sec-main} rather than evaluating the expressions in Proposition \ref{explicit expression of Gamma} which involve sums with very large numbers of summands.
\begin{prop}\label{explicit expression of Gamma}
If we decompose it in the canonical bases of $\ParF$ and $\RR^{\llbracket 1, n \rrbracket \times V_L}$, $\Gamma(X, \theta)$ is a $(n N_L) \times (|F_\theta| + |B|) $ matrix. For lighter notations, let us drop the dependency in $(X, \theta)$ and denote by $\gamma^{i, v_L}$ the lines of $\Gamma(X, \theta)$, for $i \in \lb 1, n \rb$ and $v_L \in V_L$, which satisfy $(\gamma^{i, v_L})^T \in \ParF$. For any $(i, v_L) \in \lb 1, n \rb \times V_L $, let us express the coefficients of $\gamma^{i, v_L}$, i.e. express $\gamma^{i,v_L}_{v_l \rightarrow v_{l+1}}$ for any $v_l \rightarrow v_{l+1} \in F_\theta$ and express $\gamma^{i, v_L}_{v_l}$ for any $v_l \in B$.
\begin{itemize}
    \item For any $l \in \lb 0,L-1 \rb$ and any $(v_l, v_{l+1}) \in V_l \times V_{l+1}$ such that $v_l \rightarrow v_{l+1} \in F_\theta $, 
\begin{multline}\label{expression of gamma_(v->v')}
\gamma^{i,v_L}_{v_l \rightarrow v_{l+1}} = \sum_{\substack{v_{0} \in V_{0} \\ \vdots \\ v_{l-1} \in V_{l-1}\\ v_{l+2} \in V_{l+2}\\  \vdots \\ v_{L-1} \in V_{L-1}}} x^i_{v_0}  \overline{w}_{v_0 \rightarrow v_1} \overline{a}_{v_l}(x^i,\theta) \prod_{\substack{1 \leq k \leq L-1 \\ k \neq l}} a_{v_k}(x^i, \theta) w_{v_k \rightarrow v_{k+1}}  \\ +  \sum_{l'=1}^L \sum_{\substack{v_{l'} \in V_{l'} \\ \vdots \\ v_{l-1} \in V_{l-1}\\ v_{l+2} \in V_{l+2}\\  \vdots \\ v_{L-1} \in V_{L-1}}} b_{v_{l'}} \overline{a}_{v_l}(x^i,\theta) \prod_{\substack{l' \leq k \leq L-1 \\ k \neq l}} a_{v_k}(x^i, \theta) w_{v_k \rightarrow v_{k+1}} ,\end{multline}
where $\overline{w}_{v_0 \rightarrow v_1} = w_{v_0 \rightarrow v_1}$ and $\overline{a}_{v_l}(x^i,\theta) = {a}_{v_l}(x^i,\theta)$ except when $l=0$ in which case $\overline{w}_{v_0 \rightarrow v_1} = 1$ and $\overline{a}_{v_l}(x^i,\theta) =1$.
\item For any $l \in \lb 1,L \rb$ and any $v_l \in V_l$,
\begin{equation}\label{expression of gamma_v}
\gamma^{i, v_L}_{v_l} = \sum_{\substack{v_{l+1} \in V_{l+1} \\ \vdots \\ v_{L-1} \in V_{L-1}}}  \prod_{l \leq k \leq L-1} a_{v_k}(x^i, \theta) w_{v_k \rightarrow v_{k+1}}.
\end{equation}
\end{itemize}

\end{prop}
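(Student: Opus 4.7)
The plan is to combine Proposition \ref{Gamma is the differential} --- which identifies $\Gamma(X, \theta)$ with $D_\tau f_{\rho_\theta(\tau_\theta)}(X)$ --- with the path-sum expansion \eqref{expression of f} for $f_\theta(x^i)_{v_L}$ obtained in the proof of Proposition \ref{fundamental prop of alpha}, and then to read off the partial derivatives coordinate by coordinate.

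First, I will use the hypothesis $\theta \in \ParSX$. Since $\Delta_X$ is closed by Proposition \ref{alpha_X is piecewise-constant} and $\theta \notin \Delta_X$, the map $\tilde\theta \mapsto \alpha(x^i, \tilde\theta)$ is constant on a neighborhood of $\theta$, for every $i \in \lb 1, n \rb$. Pulling back through the continuous affine map $\rho_\theta$, there is a neighborhood of $\thF$ on which $a_v(x^i, \rho_\theta(\tau)) = a_v(x^i, \theta)$ for every hidden neuron $v$. Consequently, on this neighborhood, the expansion \eqref{expression of f} applied at $\rho_\theta(\tau)$ exhibits $f_{\rho_\theta(\tau)}(x^i)_{v_L}$ as a polynomial in the scalar entries of $\tau$, whose only $\tau$-dependent factors are the free weight components $\tau_{v \rightarrow v'}$ with $v \rightarrow v' \in F_\theta$ and the bias components $\tau_v$ with $v \in B$; the weights $w_{v \rightarrow s^\theta_{\max}(v)}$ and the activation indicators are frozen constants.

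The second step is a direct partial-derivative computation. For the coordinate $\tau_{v_l \rightarrow v_{l+1}}$ with $v_l \rightarrow v_{l+1} \in F_\theta$, the surviving summands in \eqref{expression of f} are precisely those whose path traverses the edge $(v_l, v_{l+1})$: this pins the indices at positions $l$ and $l+1$ to $v_l$ and $v_{l+1}$ and removes the factor $w_{v_l \rightarrow v_{l+1}}$ from the remaining product, yielding \eqref{expression of gamma_(v->v')} after renaming dummy indices (the first sum coming from the $\mathcal{P}_0$-part of \eqref{expression of f}, the second from the $\mathcal{P}_{l'}$-parts with $l' \leq l$). For the coordinate $\tau_{v_l}$ with $v_l \in B$, only the summand of \eqref{expression of f} carrying the bias $b_{v_l}$ depends on this coordinate, and its derivative is exactly the path sum starting at $v_l$ that appears in \eqref{expression of gamma_v}.

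The argument is essentially bookkeeping, so no step is a genuine obstacle; the only points requiring a little care are the boundary cases. When $l = 0$, the input layer carries no activation indicator and the weight being differentiated out is $w_{v_0 \rightarrow v_1}$ itself, which is precisely what the conventions $\overline{w}_{v_0 \rightarrow v_1} = 1$ and $\overline{a}_{v_0}(x^i, \theta) = 1$ are designed to absorb; and when $l = L$ in \eqref{expression of gamma_v}, the index range and the product are both empty, giving the value $1$ consistent with $\partial b_{v_L}/\partial b_{v_L} = 1$.
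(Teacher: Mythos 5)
Your proposal is correct in substance but takes a genuinely different route from the paper's. The paper never goes through Proposition \ref{Gamma is the differential}: it computes the entries of $\Gamma(X,\theta)=A(X,\theta)\circ D\psi^\theta(\thF)$ purely algebraically, pairing with canonical basis vectors to get $\gamma^{i,v_L}_{v_l\rightarrow v_{l+1}}=\sum_{p\in\mathcal{P}}\alpha_p(x^i,\theta)\,\tfrac{\partial \psi^\theta_{p,v_L}}{\partial \tau_{v_l\rightarrow v_{l+1}}}(\thF)$, and then substitutes the partial derivatives of $\psi^\theta$ already computed in the proof of Proposition \ref{the differential of psi^theta is injective}, sorting the paths by type. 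You instead identify $\Gamma(X,\theta)$ with the differential of $\tau\mapsto f_{\rho_\theta(\tau)}(X)$ and differentiate the path expansion \eqref{expression of f} with frozen activations; this buys the backpropagation intuition of Section \ref{Computations-sec-main} and shorter bookkeeping, but at a price of generality: your argument needs $\theta\in\ParSX$ (it is the hypothesis of Proposition \ref{Gamma is the differential}, and without it $\tau\mapsto f_{\rho_\theta(\tau)}(X)$ need not be differentiable at $\thF$), whereas the statement imposes no such restriction and the paper's computation proves the formulas for every $\theta\in\ParS$, including points of $\Delta_X$. Two smaller cautions on your freezing step: what $\theta\notin\Delta_X$ gives you directly (via Proposition \ref{alpha_X is piecewise-constant} and continuity of $\rho_\theta$) is local constancy of $\alpha(X,\cdot)$, not of each individual indicator $a_v(x^i,\cdot)$ --- a product of indicators can remain constant while one factor flips where another vanishes --- so the clean way to freeze is at the level of the $\alpha_p$'s, writing $f_{\rho_\theta(\tau)}(x^i)^T=\alpha(x^i,\theta)\,\psi^\theta(\tau)=\sum_{p}\alpha_p(x^i,\theta)\,\psi^\theta_{p,\cdot}(\tau)$ from \eqref{fundamental prop of alpha-matrix version-main}, after which differentiating term by term lands you exactly on the paper's computation (this identity is also how the paper proves Proposition \ref{Gamma is the differential} itself). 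With that adjustment, and keeping in mind the added hypothesis, your derivation of \eqref{expression of gamma_(v->v')} and \eqref{expression of gamma_v}, including the boundary conventions at $l=0$ and $l=L$, matches the stated result.
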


\begin{proof}

Let $(i, v_L) \in \lb 1, n \rb \times V_L $. 

Let us compute $\gamma^{i, v_L}_{v_l \rightarrow v_{l+1}}$, for $l \in \lb 0,L-1 \rb$ and $(v_l, v_{l+1}) \in V_l \times V_{l+1}$ such that $v_l \rightarrow v_{l+1} \in F_\theta $. $\gamma^{i,v_L}_{v_l \rightarrow v_{l+1}}$ is the coefficient corresponding to the line $(i,v_L)$ and the column $(v_l \rightarrow v_{l+1})$ of $\Gamma(X, \theta)$. Let us denote by $h^{v_l \rightarrow v_{l+1}} \in \ParF$ the vector whose component indexed by $v_l \rightarrow v_{l+1}$ is equal to $1$ and whose other components are zero. Let us denote by $d^{i,v_L} \in \RR^{n \times V_L}$ the element whose component indexed by $(i,v_L)$ is equal to $1$ and whose other components are zero. Let us denote by $\langle \cdot , \cdot \rangle_{\RR^{n \times V_L}}$ the scalar product of the euclidean space $\RR^{n \times V_L}$. We have
\begin{align*}
\gamma^{i,v_L}_{v_l \rightarrow v_{l+1}} & = \left\langle d^{i,v_L}  \ , \ \Gamma(X, \theta) \cdot h^{v_l \rightarrow v_{l+1}} \right\rangle_{\RR^{n \times V_L}}\\
& = \left\langle d^{i,v_L} \ , \ A(X, \theta) \circ D\psi^\theta(\thF) \cdot h^{v_l \rightarrow v_{l+1}} \right\rangle_{\RR^{n \times V_L}} \\
& = \left\langle d^{i,v_L} \ , \  A(X, \theta) \cdot \frac{\partial \psi^\theta}{\partial \tau_{v_l \rightarrow v_{l+1}}}(\thF)  \right\rangle_{\RR^{n \times V_L}} \\
& = \left\langle d^{i,v_L} \ , \  \alpha(X, \theta)  \frac{\partial \psi^\theta}{\partial \tau_{v_l \rightarrow v_{l+1}}}(\thF)  \right\rangle_{\RR^{n \times V_L}} \\
& =  \left[\alpha(X,\theta)  \frac{\partial \psi^\theta}{\partial \tau_{v_l \rightarrow v_{l+1}}}(\thF) \right]_{i,v_L},
\end{align*}
where $ \left[\alpha(X,\theta)  \frac{\partial \psi^\theta}{\partial \tau_{v_l \rightarrow v_{l+1}}}(\thF) \right]_{i,v_L}$ denotes the coefficient $(i,v_L)$ of the product $\alpha(X,\theta)  \frac{\partial \psi^\theta}{\partial \tau_{v_l \rightarrow v_{l+1}}}(\thF)$.
Let us remind the dimensions in this product. For the left factor, recalling the definition given in the beginning of Section \ref{Conditions of local identifiability-sec}, we have $\alpha(X, \theta) \in \RR^{n \times \mathcal{P}}$. Concerning the right factor, since for any $\tau \in U_\theta$, we have $\psi^\theta(\tau) \in \RR^{\mathcal{P} \times V_L}$, the partial derivative satisfies $\frac{\partial \psi^\theta}{\partial \tau_{v_l \rightarrow v_{l+1}}}(\thF) \in \RR^{\mathcal{P} \times V_L}$. Hence, the dimension of the product is
\[\alpha(X,\theta)  \frac{\partial \psi^\theta}{\partial \tau_{v_l \rightarrow v_{l+1}}}(\thF) \in \RR^{n \times V_L}.\]
To obtain the coefficient $(i,v_L)$ of this product, we keep the $i^{th}$ line of the left factor, which is equal to $\alpha(x^i, \theta)$, and the column $v_L$ of the right factor, which is equal to $ \frac{\partial \psi^\theta_{v_L}}{\partial \tau_{v_l \rightarrow v_{l+1}}}(\thF)$. We thus have
\[\left[\alpha(X,\theta)  \frac{\partial \psi^\theta}{\partial \tau_{v_l \rightarrow v_{l+1}}}(\thF) \right]_{i,v_L} = \alpha(x^i, \theta) \frac{\partial \psi^\theta_{v_L}}{\partial \tau_{v_l \rightarrow v_{l+1}}}(\thF) \ = \ \sum_{p \in \mathcal{P}} \alpha_p(x^i, \theta) \frac{\partial \psi^\theta_{p,v_L}}{\partial \tau_{v_l \rightarrow v_{l+1}}}(\thF). \]

Let $p \in \mathcal{P}$. If $p = (v_0, \dots, v_L) \in \mathcal{P}_0$, looking at the case 1 in the proof of Proposition \ref{the differential of psi^theta is injective}, we have
\begin{align*}
    \frac{\partial \psi^\theta_{p,v_L}}{\partial \tau_{v_l \rightarrow v_{l+1}}}(\thF) = \One_{\{v_l \rightarrow v_{l+1} \in p\}} \ \prod_{\substack{k \in \lb 0 , L-1 \rb \\ k \neq l }} w_{v_k \rightarrow v_{k+1}}.
\end{align*}

Recalling the definition of $\alpha_p(x^i, \theta)$ in the case $p \in \mathcal{P}_0$, given in \eqref{definition of alpha}, we also have
\[\alpha_p(x^i,\theta) = x^i_{v_0} \prod_{k=1}^{L-1}a_{v_k}(x^i, \theta) ,\]
and thus
\begin{align}\label{proof expression of Gamma case 1}
   \alpha_p(x^i, \theta) \frac{\partial \psi^\theta_{p,v_L}}{\partial \tau_{v_l \rightarrow v_{l+1}}}(\thF) = \One_{\{v_l \rightarrow v_{l+1} \in p\}} \  x^i_{v_0} \prod_{k=1}^{L-1}a_{v_k}(x^i, \theta)  \prod_{\substack{k \in \lb 0 , L-1 \rb \\ k \neq l }} w_{v_k \rightarrow v_{k+1}}.
\end{align}
Now if $p = (v_{l'}, \dots, v_L) \in \mathcal{P}_{l'}$, for $l' \in \lb 1, \dots, L-1 \}$, looking at the case 2 in the proof of Proposition \ref{the differential of psi^theta is injective}, we have
\begin{align*}
    \frac{\partial \psi^\theta_{p,v_L}}{\partial \tau_{v_l \rightarrow v_{l+1}}}(\thF) = \One_{\{v_l \rightarrow v_{l+1} \in p\}} \ b_{v_{l'}} \prod_{\substack{k \in \lb l' , L-1 \rb \\ k \neq l }} w_{v_k \rightarrow v_{k+1}}.
\end{align*}

Recalling the definition of $\alpha_p(x^i, \theta)$ in the case $p \in \mathcal{P}_{l'}$, given in \eqref{definition of alpha}, we also have
\[\alpha_p(x^i,\theta) = \prod_{k=l'}^{L-1}a_{v_k}(x^i, \theta) ,\]
and thus
\begin{align}\label{proof expression of Gamma case 2}
   \alpha_p(x^i, \theta) \frac{\partial \psi^\theta_{p,v_L}}{\partial \tau_{v_l \rightarrow v_{l+1}}}(\thF) = \One_{\{v_l \rightarrow v_{l+1} \in p\}} \ b_{v_{l'}} \prod_{k=l'}^{L-1}a_{v_k}(x^i, \theta) \prod_{\substack{k \in \lb l' , L-1 \rb \\ k \neq l }} w_{v_k \rightarrow v_{k+1}}.
\end{align}

Finally, if $p=\beta$, looking at the case 3 in the proof of Proposition \ref{the differential of psi^theta is injective}, we have
\begin{align*}
    \frac{\partial \psi^\theta_{p,v_L}}{\partial \tau_{v_l \rightarrow v_{l+1}}}(\thF) = 0,
\end{align*}
and thus 
\begin{align}\label{proof expression of Gamma case 3}
   \alpha_p(x^i, \theta) \frac{\partial \psi^\theta_{p,v_L}}{\partial \tau_{v_l \rightarrow v_{l+1}}}(\thF) = 0.
\end{align}

Assembling \eqref{proof expression of Gamma case 1}, \eqref{proof expression of Gamma case 2} and \eqref{proof expression of Gamma case 3}, we can sum over all $p \in \mathcal{P}$, and obtain
\begin{multline*}
    \gamma^{i, v_L}_{v_{l+1} \rightarrow v_{l}} = \sum_{\substack{p \in \mathcal{P}_0 \\ p = (v_0, \dots, v_{L-1})}}  \One_{\{v_{l} \rightarrow v_{l+1} \in p\}} \ x^i_{v_0} \prod_{k=1}^{L-1}a_{v_k}(x^i, \theta)  \prod_{\substack{k \in \lb 0 , L-1 \rb \\ k \neq l }} w_{v_k \rightarrow v_{k+1}} \\ +\sum_{l'=1}^L \sum_{\substack{p \in \mathcal{P}_{l'} \\ p = (v_{l'}, \dots, v_{L-1})}} \One_{\{v_{l} \rightarrow v_{l+1} \in p\}} \ b_{v_{l'}} \prod_{k=l'}^{L-1}a_{v_k}(x^i, \theta) \prod_{\substack{k \in \lb l' , L-1 \rb \\ k \neq l }} w_{v_k \rightarrow v_{k+1}}
\end{multline*}
which can be reformulated, getting rid of the zero sums when $v_{l} \rightarrow v_{l+1} \not\in p$, as
\begin{multline*}
    \gamma^{i, v_L}_{v_{l+1} \rightarrow v_{l}} = \sum_{\substack{v_{0} \in V_{0} \\ \vdots \\ v_{l-1} \in V_{l-1}\\ v_{l+2} \in V_{l+2}\\  \vdots \\ v_{L-1} \in V_{L-1}}} x^i_{v_0} \overline{w}_{v_0 \rightarrow v_1} a_{v_{l}}(x^i,\theta) \prod_{\substack{k \in \lb 1 , L-1 \rb \\ k \neq l}} a_{v_k}(x^i,\theta) w_{v_k \rightarrow v_{k+1}}  \\ 
    + \sum_{l'=1}^L \sum_{\substack{v_{l'} \in V_{l'} \\ \vdots \\ v_{l-1} \in V_{l-1}\\ v_{l+2} \in V_{l+2}\\  \vdots \\ v_{L-1} \in V_{L-1}}} a_{v_{l}}(x^i,\theta)  b_{v_l'} \prod_{\substack{k \in \lb l', L-1 \rb \\ k \neq l}} a_{v_k}(x^i, \theta) w_{v_k \rightarrow v_{k+1}},
\end{multline*}
which shows \eqref{expression of gamma_(v->v')}.

The proof of \eqref{expression of gamma_v} is similar to the one of \eqref{expression of gamma_(v->v')}.
\end{proof}

\end{document}